\documentclass[11pt,final]{article}
\usepackage{a4}
\usepackage{amsmath}%
\usepackage{amsthm}%
\usepackage{amstext}%
\usepackage{amssymb}%
\usepackage{showkeys}%
\usepackage{epsfig}%
\usepackage{cite}
\usepackage{tikz}
\usepackage{subfig}
\usepackage{caption}
\usepackage{multirow}
\usepackage{booktabs}
\usepackage{floatrow}
\usepackage{comment}
\usepackage{listings}

\setcounter{MaxMatrixCols}{10}

\newtheorem{theorem}{Theorem}

\newtheorem{axiom}{Axiom}

\newtheorem{definition}[axiom]{Definition}

\newenvironment{example}{\exm\rm}{\endexm}

\newtheorem{lemma}[theorem]{Lemma}

\newenvironment{notation}{\nt\rm}{\endnt}

\newtheorem{proposition}[theorem]{Proposition}

\newenvironment{remark}{\rem\rm}{\endrem}

%\newenvironment{proof}[1][Proof]{\noindent\textbf{#1.} }{\ \rule{0.5em}{0.5em}}

% \newcounter{unnumber}
% \renewcommand{\theunnumber}{}
% \newtheorem{prf}[unnumber]{Proof.}
% \newenvironment{proof}{\prf\rm}{\hfill{$\blacksquare$}\endprf}
%
\newcommand{\R}{\mathbb{R}}%
\newcommand{\e}{\varepsilon}%
\newcommand{\ol}{\overline}%

\newcommand{\n}{{\nabla}}
\newcommand{\p}{{\partial}}

\newcommand{\To}{\longrightarrow}
\def\a{\alpha}

\def\e{\epsilon}

\def\g{\gamma}
\def\G{\Gamma}

\def\l{\lambda}
\def\<{\langle}
\def\>{\rangle}
\DeclareMathOperator*\sqri{sqri}%
\DeclareMathOperator*\dom{dom}%
\DeclareMathOperator*\oR{\overline{\R}}%
\DeclareMathOperator*\gr{Gr}%
\DeclareMathOperator*\id{Id}%
\DeclareMathOperator*\prox{prox}%
\DeclareMathOperator*\argmin{argmin}

\DeclareMathOperator*\proj{proj}
\oddsidemargin-0.4cm
\topmargin-1cm
\textwidth16.6cm \textheight23.5cm
\title{A primal-dual dynamical approach to structured convex minimization problems}

\author{Radu Ioan Bo\c{t} \thanks{University of Vienna, Faculty of Mathematics, Oskar-Morgenstern-Platz 1, A-1090 Vienna, Austria,
email: radu.bot@univie.ac.at. Research partially supported by FWF (Austrian Science Fund), project I 2419-N32.
} \and
Ern\"{o} Robert Csetnek \thanks {University of Vienna, Faculty of Mathematics, Oskar-Morgenstern-Platz 1, A-1090 Vienna, Austria,
email: ernoe.robert.csetnek@univie.ac.at. Research supported by FWF (Austrian Science Fund), project P 29809-N32.
}
 \and Szil\'{a}rd Csaba L\'{a}szl\'{o} \thanks{Technical University of Cluj-Napoca, Department of Mathematics, Memorandumului 28, Cluj-Napoca, Romania, email: szilard.laszlo@math.utcluj.ro.
 This work was supported by a grant of Ministry of Research and Innovation, CNCS - UEFISCDI, project number PN-III-P1-1.1-TE-2016-0266, within PNCDI III.}}

\begin{document}
\maketitle
\noindent \textbf{Abstract.} In this paper we propose a primal-dual dynamical approach to the minimization of a structured convex function consisting of a smooth term, a nonsmooth term, and the composition of another nonsmooth term with a linear continuous operator. 
In this scope we introduce a dynamical system for which we prove that its trajectories asymptotically converge to a saddle point of the Lagrangian of the underlying convex minimization problem as time tends to infinity. 
In addition, we provide rates for both the violation of the feasibility condition by the ergodic trajectories and the convergence of the objective function along  these ergodic trajectories to its minimal value. Explicit time discretization of the dynamical system results in a numerical algorithm which is a combination of the linearized proximal method of multipliers and the proximal ADMM algorithm.
\vspace{1ex}

\noindent \textbf{Keywords.} structured convex minimization, dynamical system, proximal ADMM algorithm, primal-dual algorithm \vspace{1ex}

\noindent \textbf{AMS Subject Classification.}  37N40, 49N15, 90C25, 90C46

\section{Introduction and preliminaries}\label{sec1}

For ${\mathcal H}$ and ${\mathcal G}$ real Hilbert spaces, we consider the convex minimization problem
\begin{equation}\label{primal}
\inf_{x\in{\mathcal H}}f(x)+h(x)+g(Ax),
\end{equation}
where $f:{\mathcal H}\To\oR =\R \cup \{\pm \infty\}$ and $g:{\mathcal G}\To\oR$ are proper, convex and lower semicontinuous functions, $h:\mathcal{H}\To\R$ is a convex and Fr\'echet differentiable function with $L_h$-Lipschitz continuous gradient $(L_h \geq 0)$, 
i.e. $\|\n h(x)-\n h(y)\|\le L_h\|x-y\|$ for every $x,y\in \mathcal{H}$,  and $A:{\mathcal H}\To{\mathcal G}$ is a continuous linear operator.

Problem \eqref{primal} can be rewritten as
\begin{equation}\label{primaleq}
\inf_{\stackrel{(x,z)\in{\mathcal H}\times{\mathcal G}}{Ax-z=0}}f(x)+h(x)+g(z).
\end{equation}
Obviously, $x^*\in{\mathcal H}$ is an optimal solution of \eqref{primal} if and only if $(x^*,z^*)\in{\mathcal H}\times{\mathcal G}$ is an optimal solution of \eqref{primaleq} and $Ax^*=z^*$.

Based on this reformulation of problem \eqref{primal} we define its Lagrangian
$$l:{\mathcal H}\times{\mathcal G}\times{\mathcal G}\To\oR,\,l(x,z,y)=f(x)+h(x)+g(z)+\<y,Ax-z\>.$$
An element $(x^*,z^*,y^*)\in {\mathcal H}\times{\mathcal G}\times{\mathcal G}$ is said to be a saddle point of the Lagrangian $l$, if
$$l(x^*,z^*,y)\le l(x^*,z^*,y^*)\le l(x,z,y^*),\,\forall (x,z,y)\in {\mathcal H}\times{\mathcal G}\times{\mathcal G}.$$
It is known that $(x^*,z^*,y^*)\in {\mathcal H}\times{\mathcal G}\times{\mathcal G}$ is a saddle point of  $l$ if and only if  $x^*$ is an optimal solution of \eqref{primal}, $Ax^*=z^*$, and $y^*$ is an optimal solution of the Fenchel dual to problem \eqref{primal}, which reads
\begin{equation}\label{dual}
\sup_{y\in{\mathcal G}} \left (-(f^*\square h^*)(-A^*y)-g^*(y)\right).
\end{equation}
In this situation the optimal objective values of \eqref{primal} and \eqref{dual} coincide.

In the formulation of \eqref{dual}, 
$$f^* : {\cal H} \rightarrow \oR, \ f^*(u)=\sup_{x\in{\mathcal H}}(\<u,x\>-f(x)), \ \ h^* : {\cal H} \rightarrow \oR, \ h^*(u)=\sup_{x\in{\mathcal H}}(\<u,x\>-h(x)),$$
and
$$g^* : {\cal G} \rightarrow \oR, \ g^*(y)=\sup_{z\in{\mathcal G}}(\<y,z\>-g(z)),$$
denote the conjugate functions of $f, h$ and $g$, respectively, and $A^* : {\cal G} \rightarrow {\cal H}$ denotes the adjoint operator of $A$. The infimal convolution $f^*\square h^*:\mathcal{H}\to\oR$ of the functions $f^*$ and $h^*$ is defined by
$$(f^*\square h^*)(x)=\inf_{y\in\mathcal{H}}(f^*(y)+h^*(x-y)).$$

It is also known that $(x^*,z^*,y^*)\in {\mathcal H}\times{\mathcal G}\times{\mathcal G}$ is a saddle point of the Lagrangian $l$ if and only if it is a solution of the following system of primal-dual optimality conditions
$$\left \{ \begin{array}{l}
0 \in \partial f(x) + \nabla h(x) + A^*y\\
Ax = z, Ax \in \partial g^*(y).
\end{array} \right. $$
We recall that the convex subdifferential of the function $f :{\cal H} \rightarrow \oR$ at $x \in {\cal H}$ is defined by
$\partial f(x) = \{u \in {\cal H}: f(x') - f(x) \geq \langle u, x'-x \rangle \ \forall x' \in {\cal H}\} $, for $f(x) \in \R$, and by $\partial f(x) = \emptyset$, otherwise.

A saddle point of the Lagrangian $l$ exists whenever the primal problem \eqref{primal} has an optimal solution and the so-called Attouch-Br\'{e}zis regularity
condition
$$0\in\sqri(\dom g-A(\dom f))$$ holds. Here,
$$\sqri Q:=\{x\in Q:\cup_{\l>0}\l(Q-x)\mbox{ is a closed linear subspace of }\mathcal{G}\}$$
denotes the strong quasi-relative interior of a set $Q\subseteq\mathcal{G}$. We refer the reader
to \cite{bauschke-book, b-hab, Zal-carte} for more insights into the world of regularity conditions and convex duality theory.

Let $S_+(\mathcal{H})$ denote the family of continuous linear operators $U:\mathcal{H}\To\mathcal{H}$ which are self-adjoint and positive semidefinite. For $U\in S_+(\mathcal{H})$ we introduce the following seminorm on ${\cal H}$:
 $$\|x\|^2_U=\<x,Ux\> \ \forall x\in\mathcal{H}.$$
This introduces on $S_+(\mathcal{H})$ the following partial ordering:
for $U_1,U_2\in S_+(\mathcal{H})$
$$U_1\succcurlyeq U_2\Leftrightarrow \|x\|^2_{U_1}\ge\|x\|^2_{U_2} \ \forall x\in\mathcal{H}.$$
For $\a>0$ fixed, let be
$$P_\a(\mathcal{H})=\{U\in S_+(\mathcal{H}): U\succcurlyeq \a I\},$$
where $I:\mathcal{H}\To\mathcal{H},\,I(x)=x,$ denotes the identity operator on ${\cal H}$.

The subject of our investigations in this paper will be the following dynamical system, for which we will show that it asymptotically approaches the set of solutions of the primal-dual pair of optimization problems \eqref{primal}-\eqref{dual} 
\begin{equation}\label{ADMMdysy-subdiff}
\left\{
\begin{array}{llll}
\dot{x}(t)+x(t)\in\left(\partial f +cA^*A+M_1(t)\right)^{-1}\left(M_1(t)x(t)+cA^*z(t)-A^*y(t)-\n h(x(t))\right)\\
\\
\dot{z}(t)+z(t)\in \left(\partial g +cI+M_2(t)\right)^{-1}\left(M_2(t)z(t)+cA(\gamma\dot x(t)+x(t))+y(t)\right)\\
\\
\dot{y}(t)=c A(x(t)+\dot{x}(t))-c (z(t)+\dot{z}(t))\\
\\
x(0)=x^0\in{\mathcal H},\,z(0)=z^0\in{\mathcal G},\,y(0)=y^0 \in{\mathcal G},
\end{array}
\right.
\end{equation}
where $c >0$, $\gamma \in [0,1]$, and $M_1:[0,+\infty)\To S_+(\mathcal{H})$ and $M_2:[0,+\infty)\To S_+(\mathcal{G})$ are such that
$$(Cstrong) \quad \text{there exists} \ \a>0 \ \text{such that} \ cA^*A+M_1(t)\in P_{\a}(\mathcal{H}) \quad \forall t\in[0,+\infty).$$

One of the motivation for the study of this dynamical system comes from the fact that, as we will see in Remark \ref{rem-discretization}, it provides through explicit time discretization a numerical algorithm which is a combination of the linearized proximal method of multipliers and the proximal ADMM algorithm.

In the next section we will show the existence and uniqueness of strong global solutions for the dynamical system \eqref{ADMMdysy-subdiff} in the framework of the Cauchy-Lipschitz Theorem. In Section \ref{sectech} we will prove some technical results, which will play an important role in the asymptotic analyis.
In Section \ref{sec3} we will investigate the asymptotic behaviour of the trajectories as the time tends to infinity. By carrying out a Lyapunov analysis and by relying on the continuous variant of the Opial Lemma, we are able to prove that the trajectories generated by \eqref{ADMMdysy-subdiff}  
asymptotically convergence to a saddle point of the Lagrangian $l$. Furthermore, we provide convergence rates of ${\cal O}(\frac{1}{t})$ for the violation of the feasibility condition by ergodic trajectories and the convergence of the objective function along  these ergodic trajectories to its minimal value. 

The approach of optimization problems by dynamical systems has a long tradition. Crandall and Pazy considered in  \cite{crandall-pazy1969} dynamical systems governed by subdifferential operators (and more general by maximally monotone operators) in Hilbert spaces, 
addressed questions like the existence and uniqueness of solution trajectories, and 
related the latter to the theory of semi-groups of nonlinear contractions. Br\'{e}zis \cite{brezis} studied the asymptotic behaviour of the trajectories for dynamical systems governed by convex subdifferentials, and Bruck carried out in \cite{bruck} a similar analysis for maximally monotone operators. Dynamical systems defined via resolvent/proximal evaluations of the governing operators have enjoyed much attention in the last years, as they result by explicit time discretization in relaxed versions of standard numerical algorithms, with high flexibility and good numerical performances. Abbas and Attouch introduced in \cite{abbas-att-arx14} a forward-backward dynamical system, by extending to more general optimization problems an approach proposed by Antipin in \cite{antipin} and Bolte in \cite{bolte-2003} on a gradient-projected dynamical system associated to the minimization of a smooth convex function over a convex closed set. Implicit dynamical systems were considered also in \cite{b-c-dyn-KM} in the context of monotone inclusion problems. A  dynamical system of forward-backward-forward type was considered in \cite{bb-ts-cont}, while, a dynamical system of Douglas-Rachford type was recently introduced in  \cite{cmt}. 

It is important to notice that the approaches mentioned above have been introduced in connection with the study of ``simple" monotone inclusion and convex minimization problems. They rely on straightforward splitting strategies and cannot be efficiently used when addressing structured minimization problems, like \eqref{primal}, which need to be addressed from a primal and a dual perspective, thus, require for tools and techniques from the convex duality theory. The dynamical approach we introduce and investigate in this paper is, to our knowledge, the first meant to address structured convex minimization problems in the spirit of the full splitting paradigm.

\begin{remark}\label{rem-discretization}
The first inclusion in \eqref{ADMMdysy-subdiff} can be equivalently written as
\begin{equation}\label{1eq}0\in\p f(\dot{x}(t)+x(t))+c A^*A(\dot{x}(t)+x(t))+M_1(t)\dot{x}(t)-(c A^*z(t)-A^*y(t)-\n h(x(t))) \ \forall t \in [0,+\infty),\end{equation}
while the second one as
\begin{equation}\label{2eq}
0\in \p g(\dot{z}(t)+z(t))+c(\dot{z}(t)+z(t))-cA(\g\dot{x}(t)+x(t))-y(t)+M_2(t)\dot{z}(t) \ \forall t \in [0,+\infty).
\end{equation}

The explicit discretization of \eqref{1eq} with respect to the time variable $t$ and constant step equal to $1$ yields the iterative scheme
$$0\in \frac{1}{c}\p f(x^{k+1})+A^*Ax^{k+1}+\frac{M_1^k}{c}(x^{k+1}-x^k)-A^*z^k+\frac{A^*}{c}y^k+\frac{1}{c}\n h(x^k) \ \forall k \geq 0.$$
By convex subdifferential calculus, one can easily see that this can be for every $k \geq 0$ equivalently written as
$$0\in\p\left(f(x)+\<x-x^k,\n h(x^k)\>+\frac{c}{2}\left\|Ax-z^k+\frac{y^k}{c}\right\|^2+\frac12\|x-x^k\|^2_{M_1^k}\right)\bigg|_{x=x^{k+1}}$$
and, further, as
\begin{equation*}
x^{k+1}\in\argmin_{x\in\mathcal{H}}\left(f(x)+\<x-x^k,\n h(x^k)\>+\frac{c}{2}\left\|Ax-z^k+\frac{y^k}{c}\right\|^2+\frac12\|x-x^k\|^2_{M_1^k}\right).
\end{equation*}
Similarly, \eqref{2eq} leads for every $k \geq 0$ to
$$0 \in \p\left(g(z)+\frac{c}{2}\left\|A(\g x^{k+1}+(1-\g)x^k)-z+\frac{y^k}{c}\right\|^2+\frac12\|z-z^k\|^2_{M_2^k}\right)\bigg|_{z=z^{k+1}}, $$
which is nothing else than
\begin{equation*}
z^{k+1}\in\argmin_{z\in \mathcal{G}}\left(g(z)+\frac{c}{2}\left\|A(\g x^{k+1}+(1-\g)x^k)-z+\frac{y^k}{c}\right\|^2+\frac12\|z-z^k\|^2_{M_2^k}\right).
\end{equation*}
Here, $(M_1^k)_{k \geq 0}$ and $(M_2^k)_{k \geq 0}$ are two operator sequences in $S_+(\mathcal{H})$ and $S_+(\mathcal{G})$, respectively.

Thus the dynamical system \eqref{ADMMdysy-subdiff} leads through explicit time discretization to a numerical algorithm, which, for a starting point $(x^0, z^0, y^0) \in {\cal H} \times {\cal G} \times {\cal G}$, generates a sequence $(x^k, z^k, y^k)_{k \geq 0}$ for every $k \geq 0$ as follows
\begin{equation}\label{ADMMdiscrete}
\left\{
\begin{array}{llll}
x^{k+1}\in\argmin\limits_{x\in\mathcal{H}}\left(f(x)+\<x-x^k,\n h(x^k)\>+\frac{c}{2}\left\|Ax-z^k+\frac{y^k}{c}\right\|^2+\frac12\|x-x^k\|^2_{M_1^k}\right)\\
\\
z^{k+1}\in \argmin\limits_{z\in \mathcal{G}}\left(g(z)+\frac{c}{2}\left\|A(\g x^{k+1}+(1-\g)x^k)-z+\frac{y^k}{c}\right\|^2+\frac12\|z-z^k\|^2_{M_2^k}\right)\\
\\
y^{k+1}=y^k+c(Ax^{k+1}-z^{k+1}).
\end{array}
\right.
\end{equation}
The algorithm \eqref{ADMMdiscrete} is a combination of the linearized proximal method of multipliers and the proximal ADMM algorithm. 

Indeed, in the case when $\gamma =1$, \eqref{ADMMdiscrete} becomes the proximal ADMM algorithm with variable metrics from \cite{Ban-Bot-Csetnek} (see, also, \cite{b-c-acm}). If, in addition, $h=0$ and the operator sequences $(M_1^k)_{k \geq 0}$ and $(M_2^k)_{k \geq 0}$ are constant, then \eqref{ADMMdiscrete} becomes the proximal ADMM algorithm investigated in \cite[Section 3.2]{s-teb} (see, also, \cite{fpst}). It is known that the proximal ADMM algorithm can be seen as a generalization of the full splitting primal-dual algorithms of Chambolle-Pock (see \cite{ch-pck}) and Condat-Vu (see \cite{condat2013, vu}).

On the other hand, in the case when $\gamma =0$, \eqref{ADMMdiscrete} becomes an extension of the  linearized proximal method of multipliers of Chen-Teboulle (see \cite{chen-teb}, \cite[Algorithm 1]{s-teb}).
\end{remark}

In the following remark we provide a particular choice for the maps $M_1$ and $M_2$, which transforms \eqref{ADMMdysy-subdiff} into a dynamical system of primal-dual type formulated in the spirit of the full splitting paradigm.

\begin{remark}\label{rem-cont-primal-dual}
For every $t \in [0,+\infty)$, define $$M_1(t)=\frac{1}{\tau(t)}I-cA^*A \ \mbox{and} \ M_2(t)=0,$$
where $\tau(t)>0$ is such that $c\tau(t)\|A\|^2\le1$. 

Let $t \in [0, +\infty)$ be fixed. In this particular setting, \eqref{1eq} is equivalent to
$$\left(\frac{1}{\tau(t)}I-c A^*A\right)x(t)+c A^*z(t)-A^*y(t)-\n h(x(t))\in\frac{1}{\tau(t)}\dot{x}(t)+\frac{1}{\tau(t)}{x}(t)+\p f(\dot{x}(t)+x(t))$$
and further to
$$\dot{x}(t)+x(t)=(I+\tau(t)\p f)^{-1}\left((I-c\tau(t)A^*A)x(t)+c\tau(t) A^*z(t)-\tau(t)A^*y(t)-\tau(t)\n h(x(t))\right).$$
In other words,
\begin{equation*}
\dot{x}(t)+x(t)=\prox\nolimits_{\tau(t)f}\big((I-c\tau(t)A^*A)x(t)+c\tau(t) A^*z(t)-\tau(t)A^*y(t)-\tau(t)\n h(x(t))\big),
\end{equation*}
where
\begin{equation*}\label{intr-prox-def}  \prox\nolimits_{\kappa} : {\mathcal H} \rightarrow {\mathcal H}, \quad \prox\nolimits_{\kappa}(x)=
\argmin_{y\in {\mathcal H}}\left \{\kappa(y)+\frac{1}{2}\|x-y\|^2\right\}=(I+\partial \kappa)^{-1}(x),
\end{equation*}
denotes the proximal point operator of a proper, convex  and lower semicontinuous function $\kappa: {\cal H}\rightarrow \oR$.

On the other hand, relation \eqref{2eq} is equivalent to
$$\dot{y}(t)+y(t)+c(\g-1)A\dot{x}(t)\in\p g(\dot{z}(t)+z(t)),$$
hence,
$$\dot{z}(t)+z(t)\in\p g^*(\dot{y}(t)+y(t)+c(\g-1)A\dot{x}(t)).$$
This is further equivalent to
$$A(\g\dot{x}(t)+x(t))+\frac{1}{c}y(t)\in \frac{1}{c}\dot{y}(t)+\frac{1}{c}y(t)+(\g-1)A\dot{x}(t)+\p g^*(\dot{y}(t)+y(t)+c(\g-1)A\dot{x}(t))$$
and further to
$$\dot{y}(t)+y(t)+c(\g-1)A\dot{x}(t)=(I+c\p g^*)^{-1}(cA(\g\dot{x}(t)+x(t))+y(t)).$$
In other words,
\begin{equation*}
\dot{y}(t)+y(t)+c(\g-1)A\dot{x}(t)=\prox\nolimits_{cg^*}\big(cA(\g\dot{x}(t)+x(t))+y(t)\big).
\end{equation*}
Consequently, in this particular setting, the dynamical system \eqref{ADMMdysy-subdiff} can be equivalently written as
\begin{equation}\label{ADMMdysyM1}
\left\{
\begin{array}{llll}
\dot{x}(t)+x(t)=\prox\nolimits_{\tau(t)f}\big((I-c\tau(t)A^*A)x(t)+c\tau(t) A^*z(t)-\tau(t)A^*y(t)-\tau(t)\n h(x(t))\big)\\
\\
\dot{y}(t)+y(t)+c(\g-1)A\dot{x}(t)=\prox\nolimits_{cg^*}\big(cA(\g\dot{x}(t)+x(t))+y(t)\big)\\
\\
\dot{y}(t)=c A(x(t)+\dot{x}(t))-c (z(t)+\dot{z}(t))\\
\\
x(0)=x^0\in{\mathcal H},\,z(0)=z^0\in{\mathcal G},\,y(0)=y^0 \in{\mathcal G}.
\end{array}
\right.
\end{equation}
Let us also mention that when $h=0$ and $\gamma =1$ the dynamical system \eqref{ADMMdysyM1} reads
\begin{equation}\label{ADMMdysyM1-equiv}
\left\{
\begin{array}{llll}
\dot{x}(t)+x(t)=\prox\nolimits_{\tau(t)f}\left(x(t)-\tau(t)A^*(y(t)+cAx(t)-cz(t))\right)\\
\\
\dot{y}(t)+y(t)=\prox\nolimits_{cg^*}(y(t)+cA(\dot{x}(t)+x(t)))\\
\\
\dot{y}(t)=c A(x(t)+\dot{x}(t))-c (z(t)+\dot{z}(t))\\
\\
x(0)=x^0\in{\mathcal H},\,z(0)=z^0\in{\mathcal G},\,y(0)=y^0\in{\mathcal G}.
\end{array}
\right.
\end{equation}
The explicit time discretization of \eqref{ADMMdysyM1-equiv} leads to a numerical algorithm, which, for a starting point $(x^0, z^0, y^0) \in {\cal H} \times {\cal G} \times {\cal G}$, generates the sequence $(x^k, z^k, y^k)_{k \geq 0}$ for every $k \geq 0$ as follows
\begin{equation}\label{ADMMdysyM1-discr}
\left\{
\begin{array}{llll}
x^{k+1}=\prox\nolimits_{\tau_k f}\left(x^k-\tau_kA^*(y^k+cAx^k-cz^k)\right)\\
\\
y^{k+1}=\prox\nolimits_{cg^*}(y^k+cAx^{k+1})\\
\\
y^{k+1}=y^k+c (Ax^{k+1}-z^{k+1}).
\end{array}
\right.
\end{equation}
By substituting in the first equation of  \eqref{ADMMdysyM1-discr} the term $cAx^k-cz^k$ by $y^k-y^{k-1}$, which is allowed according to the last equation, one can easily see that \eqref{ADMMdysyM1-discr} is equivalent to the following numerical algorithm, which, 
for a starting point $(x^0, y^0, y^{-1}) \in {\cal H} \times {\cal G} \times {\cal G}, y^0 = y^{-1}$, generates the sequence $(x^k, y^k)_{k \geq 0}$ for every $k \geq 0$ as follows
\begin{equation}\label{ADMMdysyM1-discr1}
\left\{
\begin{array}{llll}
x^{k+1}=\prox\nolimits_{\tau_k f}\left(x^k-\tau_kA^*(2y^k-y^{k-1})\right)\\
\\
y^{k+1}=\prox\nolimits_{cg^*}(y^k+cAx^{k+1}).
\end{array}
\right.
\end{equation}
For $\tau_k=\tau >0$ for every $k\geq 0$, \eqref{ADMMdysyM1-discr1} is nothing else than the primal-dual algorithm proposed by Chambolle and Pock in \cite{ch-pck}.
\end{remark}

\begin{remark}\label{variableM}
The maps $M_1:[0,+\infty)\To S_+(\mathcal{H})$ and $M_2:[0,+\infty)\To S_+(\mathcal{G})$ can be seen as inducing a variable renorming of the  underlying Hilbert space and, as seen in the remark above, allow the use of variable step sizes. In addition, they might provide favourable settings for the derivation of convergence rates for function values along the trajectories, as it is the case for the discrete time counterpart of \eqref{ADMMdysy-subdiff} (see \cite[Section 3]{b-c-acm}).
\end{remark}

\begin{figure}[tb]
	\centering
	\captionsetup[subfigure]{position=top}
	%\subfloat[...]
	{\includegraphics*[viewport=80 190  550 600,width=0.32\textwidth]{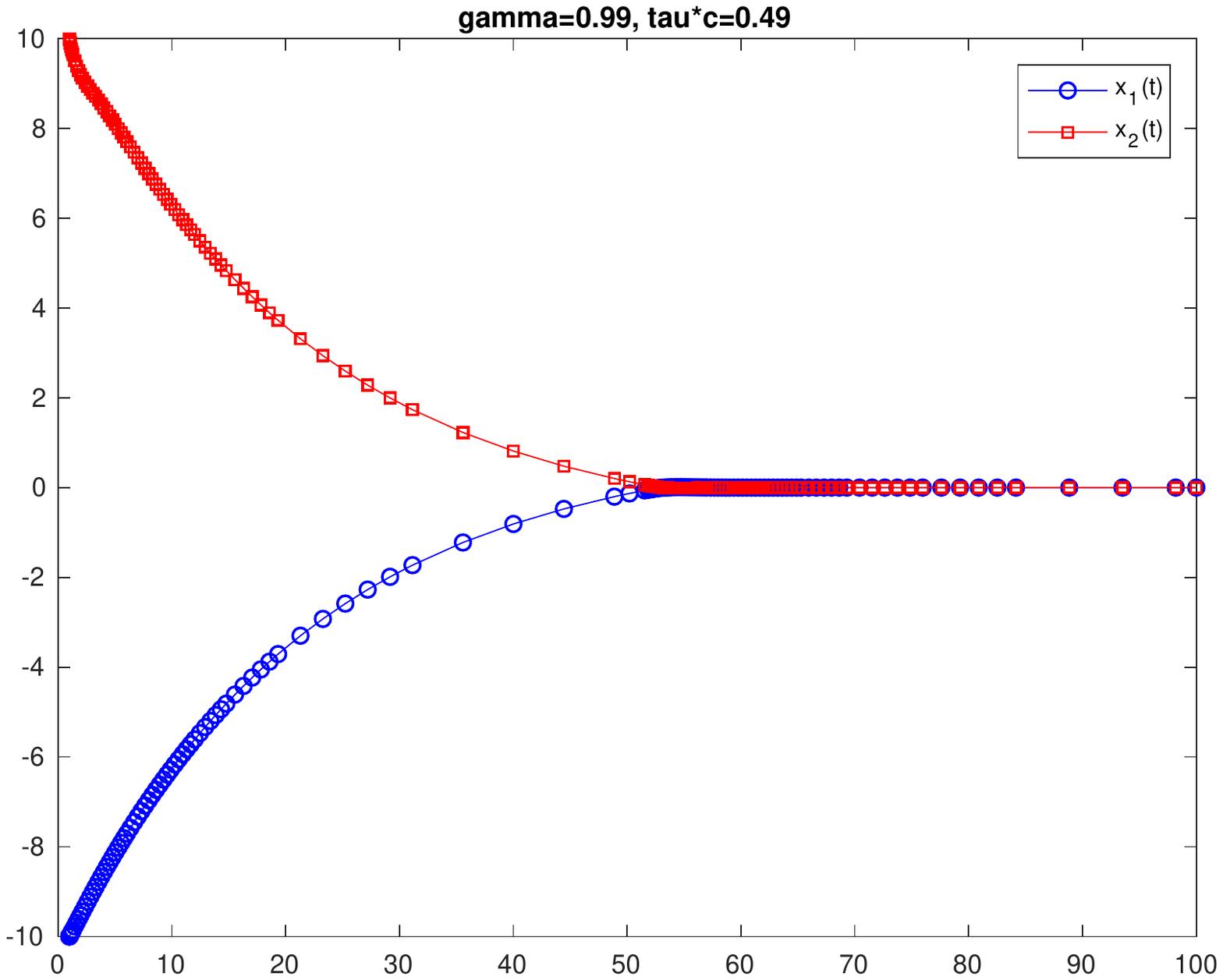}} \hspace{0.1cm}
	%\subfloat[...]
	{\includegraphics*[viewport=80 190  550 600,width=0.32\textwidth]{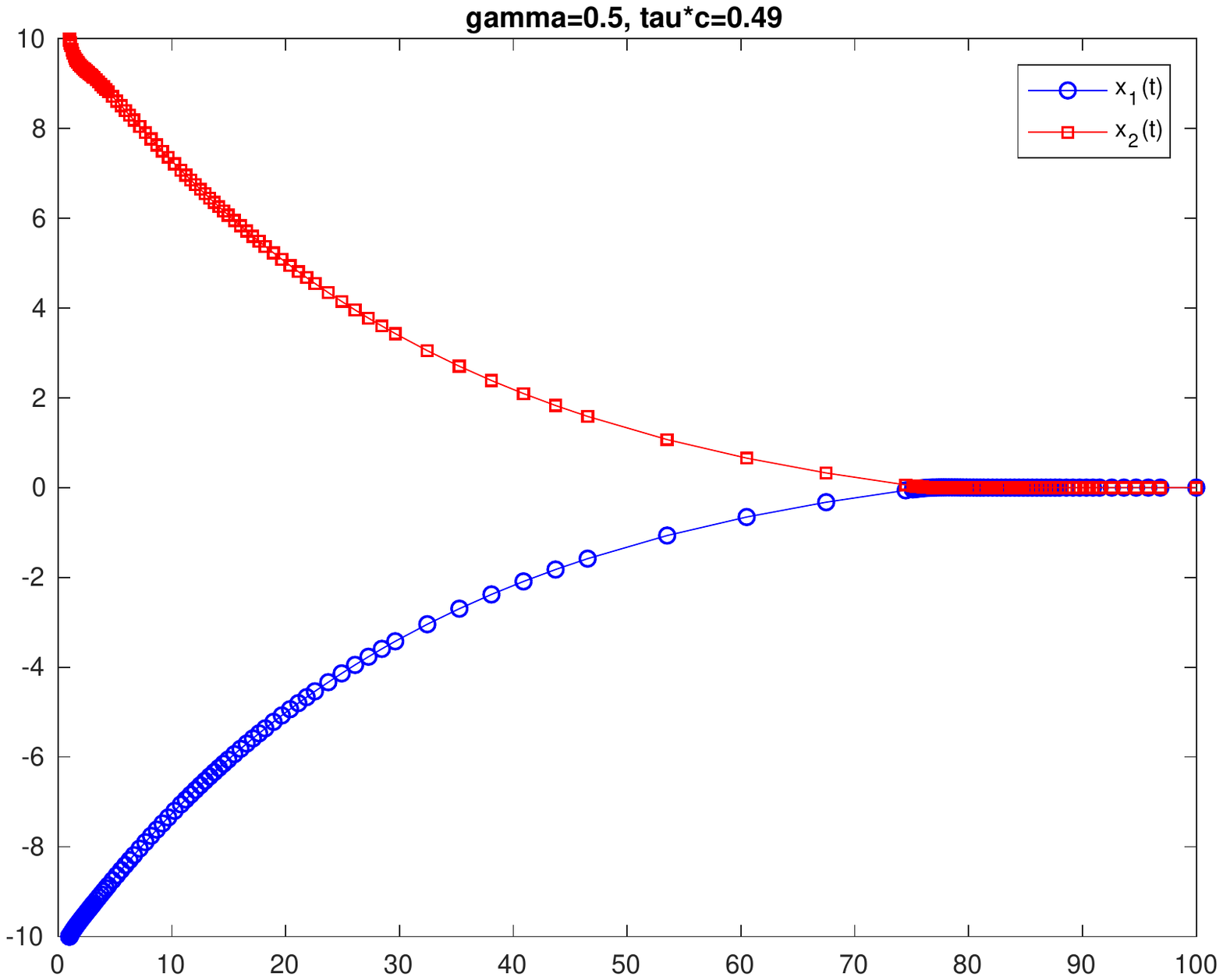}}\hspace{0.1cm}
	%\subfloat[...]
	{\includegraphics*[viewport=80 190  550 600,width=0.32\textwidth]{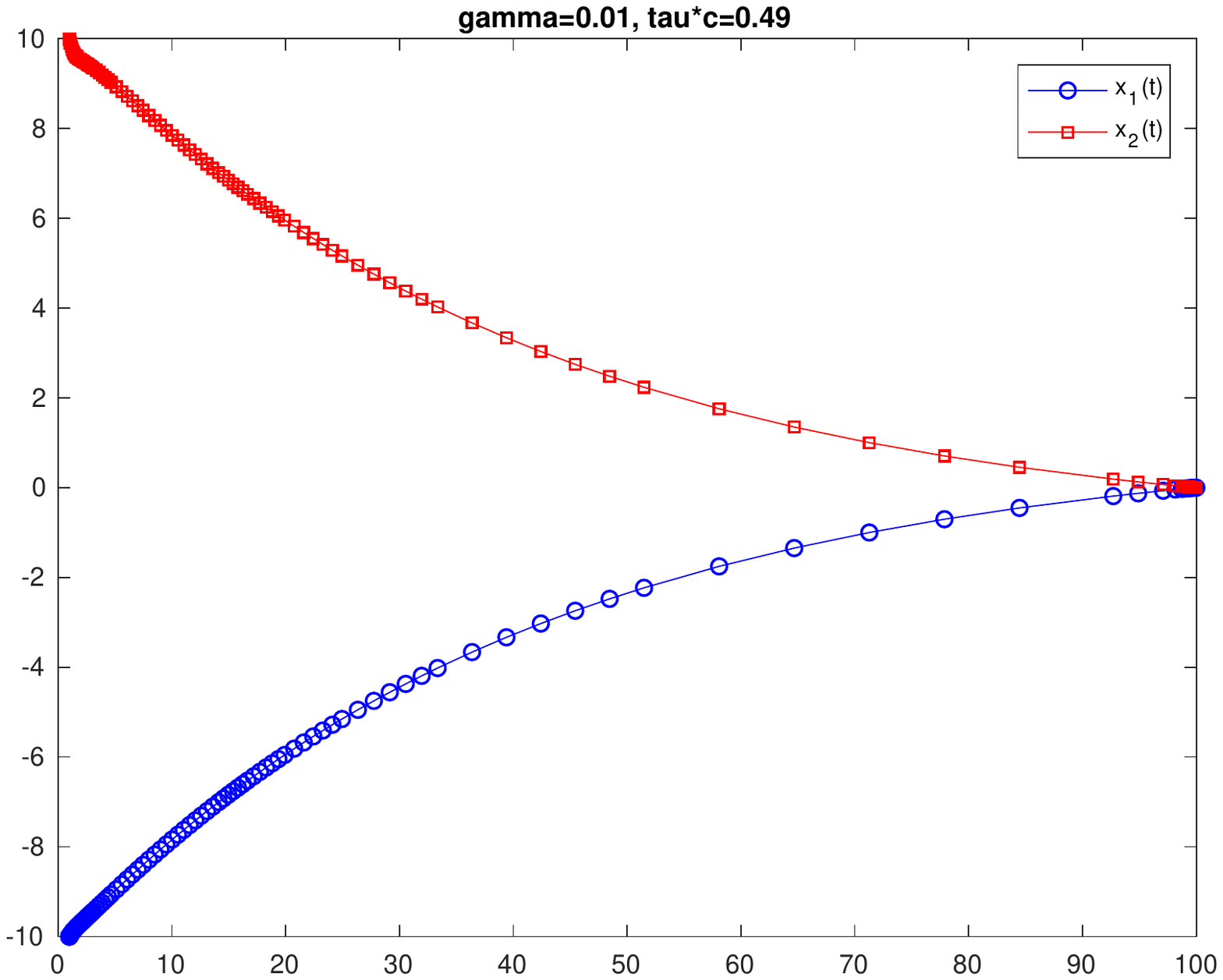}}\\
	{\includegraphics*[viewport=80 190  550 600,width=0.32\textwidth]{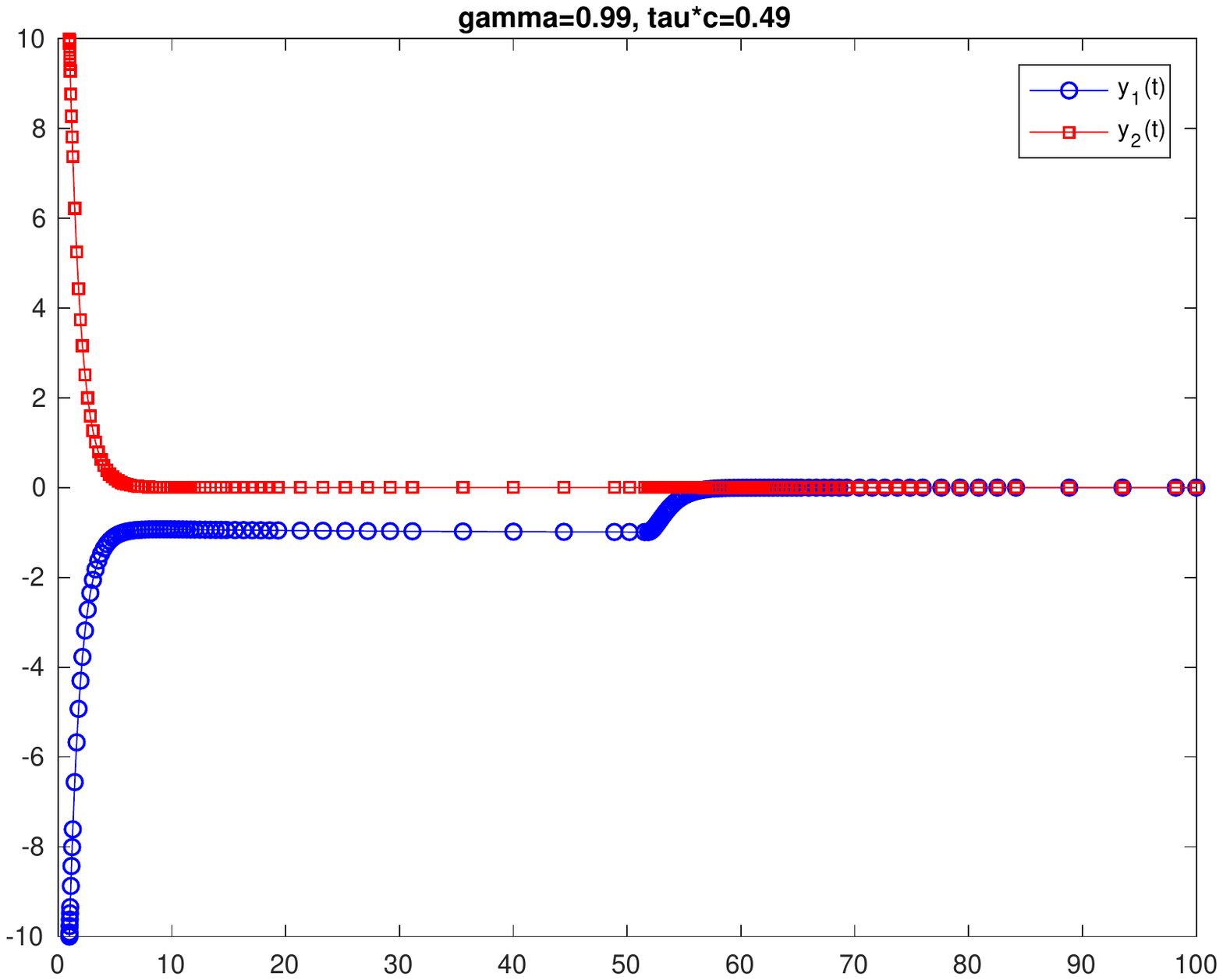}} \hspace{0.1cm}
	%\subfloat[...]
	{\includegraphics*[viewport=80 190  550 600,width=0.32\textwidth]{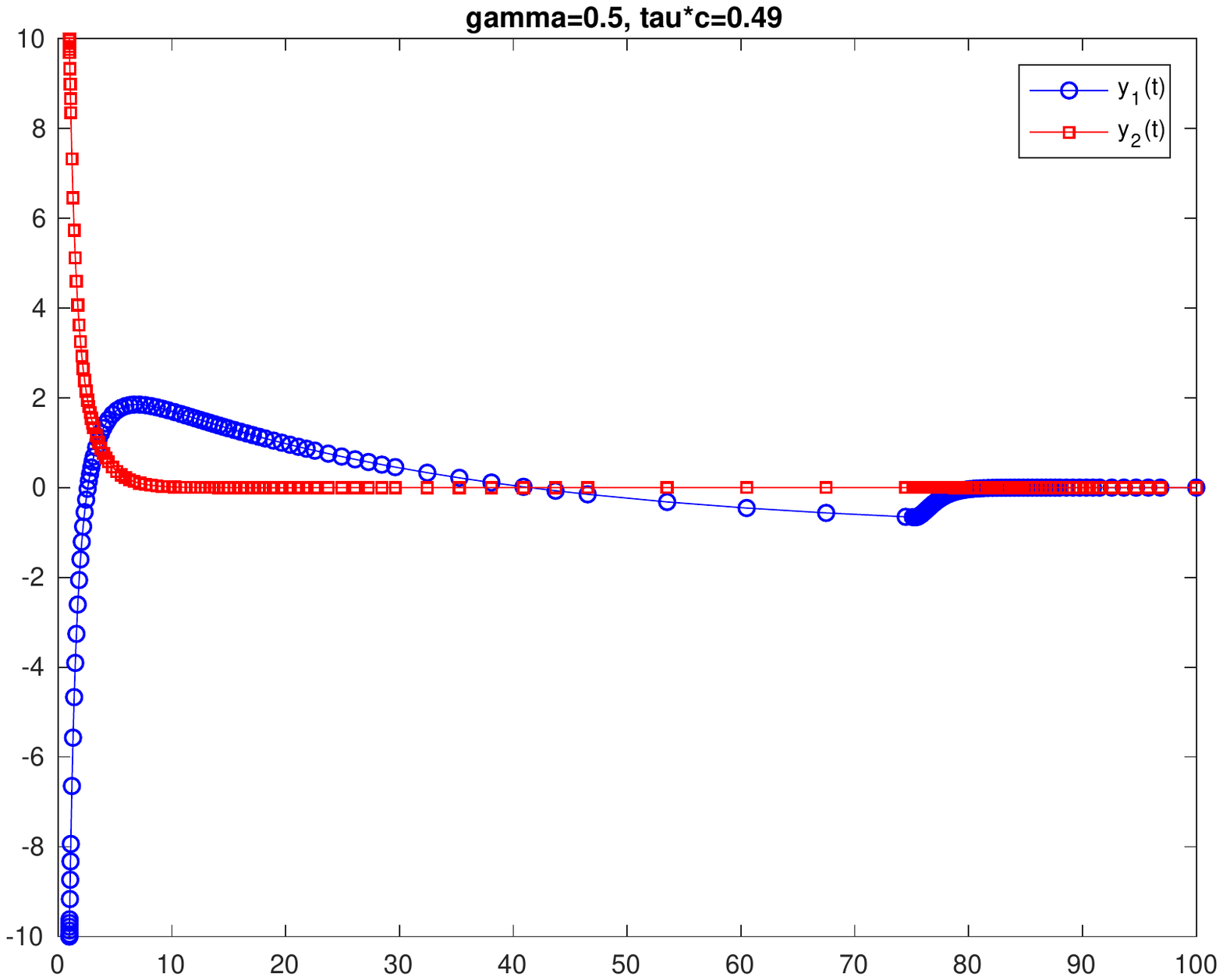}}\hspace{0.1cm}
	%\subfloat[...]
	{\includegraphics*[viewport=80 190  550 600,width=0.32\textwidth]{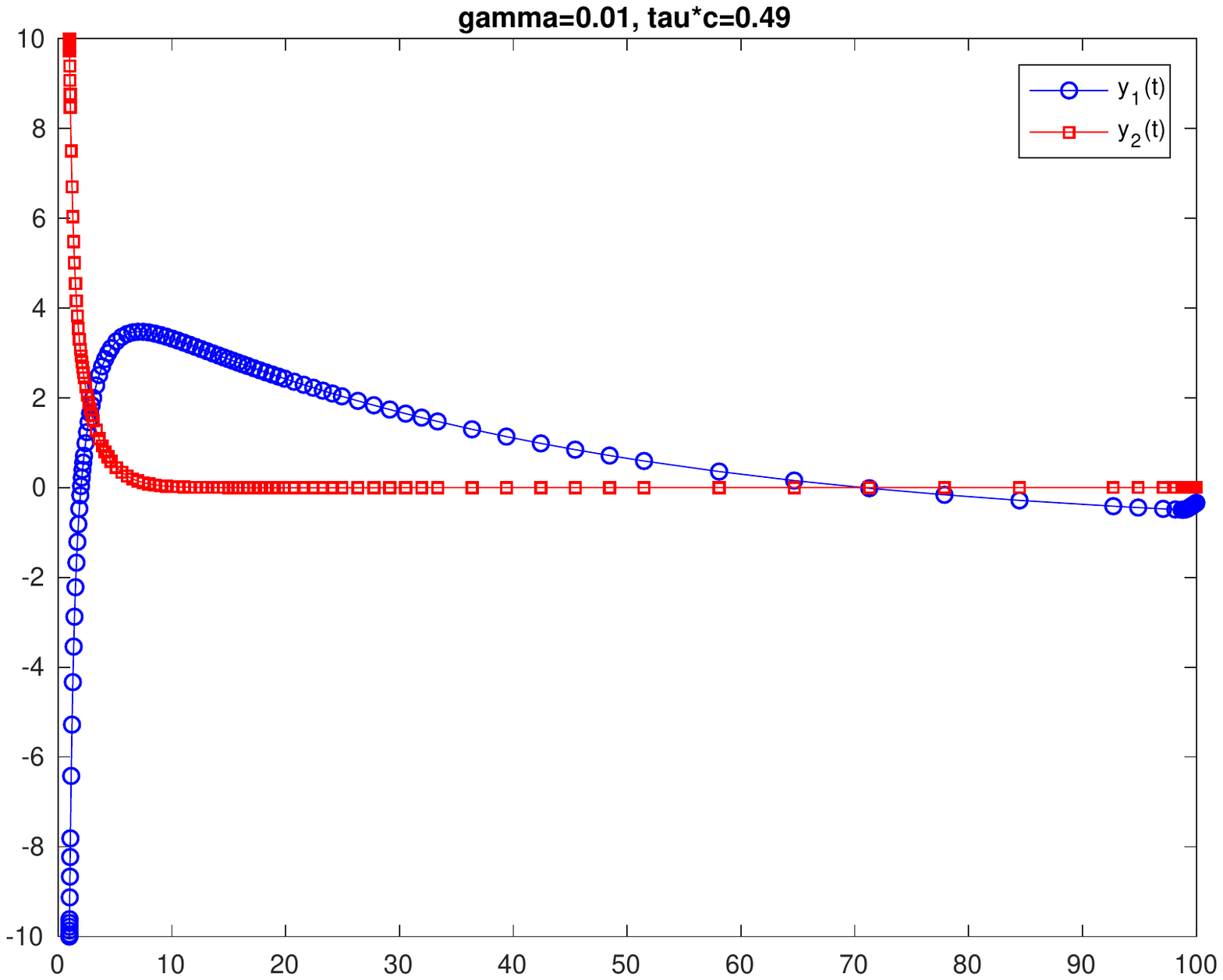}}
	\caption{\small First row: the primal trajectory $x(t)$ approaching the primal optimal solution $(0,0)$ for $\tau c = 0.49$ and starting point $x^0=(-10,10)$. Second row: the dual trajectory $y(t)$ approaching a dual optimal solution for $\tau c = 0.49$ and starting point $y^0=(-10,10)$.}
	\label{fig:ex1}	
\end{figure}

\begin{example} \label{example1} 
We will illustrate the way in which the parameters $\gamma, c$ and $\tau(t), t \in [0,+\infty)$ may influence the asymptotic convergence
of the primal and dual trajectories via numerical experiments. In this scope, we considered the following primal optimization problem
\begin{equation}\label{primalex}
\inf_{(x_1,x_2) \in \R^2} \frac{1}{2}(x_1^2 + x_2^2) + |x_1-x_2| + |x_1+x_2|,
\end{equation}
which is in fact problem \eqref{primal} written in the following particular setting:  ${\mathcal H}={\mathcal G}=\R^2$, $f,g, h:\R^2\to \R$, $f(x)= \frac{1}{2}\|x\|_2^2$,
$g(x)=\|x\|_1$, $h(x)=0$, for every $x \in \R^2$, and $A:\R^2\to\R^2$, $A(x_1,x_2)=(x_1-x_2,x_1+x_2)$. Then $\ol x=(0,0)$ is the unique optimal solution of
\eqref{primalex} and that 
\begin{equation}\label{dualex}
\sup_{\|(y_1,y_2)\|_\infty \leq 1} - y_1^2 - y_2^2
\end{equation}
is the Fenchel dual problem of \eqref{primalex}. Thus $\ol y = (0,0)$ is the unique dual optimal solution.

\begin{figure}[tb]
	\centering
	\captionsetup[subfigure]{position=top}
	%\subfloat[...]
	{\includegraphics*[viewport=80 190  550 600,width=0.32\textwidth]{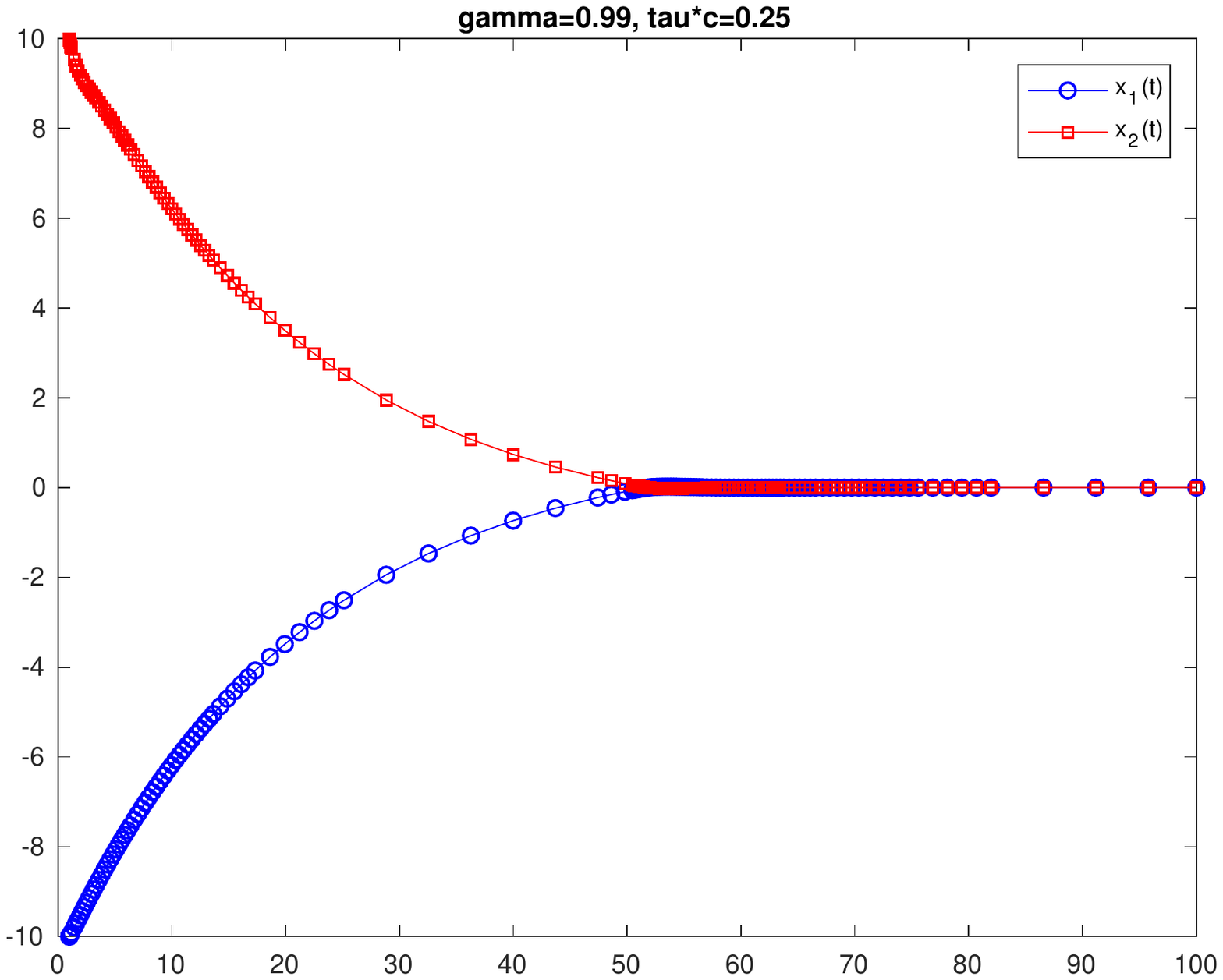}} \hspace{0.1cm}
	%\subfloat[...]
	{\includegraphics*[viewport=80 190  550 600,width=0.32\textwidth]{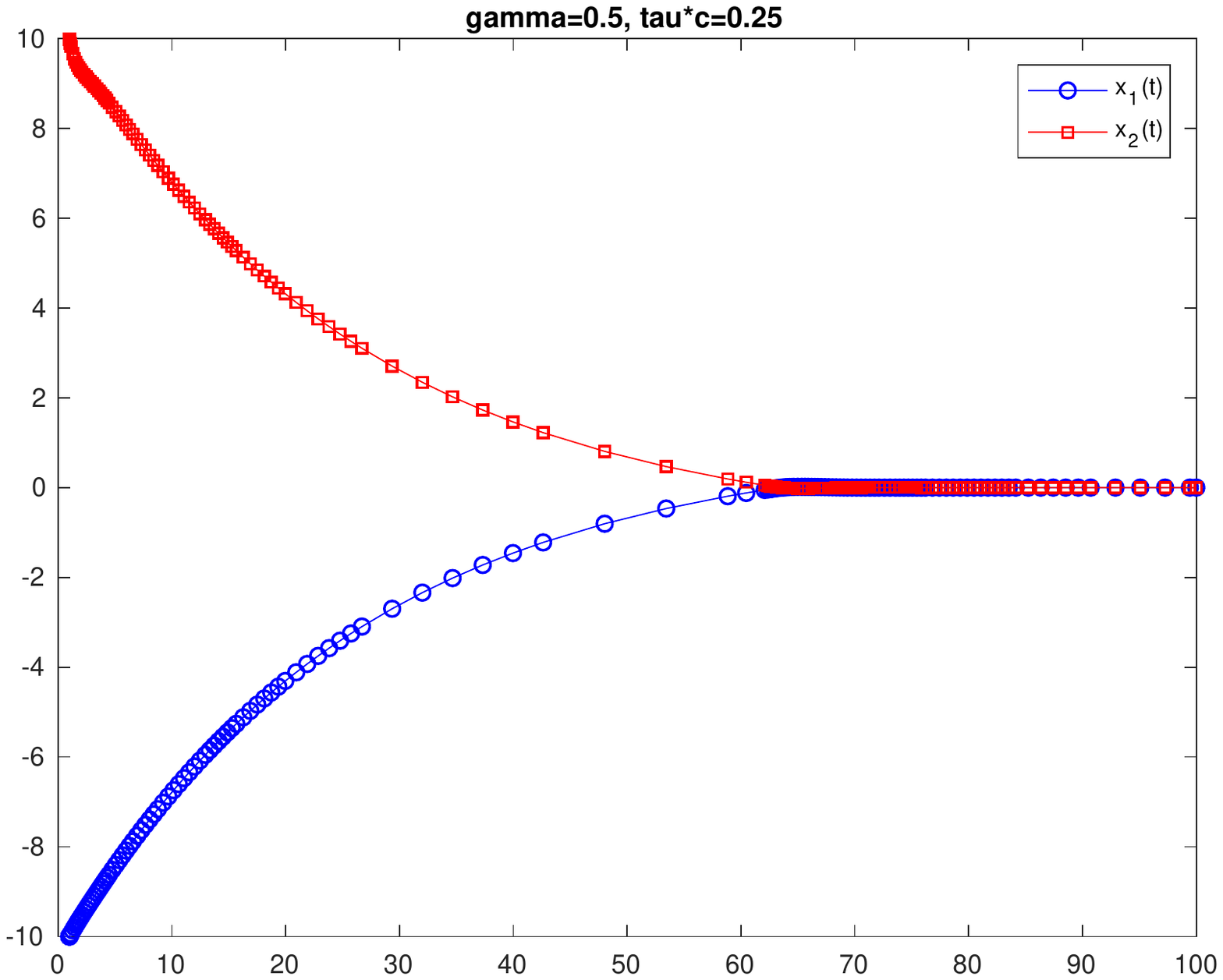}}\hspace{0.1cm}
	%\subfloat[...]
	{\includegraphics*[viewport=80 190  550 600,width=0.32\textwidth]{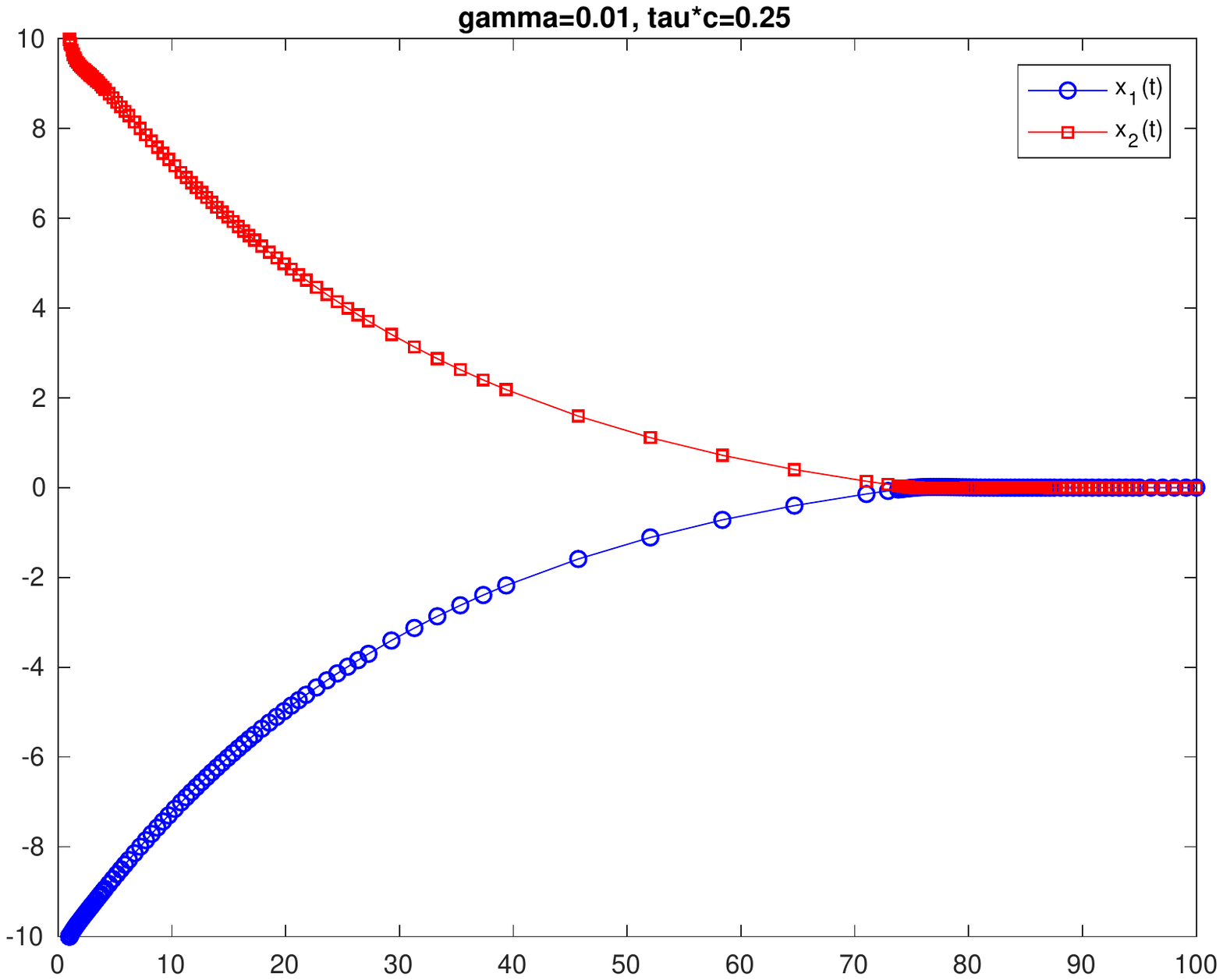}}\\
	{\includegraphics*[viewport=80 190  550 600,width=0.32\textwidth]{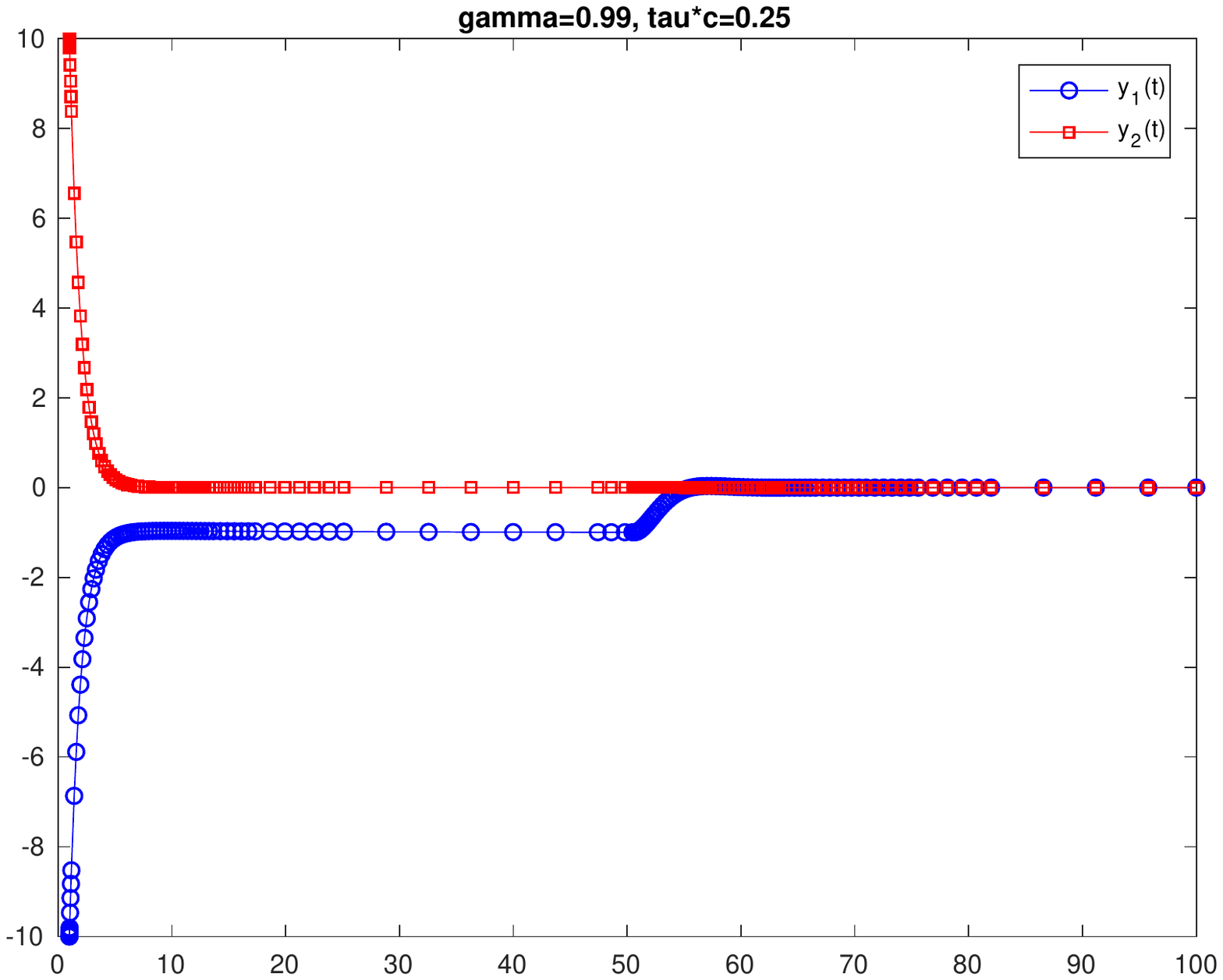}} \hspace{0.1cm}
	%\subfloat[...]
	{\includegraphics*[viewport=80 190  550 600,width=0.32\textwidth]{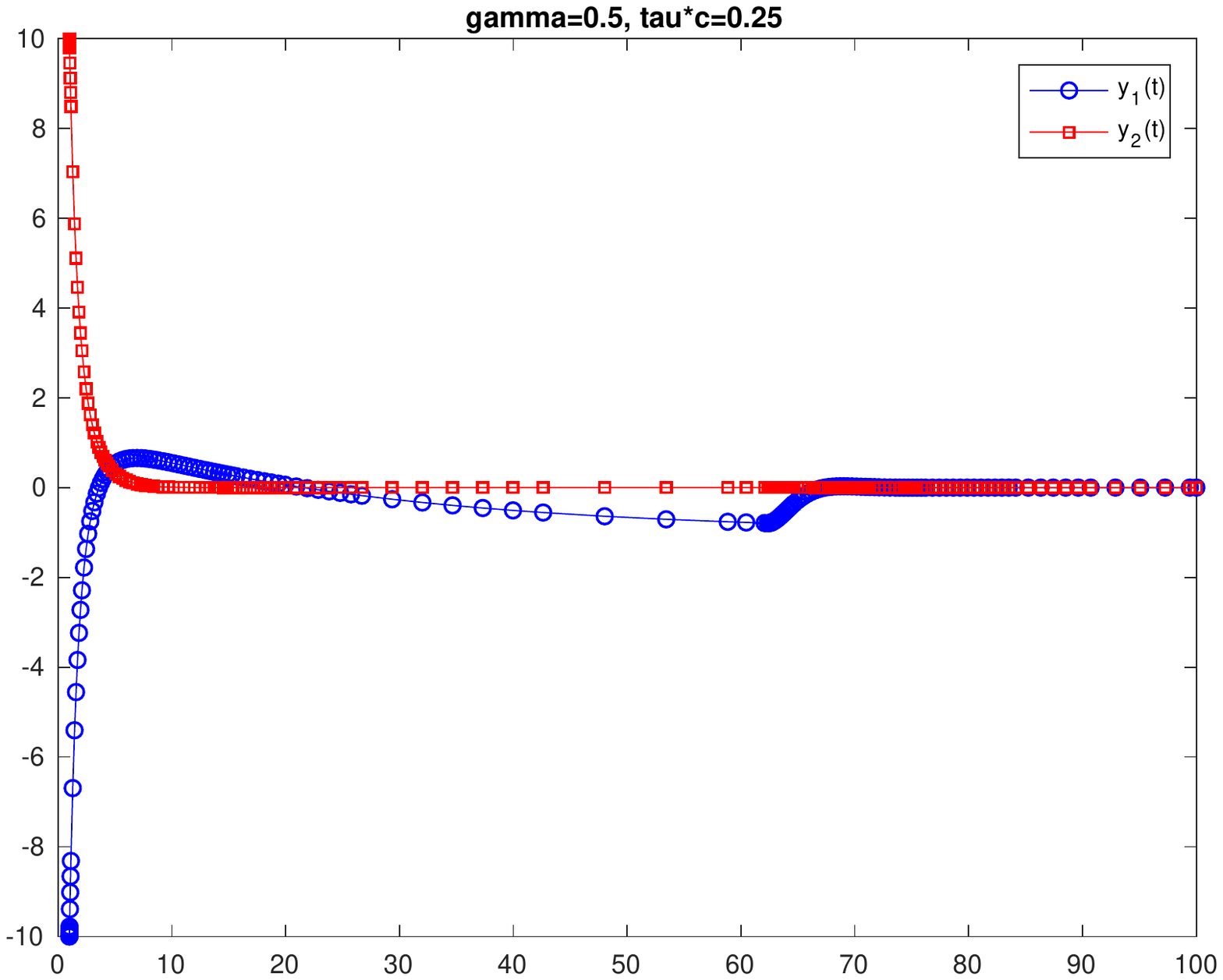}}\hspace{0.1cm}
	%\subfloat[...]
	{\includegraphics*[viewport=80 190  550 600,width=0.32\textwidth]{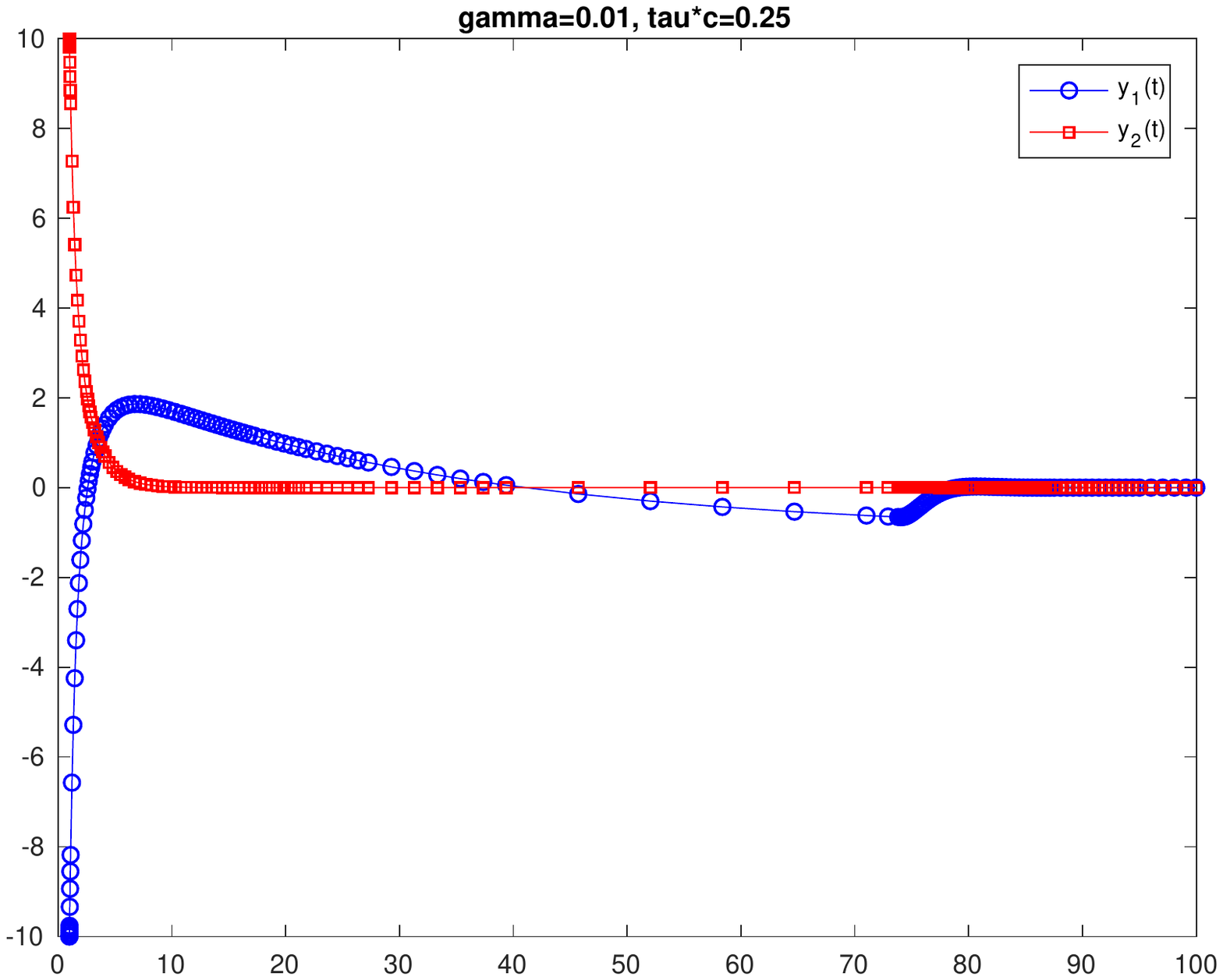}}
	\caption{\small First row: the primal trajectory $x(t)$ approaching the primal optimal solution $(0,0)$ for $\tau c = 0.25$ and starting point $x^0=(-10,10)$. Second row: the dual trajectory $y(t)$ approaching a dual optimal solution for $\tau c = 0.25$ and starting point $y^0=(-10,10)$.}
	\label{fig:ex2}	
\end{figure}

We considered the dynamical system \eqref{ADMMdysyM1} attached to the primal-dual pair \eqref{primalex}-\eqref{dualex} with starting points $x^0=(-10,10)$, $z^0 = Ax^0 = (-20,0)$ and $y^0=(-10,10)$
in the case when $\tau(t) = \tau >0$ for every $t \in [0,+\infty)$ is a constant function.
In order to solve the resulting dynamical system we used the Matlab function {\tt ode15s} and, to this end, we reformulated it as
\begin{equation*}
\left \{
\begin{array}{l}
\dot{U}(t) =\G(U(t))\\
U(0) =(x^0,y^0,z^0),
\end{array}
\right.
\end{equation*}
where 
$$U(t)=(x(t),y(t),z(t))\in {\mathcal H}\times {\mathcal G}\times {\mathcal G}$$ 
and 
$$\Gamma:{\mathcal H}\times {\mathcal G}\times {\mathcal G}\to {\mathcal H}\times {\mathcal G}\times {\mathcal G}, \ \Gamma (u_1,u_2,u_3)=(u_4,u_5,u_6),$$
is defined as 
\begin{equation*}
\left\{
\begin{array}{llll}
u_4=\prox\nolimits_{\tau f}\left(u_1-\tau A^*(u_2+cAu_1-cu_3)\right)-u_1\\
\\
u_5=\prox\nolimits_{cg^*}(u_2+cA(\gamma u_4+u_1))-u_2-c(\gamma -1)Au_4\\
\\
u_6=A(u_1+u_4)-u_3-\frac{1}{c}u_5.
\end{array}
\right.
\end{equation*}
Notice that
\begin{align*}
A^*(x_1,x_2) & =  (x_1+x_2,-x_1+x_2) \quad \forall (x_1, x_2) \in \R^2,\\
\prox\nolimits_{\tau f}(x) & =  \frac{1}{1+\tau} x \quad \forall x \in \R^2, \\
\prox\nolimits_{c g^*}(y) & = \proj\nolimits_{[-1,1]^2}(y) \quad \forall y \in \R^2,
 \end{align*}
where $\proj_Q$ denotes the projection operator on a convex and closed set $Q \subseteq {\cal H}$.

According to Theorem \ref{convergenceM1}, the asymptotic convergence of the trajectories can be guaranteed when
$\tau c \|A\|^2 \leq 1$. Since $\|A\| = \sqrt{2}$, we considered for $\tau c \in (0, \frac{1}{2})$ three different choices, namely, $\tau c = 0.49, 0.25$ and $0.01$.
The primal and the dual trajectories generated by the dynamical system for each of these three choices are represented in the figures \ref{fig:ex1}, \ref{fig:ex2} and
\ref{fig:ex3}, respectively. The first row of each figure represents the primal trajectories $x(t)$ for $\gamma = 0.99, 0.5$ and $0.01$, while the second row represents the dual trajectories $y(t)$ for the same choices of the parameter $\gamma$. 

One can see that the parameter $\gamma$ has a strong influence on the asymptotic behaviour of the primal and dual trajectories. Namely, in all three figures, thus independently of the choice of the parameters $\tau$ and $c$, 
the primal and the dual trajectories converge faster to the corresponding primal and dual solutions, respectively, for larger values of $\gamma$, namely, when $\gamma$ is closer to $1$. As $\gamma$ is the coefficient of the derivative in the second inclusion of the dynamical system, it can be seen as a constant which characterizes its level of implicitness. In particular, more implicitness promotes a better asymptotic convergence. On the other hand, we notice that the smaller the values of $\tau c$ are, the smaller is the influence of $\gamma$ on the asymptotic convergence of the trajectories.

\begin{figure}[tb]
	\centering
	\captionsetup[subfigure]{position=top}
	%\subfloat[...]
	{\includegraphics*[viewport=80 190  550 600,width=0.32\textwidth]{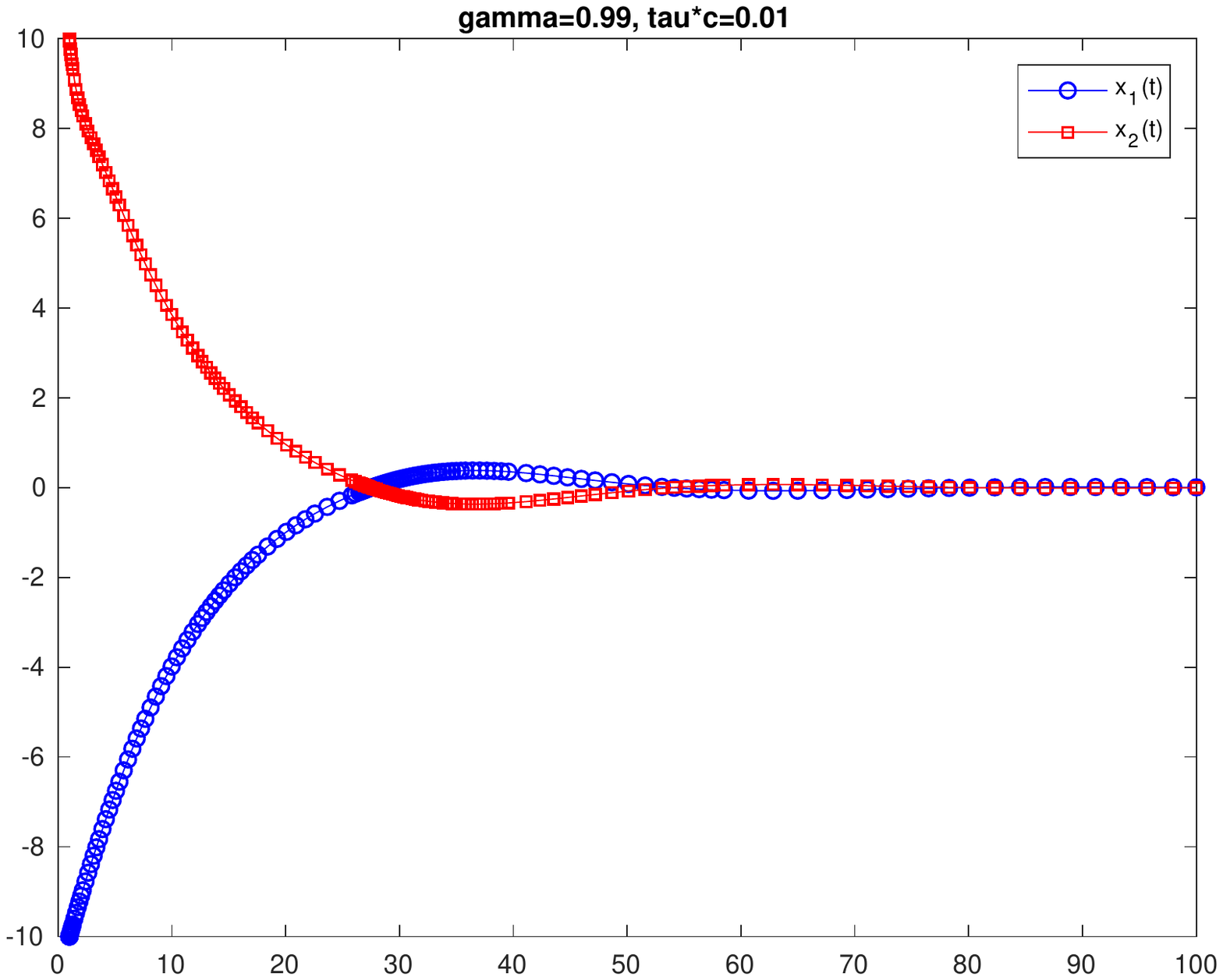}} \hspace{0.1cm}
	%\subfloat[...]
	{\includegraphics*[viewport=80 190  550 600,width=0.32\textwidth]{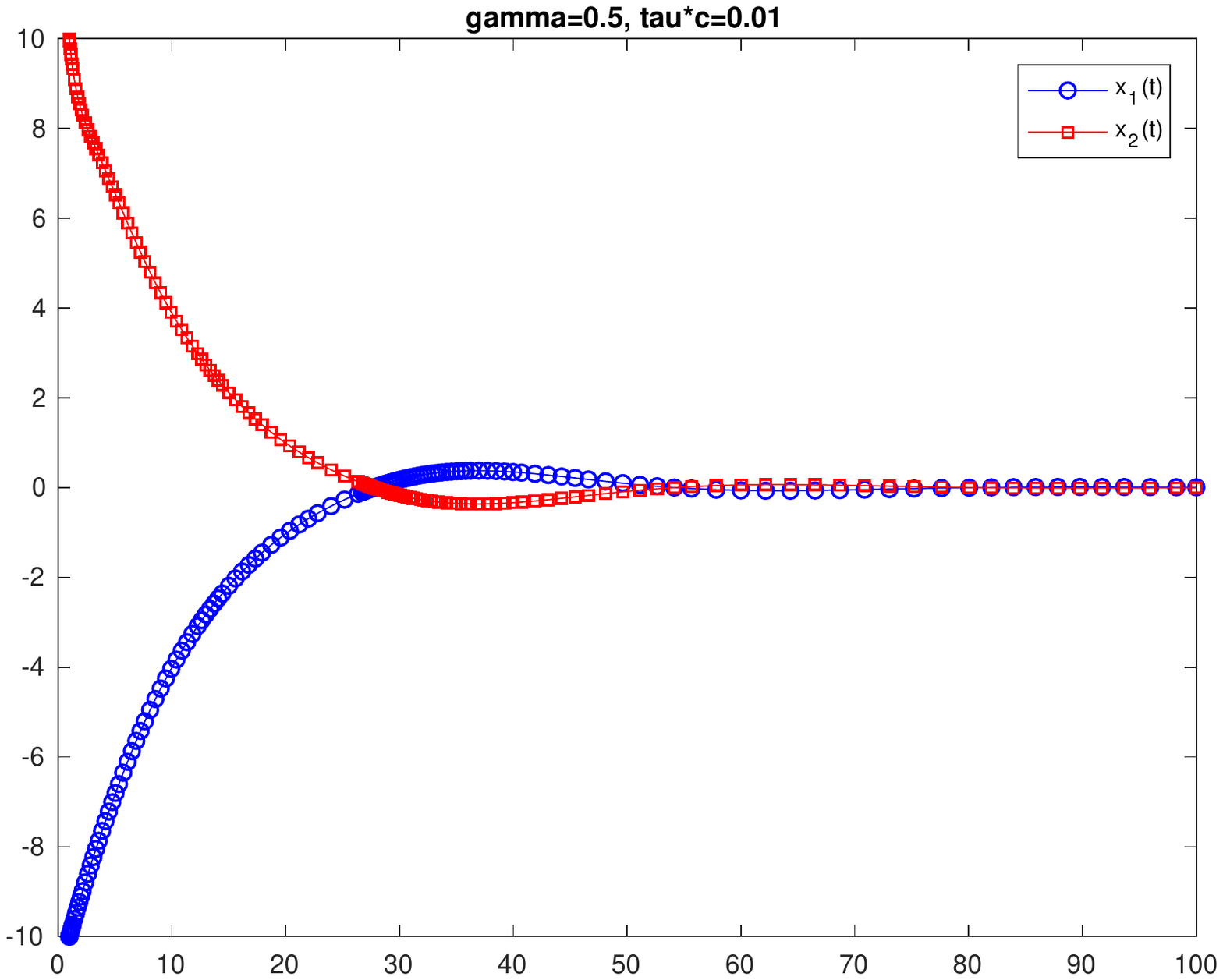}}\hspace{0.1cm}
	%\subfloat[...]
	{\includegraphics*[viewport=80 190  550 600,width=0.32\textwidth]{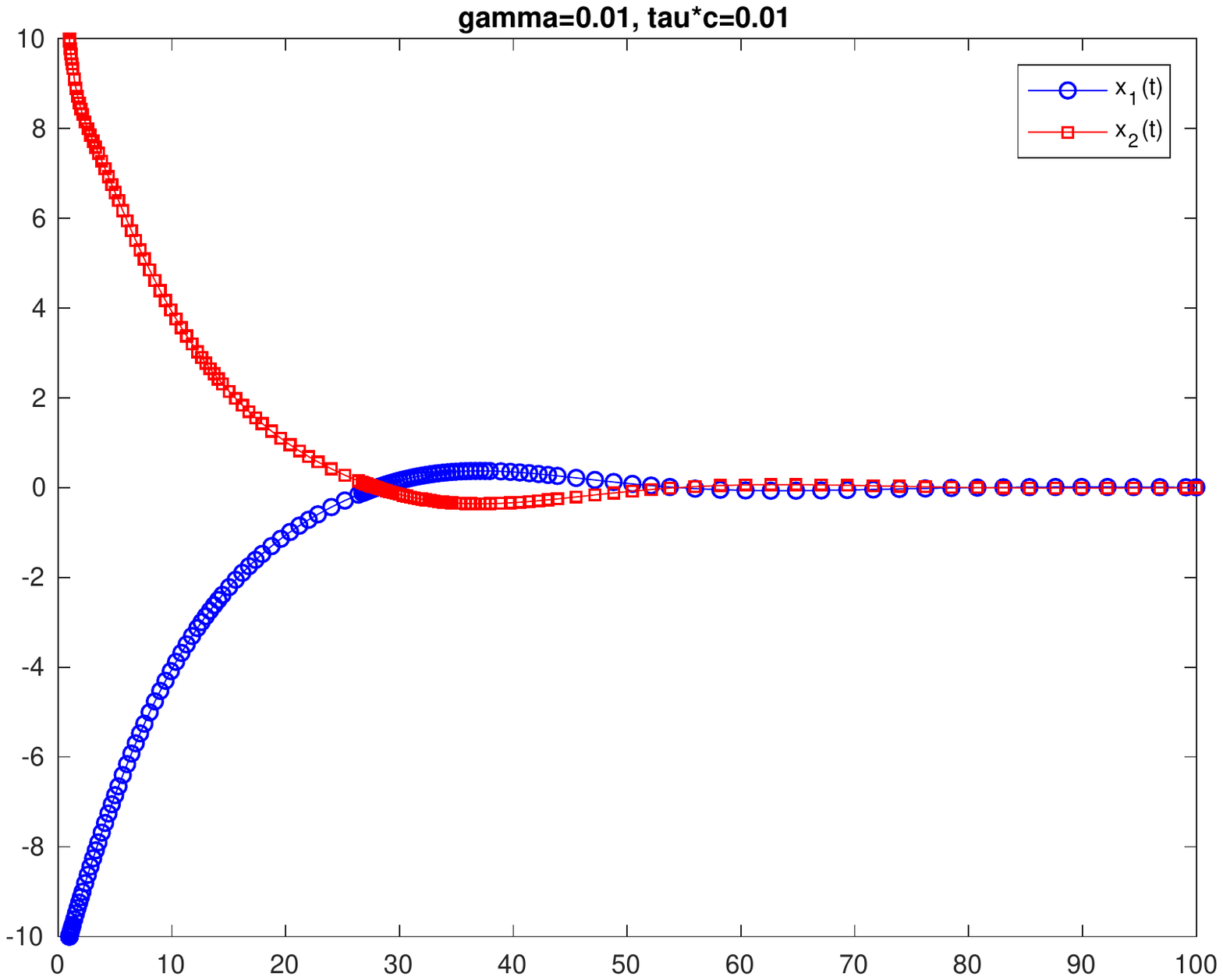}}\\
	{\includegraphics*[viewport=80 190  550 600,width=0.32\textwidth]{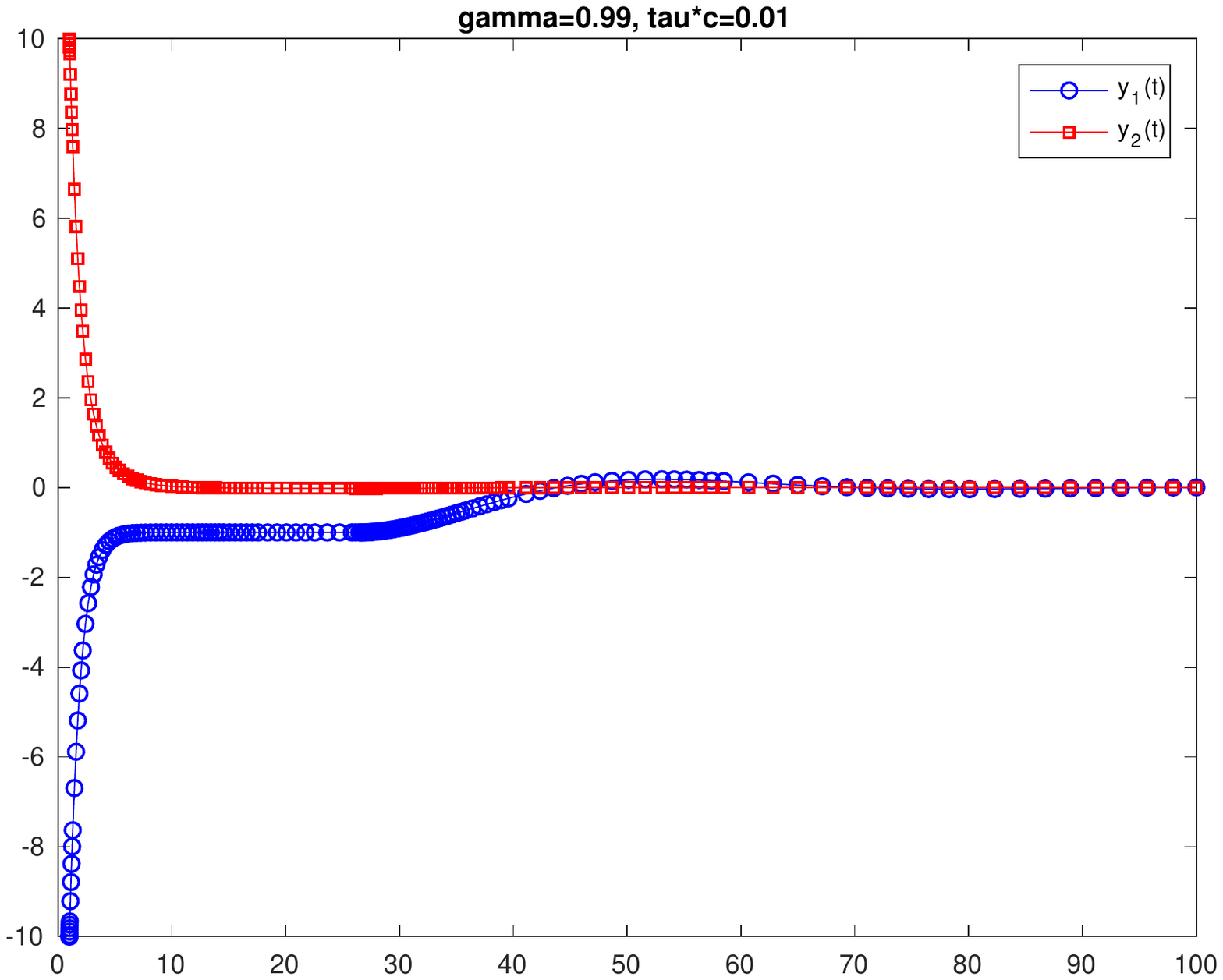}} \hspace{0.1cm}
	%\subfloat[...]
	{\includegraphics*[viewport=80 190  550 600,width=0.32\textwidth]{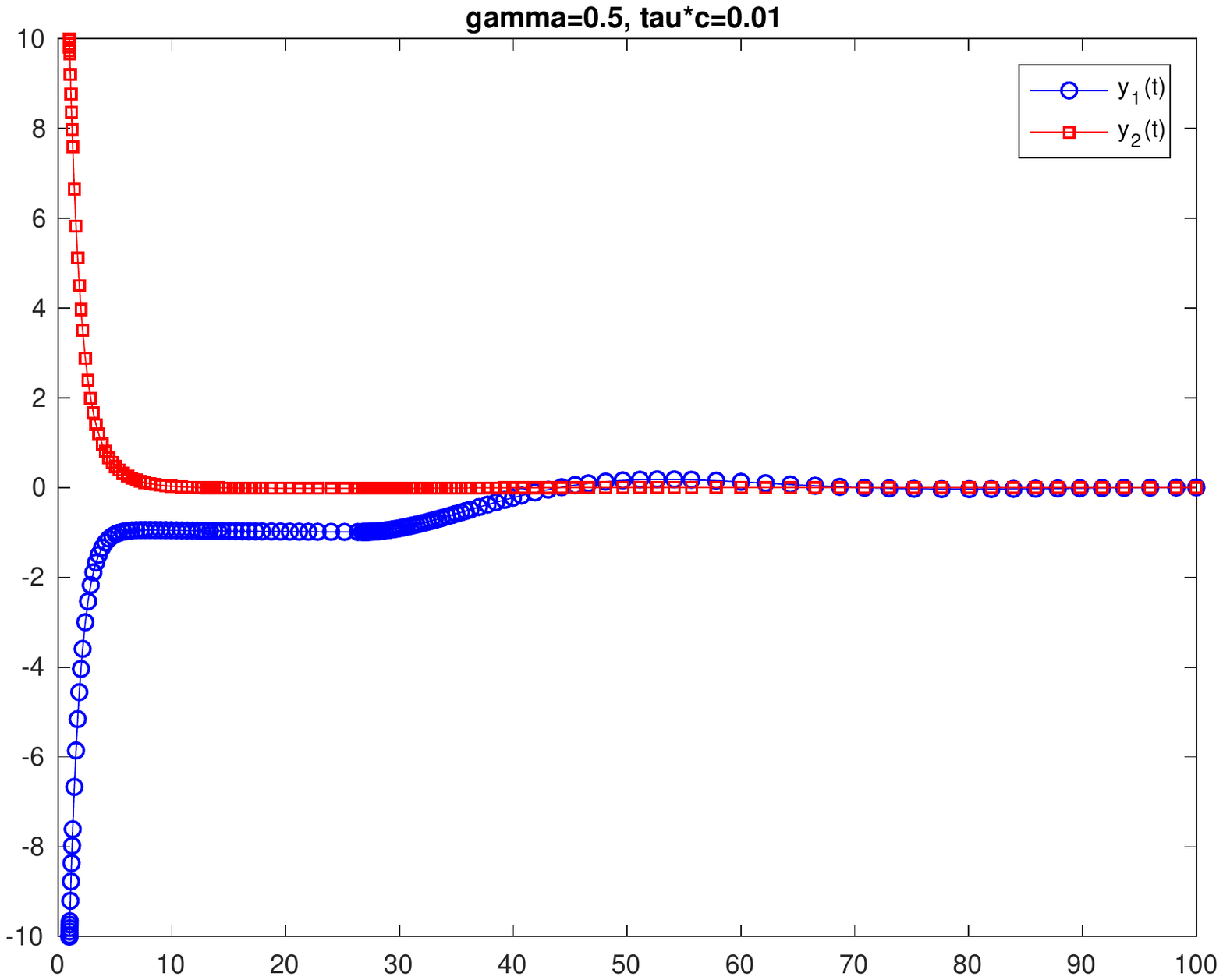}}\hspace{0.1cm}
	%\subfloat[...]
	{\includegraphics*[viewport=80 190  550 600,width=0.32\textwidth]{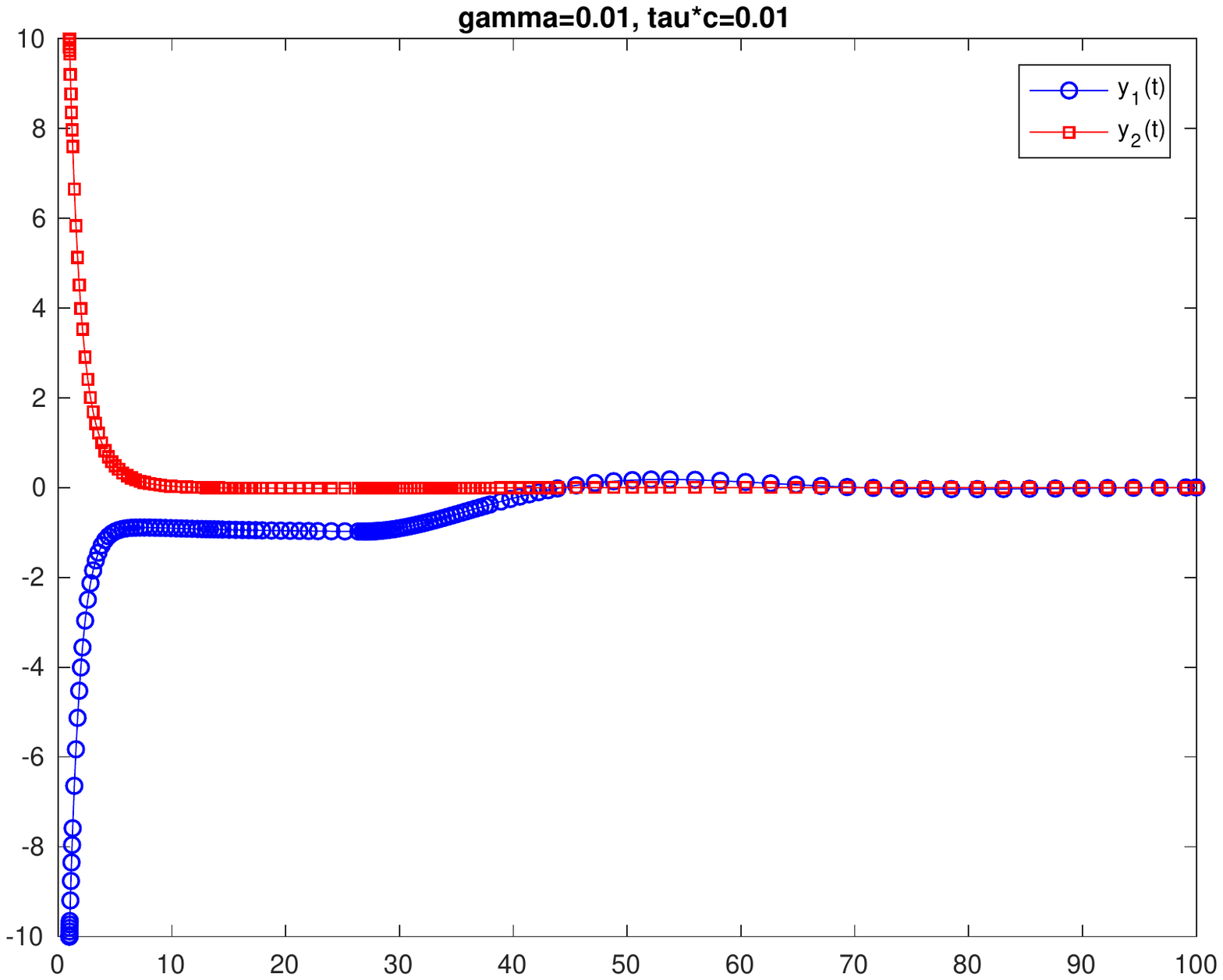}}
	\caption{\small First row: the primal trajectory $x(t)$ approaching the primal optimal solution $(0,0)$ for $\tau c = 0.1$ and starting point $x^0=(-10,10)$. Second row: the dual trajectory $y(t)$ approaching a dual optimal solution for $\tau c = 0.1$ and starting point $y^0=(-10,10)$.}
	\label{fig:ex3}	
\end{figure}
\end{example}

\begin{notation}\label{notation}
The following two functions will play an important role in particular in the forthcoming analysis 
$$F:[0,+\infty)\times\mathcal{H}\To\oR, \quad F(t,x)=f(x)+\frac{c}{2}(\|Ax\|^2-\|x\|^2)+\frac12\|x\|^2_{M_1(t)},$$
and 
$$G:[0,+\infty)\times\mathcal{G}\To\oR, \quad G(t,z)=g(z)+\frac12\|z\|^2_{M_2(t)}.$$
With these two notations, the dynamical system \eqref{ADMMdysy-subdiff} can be rewritten as
\begin{equation}\label{ADMMdysy}
\left\{
\begin{array}{llll}
\dot{x}(t)+x(t)\in\argmin\limits_{x\in\mathcal{H}}\left(F(t,x)+\frac{c}{2}\left\|x-\left(\frac{1}{c}M_1(t)x(t)+A^*z(t)-\frac{A^*}{c}y(t)-\frac{1}{c}\n h(x(t))\right)\right\|^2\right)\\
\\
\dot{z}(t)+z(t)=\argmin\limits_{z\in\mathcal{G}}\left(G(t,z)+\frac{c}{2}\left\|z-\left(\frac{1}{c}M_2(t)z(t)+A(\g\dot{x}(t)+x(t))+\frac{1}{c}y(t)\right)\right\|^2\right)\\
\\
\dot{y}(t)=c A(x(t)+\dot{x}(t))-c (z(t)+\dot{z}(t))\\
\\
x(0)=x^0\in{\mathcal H},\,y(0)=y^0\in{\mathcal G},\,z(0)=z^0\in{\mathcal G}.
\end{array}
\right.
\end{equation}
Let $t \in [0,+\infty)$ be fixed. The function $G(t,\cdot)$ is proper, convex and lower semicontinuous, hence $z\rightarrow G(t,z)+\frac{c}{2}\|z-v\|^2$ is  proper, strongly convex and lower semicontinuous for every $v \in {\cal G}$. This allows us to use the sign equal 
in the second relation of \eqref{ADMMdysy}. On the other hand, a sufficient condition which guarantees that the function $x \mapsto F(x,t) + \frac{c}{2}\left\|x-\left(\frac{1}{c}M_1(t)x(t)+A^*z(t)-\frac{A^*}{c}y(t)-\frac{1}{c}\n h(x(t))\right)\right\|^2$, which is proper and lower semicontinuous, is strongly convex is that
there exists $\alpha(t) >0$ such that $cA^*A+M_1(t)\in P_{\a(t)}(\mathcal{H})$. This actually ensures that $x\rightarrow F(t,x)+\frac{c}{2}\|x-u\|^2$ is proper, strongly convex and lower semicontinuous for every $u \in {\cal H}$.

This means that if the assumption 
$$(Cweak) \quad \text{for every} \  t\in[0,+\infty) \ \text{there exists} \ \a(t)>0 \ \text{such that} \ cA^*A+M_1(t)\in P_{\a(t)}(\mathcal{H})$$
holds, then we can use also in the first relation of \eqref{ADMMdysy} the sign equal. It is easy to see, that, if $(Cweak)$ holds, then $\partial f + cA^*A + M_1(t)$ is $\alpha(t)$-strongly monotone for every $t \in [0,+\infty)$. In other words, for every $t \in [0,+\infty)$, all $u,v \in {\cal H}$ and all $u^* \in (\partial f + cA^*A + M_1(t))(u), x^* \in (\partial f + cA^*A + M_1(t))(x)$ we have
$$\left \langle u^* - x^*, u-x \right \rangle \geq \alpha(t) \|u-x\|^2.$$
Notice that, since $A^*A\in S_+(\mathcal{H})$ and $M_1(t)\in S_+(\mathcal{H})$ for every $t\in[0,+\infty)$, $(Cweak)$ is fulfilled, if
\begin{equation}\label{C1}
\text{for every} \  t\in[0,+\infty) \ \text{there exists} \ \a(t)>0 \ \text{such that} \ M_1(t)\in P_{\a(t)}(\mathcal{H})
\end{equation}
or, if
\begin{equation}\label{C2}
\text{there exists} \ \a >0 \ \text{such that} \ A^*A \in P_{\a}(\mathcal{H}).
\end{equation}
Notice also that, if ${\cal H}$ is a finite dimensional Hilbert space, then \eqref{C2}, which is independent of $t$, is nothing else than $A^*A$ is positively definite or, equivalently, $A$ is injective.

Let $S=\{x\in\mathcal{H}:\|x\|=1\}$ be the unit sphere of $\mathcal{H}$. Assumption $(Cweak)$ is fulfilled if and only if $\inf_{x\in S}\|x\|^2_{cA^*A+M_1(t)} >0$ for every $t \in [0,+\infty)$. In this case we can take
$\a(t):=\inf_{x\in S}\|x\|^2_{cA^*A+M_1(t)}$ for every $t \in [0,+\infty)$. 

Obviously, if $(Cstrong)$ holds, then $(Cweak)$ holds with $\alpha(t) := \alpha >0$ for every $t\in[0,+\infty)$.
\end{notation}

\section{Existence and uniqueness of the trajectories}\label{sec2}

In this section we will investigate the existence and uniqueness of the trajectories generated by \eqref{ADMMdysy-subdiff}. We start by recalling the definition of a locally absolutely continuous map.

\begin{definition}\label{abs-cont}
A function $x:[0,+\infty)\rightarrow {\mathcal H}$  is said to be locally absolutely continuous, if it is absolutely continuous on every interval $[0,T],\,T>0$; that is, for every $T >0$ there exists an integrable function $y:[0,T]\rightarrow {\mathcal H}$ such that $$x(t)=x(0)+\int_0^t y(s)ds \ \ \forall t\in[0,T].$$
\end{definition}

\begin{remark}\label{rem-abs-cont} (a)  Every absolutely continuous function is differentiable almost
everywhere, its derivative coincides with its distributional derivative almost everywhere and one can recover the function from its derivative $\dot x=y$ by the above integration formula.

(b) Let be $T >0$ and  $x:[0,T]\rightarrow {\mathcal H}$ an absolutely continuous function. This is equivalent to (see \cite{att-sv2011, abbas-att-sv}): for every $\varepsilon > 0$ there exists $\eta >0$ such that for any finite family of intervals $I_k=(a_k,b_k)\subseteq [0,T]$ the following property holds:
$$ \mbox{for any subfamily of disjoint intervals} \ I_j \ \mbox{with} \ \sum_j|b_j-a_j| < \eta \ \mbox{it holds} \ \sum_j\|x(b_j)-x(a_j)\| < \varepsilon.$$
From this characterization it is easy to see that, if $B:{\mathcal H}\rightarrow {\mathcal H}$ is $L$-Lipschitz continuous with $L\geq 0$, then the function $z=B\circ x$ is absolutely continuous, too.  This means that $z$ is differentiable almost everywhere and $\|\dot z (\cdot)\|\leq L\|\dot x(\cdot)\|$ holds almost everywhere.
\end{remark}

The following definition specifies which type of solutions we consider in the analysis of the dynamical system \eqref{ADMMdysy-subdiff}.

\begin{definition}\label{str-sol} Let $(x^0,z^0,y^0) \in {\mathcal H}\times{\mathcal G}\times{\mathcal G}$, $c >0$, $\gamma \in [0,1]$, and $M_1 : [0,+\infty) \rightarrow {\cal S}_+({\cal H})$ and $M_2 : [0,+\infty) \rightarrow {\cal S}_+({\cal G})$. We say that the function $(x,z,y):[0,+\infty)\To {\mathcal H}\times{\mathcal G}\times{\mathcal G}$ is a strong global solutions of \eqref{ADMMdysy-subdiff}, if the
following properties are satisfied:
\begin{enumerate}
\item[(i)] the functions $x,z,y$ are locally absolutely continuous;

\item[(ii)] for almost every $t \in [0, +\infty)$
\begin{align*}
\dot{x}(t)+x(t) & \in\left(\partial f +cA^*A+M_1(t)\right)^{-1}\left(M_1(t)x(t)+cA^*z(t)-A^*y(t)-\n h(x(t))\right),\\ 
\dot{z}(t)+z(t) & \in \left(\partial g +c\id+M_2(t)\right)^{-1}\left(M_2(t)z(t)+cA(\gamma\dot x(t)+x(t))+y(t)\right)\\
\dot{y}(t) & =c A(x(t)+\dot{x}(t))-c (z(t)+\dot{z}(t));
\end{align*}

\item[(iii)] $x(0)=x^0,\,z(0)=z^0,\mbox{ and }y(0)=y^0.$
\end{enumerate}
\end{definition}

The following results  will be useful in the proof of  the existence and uniqueness theorem.

\begin{lemma}\label{lipschitz1} Assume that $(Cweak)$ holds. Then, for every fixed $t\in[0,+\infty)$, the operator
$$S_{t}:\mathcal{H}\To\mathcal{H}, \quad S{_t}(u)=\argmin_{x\in\mathcal{H}}\left(F(t,x)+\frac{c}{2}\|x-u\|^2\right),$$
is Lipschitz continuous.
\end{lemma}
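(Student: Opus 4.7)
The plan is to identify $S_t$ as a suitably rescaled inverse of a strongly monotone operator and then invoke the standard fact that inverses of $\alpha$-strongly monotone operators are $\frac{1}{\alpha}$-Lipschitz.

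First I would write the first-order optimality condition for the minimization problem that defines $S_t(u)$. Since, by the discussion at the end of the Notation paragraph, assumption $(Cweak)$ guarantees that $x \mapsto F(t,x) + \frac{c}{2}\|x-u\|^2$ is proper, strongly convex and lower semicontinuous, the argmin is a well-defined single-valued map. Expanding $F(t,x)$, the cancellation of the $-\frac{c}{2}\|x\|^2$ term from $F(t,x)$ with the $+\frac{c}{2}\|x\|^2$ term from $\frac{c}{2}\|x-u\|^2$ leaves the subdifferential calculus yielding
\begin{equation*}
0 \in \partial f(x) + cA^*Ax + M_1(t)x - cu,
\end{equation*}
so that $x = S_t(u)$ is equivalently characterized by
\begin{equation*}
S_t(u) = \bigl(\partial f + cA^*A + M_1(t)\bigr)^{-1}(cu).
\end{equation*}

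Next I would use $(Cweak)$ to argue that the operator $T_t := \partial f + cA^*A + M_1(t)$ is $\alpha(t)$-strongly monotone. Indeed, $\partial f$ is monotone (since $f$ is convex, proper and l.s.c.), and for every $x \in \mathcal{H}$ one has $\langle (cA^*A + M_1(t))x, x\rangle = \|x\|^2_{cA^*A + M_1(t)} \geq \alpha(t)\|x\|^2$, so adding this self-adjoint positive definite linear term to the monotone operator $\partial f$ produces an $\alpha(t)$-strongly monotone operator, exactly as noted in the Notation block.

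Finally, I would turn strong monotonicity into Lipschitz continuity of the inverse: for $u_1, u_2 \in \mathcal{H}$, set $x_i = S_t(u_i)$, so that $cu_i \in T_t(x_i)$. By $\alpha(t)$-strong monotonicity of $T_t$,
\begin{equation*}
\alpha(t)\|x_1 - x_2\|^2 \leq \langle cu_1 - cu_2, x_1 - x_2\rangle \leq c\|u_1 - u_2\|\,\|x_1 - x_2\|,
\end{equation*}
from which $\|S_t(u_1) - S_t(u_2)\| \leq \frac{c}{\alpha(t)}\|u_1 - u_2\|$, proving Lipschitz continuity of $S_t$ with constant $c/\alpha(t)$. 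There is no real obstacle here; the only step that needs care is recording that the cancellation makes the optimality condition involve $cA^*A + M_1(t)$ rather than a shifted operator, which is precisely the combination to which $(Cweak)$ applies.
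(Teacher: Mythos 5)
Your proposal is correct and follows essentially the same route as the paper: write the optimality condition $cu \in \partial f(S_t(u)) + (cA^*A + M_1(t))(S_t(u))$, use the $\alpha(t)$-strong monotonicity of $\partial f + cA^*A + M_1(t)$ guaranteed by $(Cweak)$, and conclude via Cauchy--Schwarz that $S_t$ is Lipschitz with constant $\frac{c}{\alpha(t)}$. The only cosmetic difference is that you phrase the conclusion as Lipschitz continuity of the inverse of a strongly monotone operator, while the paper applies the monotonicity inequality directly to the two inclusions; the content is identical.
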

\begin{proof} Let $t\in[0,+\infty)$ be fixed and $u,v\in\mathcal{H}$. By subdifferential calculus we obtain that
$$cu\in \p f(S{_t}(u))+ \big (cA^*A + M_1(t) \big) (S{_t}(u))$$
and
$$cv\in \p f(S{_t}(v))+\big (cA^*A + M_1(t) \big) (S{_t}(v)).$$
Using that, due to $(Cweak)$, $\partial f  + cA^*A + M_1(t)$ is $\alpha(t)$-strongly monotone, we get
$$\a(t)\|S{_t}u-S{_t}v\|^2 \leq c \left \langle u-v, S_t(u) - S_t(v) \right \rangle.$$
By the Cauchy-Schwartz inequality we obtain
$$\|S{_t}u-S{_t}v\|\le \frac{c}{\alpha(t)}\|u-v\|,$$
which shows that $S_t$ is Lipschitz continuous with constant $\frac{c}{\alpha(t)}$.
\end{proof}

Now we are going to prove another technical result which will be used in the proof of the main theorem of this section.

\begin{lemma}\label{lipschitz2}  Assume that $(Cweak)$ holds. Let be $(x,z,y)\in\mathcal{H}\times\mathcal{G}\times\mathcal{G}$ and the maps $R_{(x,z,y)}:[0,+\infty)\To\mathcal{H},$ 
$$R_{(x,z,y)}(t)=\argmin_{u\in\mathcal{H}}\left(F(t,u)+\frac{c}{2}\left\|u-\left(\frac{1}{c}M_1(t)x+A^*z-\frac{1}{c}A^*y-\frac{1}{c}\n h(x)\right)\right\|^2\right)-x,$$
and  $Q_{(x,z,y)}:[0,+\infty)\To\mathcal{G},$ 
$$Q_{(x,z,y)}(t)=\argmin_{v\in\mathcal{G}}\left(G(t,v)+\frac{c}{2}\left\|v-\left(\frac{1}{c}M_2(t)z+A\big(\g R_{(x,z,y)}(t)+x\big)+\frac{1}{c}y\right)\right\|^2\right)-z.$$
Then the following statements are true for every $t,r \in [0,+\infty)$:
\begin{itemize}
\item[(i)]$\|R_{(x,z,y)}(t)-R_{(x,z,y)}(r)\|\le \frac{\|R_{(x,z,y)}(r)\|}{\a(t)} \|M_1(t)-M_1(r)\|;$

\item[(ii)] $\|Q_{(x,z,y)}(t)-Q_{(x,z,y)}(r)\|\le \frac{\|Q_{(x,z,y)}(r)\|}{c}\|M_2(t)-M_2(r)\|+\frac{\g\|A\| \|R_{(x,z,y)}(r)\|}{\a(t)} \|M_1(t)-M_1(r)\|.$
\end{itemize}
\end{lemma}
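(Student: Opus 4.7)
The plan is to derive, in each case, the first–order optimality condition satisfied by the argmin, exploit the (strong) monotonicity of the relevant sum–operator, and peel off the $t$–dependence of the metric $M_1$, respectively $M_2$, via Cauchy–Schwarz.

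For (i), let $\tilde R(t) := R_{(x,z,y)}(t)+x$ and $\tilde R(r) := R_{(x,z,y)}(r)+x$ be the two argmins. Using the explicit form of $F(t,\cdot)$ together with subdifferential calculus, the optimality conditions rewrite as
\begin{align*}
M_1(t)x+cA^*z-A^*y-\nabla h(x) &\in \bigl(\partial f+cA^*A+M_1(t)\bigr)(\tilde R(t)),\\
M_1(r)x+cA^*z-A^*y-\nabla h(x) &\in \bigl(\partial f+cA^*A+M_1(r)\bigr)(\tilde R(r)).
\end{align*}
I would isolate $\partial f(\tilde R(\cdot))$ on one side of each inclusion, subtract, and take the inner product with $\tilde R(t)-\tilde R(r)=R_{(x,z,y)}(t)-R_{(x,z,y)}(r)$. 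Monotonicity of $\partial f$ then yields
$$\bigl\langle (M_1(t)-M_1(r))(x-\tilde R(r)),\,\tilde R(t)-\tilde R(r)\bigr\rangle \ge \|\tilde R(t)-\tilde R(r)\|^2_{cA^*A+M_1(t)}\ge \alpha(t)\|R_{(x,z,y)}(t)-R_{(x,z,y)}(r)\|^2,$$
after collecting the cross terms $-M_1(t)(\tilde R(t)-\tilde R(r))-(M_1(t)-M_1(r))\tilde R(r)$ and using $(Cweak)$. The decisive accounting step is the identity $x-\tilde R(r)=-R_{(x,z,y)}(r)$, which replaces $x$ on the left by $R_{(x,z,y)}(r)$; Cauchy–Schwarz and division by $\|R_{(x,z,y)}(t)-R_{(x,z,y)}(r)\|$ then give (i).

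For (ii), I repeat the same procedure with $\tilde Q(t):=Q_{(x,z,y)}(t)+z$: the optimality condition is
$$M_2(t)z+cA(\gamma R_{(x,z,y)}(t)+x)+y \in \bigl(\partial g+cI+M_2(t)\bigr)(\tilde Q(t)),$$
and analogously at $r$. Subtracting, testing against $\tilde Q(t)-\tilde Q(r)=Q_{(x,z,y)}(t)-Q_{(x,z,y)}(r)$, and using monotonicity of $\partial g$ together with the $c$-strong monotonicity coming from the $cI$-term produces
$$\bigl\langle -(M_2(t)-M_2(r))Q_{(x,z,y)}(r)+c\gamma A\bigl(R_{(x,z,y)}(t)-R_{(x,z,y)}(r)\bigr),\,Q_{(x,z,y)}(t)-Q_{(x,z,y)}(r)\bigr\rangle \ge c\|Q_{(x,z,y)}(t)-Q_{(x,z,y)}(r)\|^2,$$
after using $z-\tilde Q(r)=-Q_{(x,z,y)}(r)$. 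Cauchy–Schwarz and division by $\|Q_{(x,z,y)}(t)-Q_{(x,z,y)}(r)\|$ produce an estimate with a $\gamma\|A\|\,\|R_{(x,z,y)}(t)-R_{(x,z,y)}(r)\|$ remainder, which is handled by plugging in the bound from (i).

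There is no serious obstacle: the computation is bookkeeping built around two standard tricks — rewriting $x-\tilde R(r)=-R_{(x,z,y)}(r)$ (respectively $z-\tilde Q(r)=-Q_{(x,z,y)}(r)$) so that the $t$-dependence of $M_1$ (respectively $M_2$) is paired with the small quantity $R_{(x,z,y)}(r)$ (respectively $Q_{(x,z,y)}(r)$), and using the coercivity supplied by $(Cweak)$ for the primal inequality and by the $cI$-term for the auxiliary variable. The one point that requires a slight care is that on the right-hand side of each monotonicity inequality only $M_1(t)$ (not $M_1(r)$) appears, so that the lower bound $\alpha(t)$ is justified; I would make this explicit by splitting $M_1(t)\tilde R(t)-M_1(r)\tilde R(r)=M_1(t)(\tilde R(t)-\tilde R(r))+(M_1(t)-M_1(r))\tilde R(r)$ before applying monotonicity.
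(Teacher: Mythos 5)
Your proof is correct and follows essentially the same route as the paper: write the optimality conditions for the two argmins, use the decomposition $M_1(t)\tilde R(t)-M_1(r)\tilde R(r)=M_1(t)(\tilde R(t)-\tilde R(r))+(M_1(t)-M_1(r))\tilde R(r)$ together with $x-\tilde R(r)=-R_{(x,z,y)}(r)$ (resp.\ $z-\tilde Q(r)=-Q_{(x,z,y)}(r)$), apply the $\a(t)$-strong monotonicity supplied by $(Cweak)$ (resp.\ the $c$-strong monotonicity from the $cI$-term), and finish with Cauchy--Schwarz, chaining (i) into (ii). The only difference is cosmetic: the paper phrases the coercivity as strong monotonicity of the sum operator $\p f+cA^*A+M_1(t)$, while you invoke monotonicity of $\p f$ and carry the quadratic term $\|\cdot\|^2_{cA^*A+M_1(t)}$ explicitly, which is the same estimate.
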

\begin{proof} Let $t,r\in[0,+\infty)$ be fixed.

(i) From the definition of $R_{(x,z,y)}$ one has
$$M_1(t)x+cA^*z-A^*y-\n h(x)\in \p f(R_{(x,z,y)}(t)+x)+ \big(cA^*A + M_1(t)\big )(R_{(x,z,y)}(t)+x)$$
and
$$M_1(r)x+cA^*z-A^*y-\n h(x)\in \p f(R_{(x,z,y)}(r)+x)+ \big(cA^*A + M_1(r)\big )(R_{(x,z,y)}(r)+x),$$
which is equivalent to
\begin{align*}
M_1(t)(R_{(x,z,y)}(r)+x)-M_1(r)(R_{(x,z,y)}(r))+cA^*z-A^*y-\n h(x) & \in\\
\p f(R_{(x,z,y)}(r)+x)+\big(cA^*A + M_1(t))(R_{(x,z,y)}(r)+x). &
\end{align*}
Using again that $\p f+cA^*A+M_1(t)$ is $\alpha(t)$-strongly monotone for every $t \in [0,+\infty)$, we obtain
$$\<M_1(t)(R_{(x,z,y)}(r))-M_1(r)(R_{(x,z,y)}(r)),R_{(x,z,y)}(r)-R_{(x,z,y)}(t)\>\ge \a(t)\|R_{(x,z,y)}(r)-R_{(x,z,y)}(t)\|^2.$$
The conclusion follows via the Cauchy-Schwarz inequality.

(ii) From the definition of $Q_{(x,z,y)}$ one has
$$M_2(t)z+cA(\g R_{(x,z,y)}(t)+x)+y\in \p g(Q_{(x,z,y)}(t)+z)+ \big(M_2(t) + cI\big)(Q_{(x,z,y)}(t)+z)$$
and
$$M_2(r)z+cA(\g R_{(x,z,y)}(r)+x)+y\in \p g(Q_{(x,z,y)}(r)+z)+ \big(M_2(r) + cI\big)(Q_{(x,z,y)}(r)+z),$$
which is equivalent to
\begin{align*}
-M_2(r)(Q_{(x,z,y)}(r))+M_2(t)(Q_{(x,z,y)}(r)+z)+cA(\g R_{(x,z,y)}(r)+x)+y &\in\\
\p g(Q_{(x,z,y)}(r)+z)+ \big(M_2(t) + cI \big)(Q_{(x,z,y)}(r)+z). &
\end{align*}
Using that $\p g+M_2(t)+cI$ is $c-$strongly monotone, we obtain
\begin{align*}
\<M_2(t)(Q_{(x,z,y)}(r))-M_2(r)(Q_{(x,z,y)}(r))+c\g A(R_{(x,z,y)}(r)-R_{(x,z,y)}(t)),Q_{(x,z,y)}(r)-Q_{(x,z,y)}(t)\> & \ge\\
c\|Q_{(x,z,y)}(r)-Q_{(x,z,y)}(t)\|^2.&
\end{align*}
From the Cauchy-Schwarz inequality and (i) it follows
$$\|Q_{(x,z,y)}(r)-Q_{(x,z,y)}(t)\|\le \frac{\|Q_{(x,z,y)}(r)\|}{c}\|M_2(t)-M_2(r)\|+\frac{\g\|A\| \|R_{(x,z,y)}(r)\|}{\a(t)}\|M_1(t)-M_1(r)\|.$$
\end{proof}

Now we can prove existence and uniqueness of a strong global solution of \eqref{ADMMdysy-subdiff} under $(CStrong)$. 
To this end we will first reformulate  \eqref{ADMMdysy} as a particular first order dynamical system in a suitably chosen product space (see also \cite{alv-att-bolte-red}). Subsequently
we will make use of the Cauchy-Lipschitz-Picard Theorem for absolutely continues trajectories (see, for example, \cite[Proposition 6.2.1]{haraux}, \cite[Theorem 54]{sontag}).  Notice that under $(CStrong)$  the operator $S_t$ in Lemma \ref{lipschitz1} is Lipschitz continuous with constant $\frac{c}{\alpha}$ for every $t\in[0,+\infty)$.

\begin{theorem}\label{uniq} Assume that $(Cstrong)$ holds, and $M_1 \in L^1_{loc}([0,+\infty), {\cal H})$ and $M_2 \in L^1_{loc}([0,+\infty), {\cal G})$, namely,
$$t\To\|M_1(t)\| \ \mbox{and} \ t\To\|M_2(t)\|$$ are integrable on $[0,T]$ for every $T>0$. 
Then, for every starting points $(x^0,z^0,y^0)\in {\mathcal H}\times{\mathcal G}\times{\mathcal G}$, the dynamical system \eqref{ADMMdysy-subdiff} has a unique strong global solution  $(x,z,y):[0,+\infty)\To {\mathcal H}\times{\mathcal G}\times{\mathcal G}.$
\end{theorem}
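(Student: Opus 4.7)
The plan is to reformulate \eqref{ADMMdysy-subdiff} as a single first-order Carath\'eodory differential equation on the product Hilbert space $\mathcal{E}:=\mathcal{H}\times\mathcal{G}\times\mathcal{G}$ and then invoke the Cauchy--Lipschitz--Picard theorem in its absolutely continuous version (see, e.g., \cite[Proposition 6.2.1]{haraux}). Setting $U(t)=(x(t),z(t),y(t))$ and using the maps $R_{(x,z,y)}, Q_{(x,z,y)}$ from Lemma \ref{lipschitz2}, I would introduce
$$\Gamma:[0,+\infty)\times\mathcal{E}\To\mathcal{E},\quad \Gamma\big(t,(x,z,y)\big)=\Big(R_{(x,z,y)}(t),\,Q_{(x,z,y)}(t),\,cA\big(x+R_{(x,z,y)}(t)\big)-c\big(z+Q_{(x,z,y)}(t)\big)\Big).$$
Then $(x,z,y)$ is a strong global solution of \eqref{ADMMdysy-subdiff} if and only if $U$ is locally absolutely continuous, solves $\dot U(t)=\Gamma(t,U(t))$ almost everywhere on $[0,+\infty)$, and satisfies $U(0)=(x^0,z^0,y^0)$. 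Two conditions thus need to be verified: (a) for almost every $t$, $U\mapsto \Gamma(t,U)$ is Lipschitz continuous on $\mathcal{E}$ with a Lipschitz constant $L(t)$ that is locally integrable in $t$; and (b) for every fixed $U\in\mathcal{E}$, the map $t\mapsto\Gamma(t,U)$ is measurable and locally integrable.

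For (a), under $(Cstrong)$ the operator $S_t$ from Lemma \ref{lipschitz1} is Lipschitz with constant $c/\a$ independent of $t$. Writing
$$R_{(x,z,y)}(t)=S_t\!\left(\tfrac{1}{c}M_1(t)x+A^*z-\tfrac{1}{c}A^*y-\tfrac{1}{c}\n h(x)\right)-x,$$
and combining the $L_h$--Lipschitz continuity of $\n h$ with $\|M_1(t)u\|\le\|M_1(t)\|\|u\|$ yields that $(x,z,y)\mapsto R_{(x,z,y)}(t)$ is Lipschitz with constant of the form $K_1\big(1+\|M_1(t)\|\big)$. An analogous computation, which additionally uses the Lipschitz estimate just obtained for $R$ (needed to bound how $Q$ varies through its dependence on $\g R_{(x,z,y)}(t)$), yields Lipschitz continuity of $(x,z,y)\mapsto Q_{(x,z,y)}(t)$ with constant of the form $K_2\big(1+\|M_1(t)\|+\|M_2(t)\|\big)$. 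The third component of $\Gamma$ inherits this property through the continuous operator $A$, so that $L(t)\le K\big(1+\|M_1(t)\|+\|M_2(t)\|\big)$, which is locally integrable by the hypotheses on $M_1, M_2$.

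For (b), the estimates in Lemma \ref{lipschitz2} show that $t\mapsto R_{(x,z,y)}(t)$ and $t\mapsto Q_{(x,z,y)}(t)$ are continuous at every point of continuity of $M_1(\cdot)$ and $M_2(\cdot)$; for the measurable (but only $L^1_{loc}$) data at hand, this is enough to give the measurability of these maps, and the local integrability of $t\mapsto\Gamma(t,U)$ then follows from (a). With (a) and (b) in hand, the Cauchy--Lipschitz--Picard theorem applied to $\dot U(t)=\Gamma(t,U(t))$ with $U(0)=(x^0,z^0,y^0)$ produces a unique solution on a maximal interval, and the globally-in-$U$ Lipschitz character of $\Gamma(t,\cdot)$ together with a standard Gr\"onwall argument rules out finite-time blow-up, yielding the unique strong global solution on $[0,+\infty)$.

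The main obstacle is the uniform-in-$U$ Lipschitz estimate for $\Gamma(t,\cdot)$ with constant integrable in $t$: the coupling between $Q_{(x,z,y)}(t)$ and $R_{(x,z,y)}(t)$ in the second inclusion forces one to control $Q$ via $R$ (hence the factor $\g\|A\|$ appearing already in Lemma \ref{lipschitz2}(ii) surfaces here), and the $t$-dependence of $M_1(t), M_2(t)$ is precisely what is absorbed by the $L^1_{loc}$ assumption. Everything else reduces to bookkeeping of affine operations in $\mathcal{E}$ and invocations of the two preparatory lemmas.
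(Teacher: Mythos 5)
Your proposal follows essentially the same route as the paper: you rewrite the system as $\dot U(t)=\Gamma(t,U(t))$ on $\mathcal{H}\times\mathcal{G}\times\mathcal{G}$ and apply the Cauchy--Lipschitz--Picard theorem, using Lemma \ref{lipschitz1} (with the $t$-independent constant $c/\alpha$ supplied by $(Cstrong)$) to get the Lipschitz estimate in the state variable with an $L^1_{loc}$ constant of order $1+\|M_1(t)\|+\|M_2(t)\|$, and Lemma \ref{lipschitz2} for the behaviour in $t$, exactly as in the paper's proof. The only slight imprecision is saying that local integrability of $t\mapsto\Gamma(t,U)$ ``follows from (a)''; as in the paper, it comes from the Lemma \ref{lipschitz2} bounds taken relative to the reference time $t=0$ together with $M_1,M_2\in L^1_{loc}$, which you have in any case already invoked.
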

\begin{proof}
Denoting $U(t)=(x(t),z(t),y(t))$, the dynamical system \eqref{ADMMdysy-subdiff} can be rewritten as
\begin{equation}\label{dysy1}
\left\{
\begin{array}{ll}
\dot{U}(t)=\G(t,U(t))\\
U(0)=(x^0,z^0,y^0),
\end{array}
\right.
\end{equation}
where 
$$\G:[0,+\infty)\times{\mathcal H}\times{\mathcal G}\times{\mathcal G}\To {\mathcal H}\times{\mathcal G}\times{\mathcal G}, \quad \G(t,x,z,y)=\left(u,v,w\right),$$
is defined as
\begin{align*}
u=u(t,x,z,y) & =\argmin_{a\in\mathcal{H}}\left(F(t,a)+\frac{c}{2}\left\|a-\left(\frac{1}{c}M_1(t)x+A^*z-\frac{1}{c}A^*y-\frac{1}{c}\n h(x)\right)\right\|^2\right)-x\\
v=v(t,x,z,y) & =\argmin_{b\in\mathcal{G}}\left(G(t,b)+\frac{c}{2}\left\|b-\left(\frac{1}{c}M_2(t)z+A(\g u+x)+\frac{1}{c}y\right)\right\|^2\right)-z\\
& = \prox\nolimits_{\frac{1}{c}G(t,\cdot)}\left(\frac{1}{c}M_2(t)z+A(\g u+x)+\frac{1}{c}y\right)-z,\\
w=w(t,x,z,y) & = c A(u+x)-c (v+z).
\end{align*}
The existence and uniqueness of a strong global solution follows according to the Cauchy-Lipschitz-Picard Theorem, if we show: (1) that $\G(t,\cdot,\cdot,\cdot)$ is $L(t)$-Lipschitz continuous for every $t\in [0,+\infty)$ 
and the Lipschitz constant as a function of time has the property that $L(\cdot)\in L^1_{loc}([0,+\infty), \R)$; (2) that $\G(\cdot,x,z,y)\in L^1_{loc}([0,+\infty),\mathcal{H}\times \mathcal{G}\times \mathcal{G})$ for every $(x,z,y)\in \mathcal{H}\times \mathcal{G}\times \mathcal{G}$.

(1) Let $t\in [0,+\infty)$ be fixed and consider $(x,z,y),(\ol x,\ol z,\ol y)\in \mathcal{H}\times \mathcal{G}\times \mathcal{G}$. We have
$$\|\G(t,x,z,y)-\G(t,\ol x,\ol z,\ol y)\|=\sqrt{\|u-\ol u\|^2+\|v-\ol v\|^2+\|w-\ol w\|^2},$$
where (see Lemma \ref{lipschitz1})
\begin{align*}
u-\ol u = & \ \argmin_{a\in\mathcal{H}}\left(F(t,a)+\frac{c}{2}\left\|a-\left(\frac{1}{c}M_1(t)x+A^*z-\frac{1}{c}A^*y-\frac{1}{c}\n h(x)\right)\right\|^2\right)\\
 & - \argmin_{a\in\mathcal{H}}\left(F(t,a)+\frac{c}{2}\left\|a-\left(\frac{1}{c}M_1(t)\ol x+A^*\ol z-\frac{1}{c}A^*\ol y-\frac{1}{c}\n h(\ol x)\right)\right\|^2\right)+\ol x-x\\
= & \ S{_t}\left(\frac{1}{c}M_1(t)x+A^*z-\frac{1}{c}A^*y-\frac{1}{c}\n h(x)\right)-S{_t}\left(\frac{1}{c}M_1(t)\ol x+A^*\ol z-\frac{1}{c}A^*\ol y-\frac{1}{c}\n h(\ol x)\right)\\
& +\ol x-x.
\end{align*}
Hence,
\begin{align*}
\|u-\ol u\|^2 \le & \ 2\left\|S{_t}\left(\frac{M_1(t)}{c}x+A^*z-\frac{A^*}{c}y-\frac{1}{c}\n h(x)\right)-S{_t}\left(\frac{M_1(t)}{c}\ol x+A^*\ol z-\frac{A^*}{c}\ol y-\frac{1}{c}\n h(\ol x)\right)\right\|^2\\
& + 2\|\ol x-x\|^2.
\end{align*}
Using Lemma \ref{lipschitz1} and taking into account that $(Cstrong)$ is fulfilled, which means that the Lipschitz constant of the operator $S_t$ is $\frac{c}{\a}$, it follows
\begin{align*} 
 \|u-\ol u\|^2 \le & \ 2\frac{c^2}{\a^2}\left\|\frac{1}{c}M_1(t)(x-\ol x)+A^*(z-\ol z)-\frac{1}{c}A^*(y-\ol y)-\frac{1}{c}(\n h(x)-\n h(\ol x))\right\|^2+2\|\ol x-x\|^2\\
 \le & \ 2 \frac{c^2}{\a^2} \left(4\frac{\|M_1(t)\|^2}{c^2}\|x- \ol x\|^2+4\|A\|^2\|z-\ol z\|^2+4\frac{\|A\|^2}{c^2}\|y-\ol y\|^2+\frac{4}{c^2}\|\n h(x)-\n h(\ol x)\|^2\right)\\
 & + 2\|x- \ol x\|^2 \\
   \le & \ 2\left(\frac{4\|M_1(t)\|^2+4L_h^2}{\a^2}+1\right)\|x- \ol x\|^2+\frac{8c^2}{\a^2} \|A\|^2\|z-\ol z\|^2+\frac{8}{\a^2}\|A\|^2\|y-\ol y\|^2.
 \end{align*}
By taking into account the nonexpansiveness of the proximal operator and that $\gamma \in [0,1]$, it also follows
\begin{align*}
\|v-\ol v\|^2 \leq & \ 2 \left\|\prox\nolimits_{\frac{1}{c}G(t,\cdot)}\!\!\left(\!\frac{1}{c}M_2(t)z+A(\g u+x)+\frac{1}{c}y\right)\!-\!\prox\nolimits_{\!\frac{1}{c}G(t,\cdot)}\!\!\left(\frac{1}{c}M_2(t)\ol z+A(\g\ol u+\ol x)+\frac{1}{c}\ol y\right)\!\right\|^2\\
& + 2\|z-\ol z\|^2 \\
\leq & \ 2\left\|\frac{1}{c}M_2(t)(z-\ol z)+A(\g(u-\ol u)+x-\ol x)+\frac{1}{c}(y-\ol y)\right\|^2+2\|z-\ol z\|^2 \\
\leq & \ \frac{8\|M_2(t)\|^2}{c^2}\|z-\ol z\|^2+8\g^2\|A\|^2\|u-\ol u\|^2+8\|A\|^2\|x-\ol x\|^2+\frac{8}{c^2}\|y-\ol y\|^2+2\|z-\ol z\|^2 \\
\leq & \ 8 \|A\|^2 \left(\frac{8\|M_1(t)\|^2+8L_h^2}{\a^2}+3\right)\|x- \ol x\|^2 + \left(\frac{64c^2}{\alpha^2} \|A\|^4 + \frac{8\|M_2(t)\|^2}{c^2}+2\right)\|z-\ol z\|^2\\
& + \left(\frac{64}{\alpha^2} \|A\|^4 + \frac{8}{c^2}\right)\|y-\ol y\|^2.
\end{align*}
Finally,
\begin{align*}
\|w-\ol w\|^2= & \ \|c A(u-\ol u+x-\ol x)-c (v-\ol v+z-\ol z)\|^2\\
\leq & \ 4c^2\|A\|^2\|u-\ol u\|^2+4c^2\|A\|^2\|x-\ol x\|^2+4c^2\|v-\ol v\|^2+4c^2\|z-\ol z\|^2\\
\leq & \ 36c^2 \|A\|^2 \left(\frac{8\|M_1(t)\|^2+8L_h^2}{\a^2} + 3\right) \|x- \ol x\|^2\\
& + 4 \left(\frac{72c^4}{\alpha^2} \|A\|^4 + 8 \|M_2(t)\|^2 + 3c^2\right)\|z-\ol z\|^2 + 32 \left(\frac{9c^2}{\alpha^2} \|A\|^4 + 1\right)\|y-\ol y\|^2.
\end{align*}
Consequently,
\begin{align*}
\|\G(t,x,z,y)-\G(t,\ol x,\ol z,\ol y)\| \le & \  \sqrt{L_1(t)\|x-\ol x\|^2+L_2(t)\|z-\ol z\|^2+L_3(t)\|y-\ol y\|^2}\|\\
\le & \ \sqrt{L_1(t)+L_2(t)+L_3(t)}\sqrt{\|x-\ol x\|^2+\|z-\ol z\|^2+\|y-\ol y\|^2} \\
= & \ L(t) \|(x,z,y)- (\ol x, \ol z, \ol y)\|,
\end{align*}
where
$$L(t) =  \sqrt{L_1(t)+L_2(t)+L_3(t)}$$
and 
\begin{align*}
L_1 (t) = & \ 2\left(\frac{4\|M_1(t)\|^2+4L_h^2}{\a^2}+1\right) + 8 \|A\|^2 \left(\frac{8\|M_1(t)\|^2+8L_h^2}{\a^2}+3\right) \\
& + 36c^2 \|A\|^2 \left(\frac{8\|M_1(t)\|^2+8L_h^2}{\a^2} + 3\right),\\
L_2(t) = & \ \frac{8c^2}{\a^2} \|A\|^2 + \left(\frac{64c^2}{\alpha^2} \|A\|^4 + \frac{8\|M_2(t)\|^2}{c^2}+2\right) + 4 \left(\frac{72c^4}{\alpha^2} \|A\|^4 + 8 \|M_2(t)\|^2 + 3c^2\right),\\
L_3(t) = & \ \frac{8}{\a^2}\|A\|^2 + \left(\frac{64}{\alpha^2} \|A\|^4 + \frac{8}{c^2}\right) + 32 \left(\frac{9c^2}{\alpha^2} \|A\|^4 + 1\right),
\end{align*}
which means that $\G(t,\cdot,\cdot,\cdot)$ is $L(t)$-Lipschitz continuous. Since $M_1 \in L^1_{loc}([0,+\infty), {\cal H})$ and $M_2 \in L^1_{loc}([0,+\infty), {\cal G})$, it is obvious that $L(\cdot)\in L^1_{loc}([0,+\infty), \R)$.

(2) Now we will show that  $\G(\cdot,x,z,y)\in L^1_{loc}([0,+\infty),\mathcal{H}\times \mathcal{G}\times \mathcal{G})$ for every $(x,z,y)\in \mathcal{H}\times \mathcal{G}\times \mathcal{G}$. Let $(x,z,y)\in\mathcal{H}\times \mathcal{G}\times \mathcal{G}$ be fixed and $T>0$. We have
$$\int_0^T\|\G(t,x,z,y)\|dt=\int_0^T\sqrt{\|u(t,x,z,y)\|^2+\|v(t,x,z,y)\|^2+\|w(t,x,z,y)\|^2}dt.$$
By Lemma \ref{lipschitz2} and taking into account that $\alpha(t) = \alpha >0$ for every $t \in [0,+\infty)$ and  $\gamma \in [0,1]$, we have for every $t \in [0,+\infty)$ that
\begin{align*}
\|u(t,x,z,y)\|^2\le & \ 2\|u(t,x,z,y)-u(0,x,z,y)\|^2+2\|u(0,x,z,y)\|^2\\
\le & \ \frac{2\|u(0,x,z,y)\|^2}{\a^2} \|M_1(t)-M_1(0)\|^2+2\|u(0,x,z,y)\|^2,
\end{align*}
\begin{align*}
\|v(t,x,z,y)\|^2\le & \ 2\|v(t,x,z,y)-v(0,x,z,y)\|^2+2\|v(0,x,z,y)\|^2\\
\le & \ \frac{4 \|v(0,x,z,y)\|^2}{c^2} \|M_2(t)-M_2(0)\|^2+  \frac{4 \|A\|^2\|u(0,x,z,y)\|^2}{\a^2} \|M_1(t)-M_1(0)\|^2\\
& +  2\|v(0,x,z,y)\|^2
\end{align*}
and
\begin{align*}
\|w(t,x,z,y)\|^2= & \ c^2\|(Au(t,x,z,y)+x)-(v(t,x,z,y)+z)\|^2\\
 \le & \  3c^2(\|A\|^2\|u(t,x,z,y)\|^2+\|v(t,x,z,y)\|^2+\|Ax-z\|^2)\\
\le & \ \frac{18 c^2 \|A\|^2\|u(0,x,z,y)\|^2}{\a^2}  \|M_1(t)-M_1(0)\|^2 +12 \|v(0,x,z,y)\|^2 \|M_2(t)-M_2(0)\|^2\\
& \ + 3c^2 \big(2 \|A\|^2 \|u(0,x,z,y)\|^2 + 2\|v(0,x,z,y)\|^2 + \|x-z\|^2 \big).
\end{align*}
Since  $M_1 \in L^1_{loc}([0,+\infty), {\cal H})$ and $M_2 \in L^1_{loc}([0,+\infty), {\cal G})$, it follows that the integral
$$\int_0^T\|\G(t,x,z,y)\|dt$$ 
exists and it is finite, in other words,
$\G(\cdot,x,z,y)\in L^1_{loc}([0,+\infty),\mathcal{H}\times \mathcal{G}\times \mathcal{G})$. 

Consequently, the dynamical system \eqref{dysy1} has a unique locally absolutely continuous solution, which means that the dynamical system \eqref{ADMMdysy} has a unique strong global solution.
\end{proof}

\section{Some technical results}\label{sectech}

In this section we will prove some technical results which will be useful in the asymptotic analysis of the dynamical system \eqref{ADMMdysy-subdiff}. We endow the real linear space $\mathcal{L}(\mathcal{H}):=\{A:\mathcal{H}\To\mathcal{H}:A\mbox{ is linear and continuous}\}$ with the norm
 $$\|A\|=\sup_{\|x\|\le 1}\|Ax\|.$$
If $A\in\mathcal{L}(\mathcal H)$ is self-adjoint, then it holds (see \cite[Lemma 3.2.4 iv)]{zimmer})
\begin{equation*}
\|A\|=\sup_{\|x\|\le 1}|\<Ax,x\>|.
\end{equation*}
\begin{definition}\label{uniformderiv} We say that the map $M:[0,+\infty)\To \mathcal{L}(\mathcal{H}),\,t\To M(t),$ is  derivable at $t_0\in[0,+\infty)$, if the limit
$$\lim_{h\To 0}\frac{M(t_0+h)-M(t_0)}{h}$$
taken with respect to the norm topology of  $\mathcal{L}(\mathcal{H})$ exists. In this case we denote by $\dot{M}(t_0) \in \mathcal{L}(\mathcal{H})$ the value of this limit.
\end{definition}
If $\dot{M}(t_0)$ exists, for $t_0\in[0,+\infty)$, then one can easily see that
$$\dot{M}(t_0)x=\lim_{h\To 0}\frac{M(t_0+h)x-M(t_0)x}{h} \ \mbox{ for every }x\in\mathcal{H}.$$
According to Remark \ref{rem-abs-cont}, if $M$ is locally absolutely continuous then
$\dot{M}(t)$  exists for almost every $t\in[0,+\infty).$

Assume now that $M(t)\in\mathcal{L}(\mathcal H)$ is self-adjoint for every $t\in[0,+\infty)$ and that it is derivable at $t_0\in[0,+\infty)$. For all $x,u \in {\cal H}$ we have
\begin{align*}
\<\dot{M}(t_0)x,u\> & =\left\<\lim_{h\To 0}\frac{M(t_0+h)x-M(t_0)x}{h},u\right\>=\lim_{h\To 0}\left\<\frac{M(t_0+h)x-M(t_0)x}{h},u\right\>\\
& = \lim_{h\To 0}\left\<x,\frac{M(t_0+h)u-M(t_0)u}{h}\right\>=\<x,\dot{M}(t_0)u\>,
\end{align*}
which shows that $\dot{M}(t_0)$ is also self-adjoint.

\begin{lemma}\label{compderiv} Let $M:[0,+\infty)\To \mathcal{L}(\mathcal{H}),\,t\To M(t),$ be derivable at $t_0\in[0,+\infty)$,  and  let the maps $x,y:[0,+\infty)\To\mathcal{H}$ be also derivable at $t_0.$ Then the real function
$t\To \<M(t)x(t),y(t)\>$ is derivable at $t_0$ and one has
\begin{equation*}
\frac{d}{dt}\<M(t)x(t),y(t)\>\big|_{t=t_0}=\<\dot{M}(t_0)x(t_0),y(t_0)\>+\<M(t_0)\dot{x}(t_0),y(t_0)\>+\<M(t_0){x}(t_0),\dot{y}(t_0)\>.
\end{equation*}
\end{lemma}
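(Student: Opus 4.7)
My approach is to use the classical three-term splitting of the difference quotient, exploiting that the hypothesis on $M$ is norm-derivability in $\mathcal{L}(\mathcal{H})$ (not merely strong derivability), which gives us uniform estimates strong enough to pass to the limit. Since $x$ and $y$ are derivable at $t_0$, they are in particular continuous at $t_0$, so $\|x(t_0+h)\|$ and $\|y(t_0+h)\|$ stay bounded for $h$ in a neighbourhood of $0$; also, $M(t_0) \in \mathcal{L}(\mathcal{H})$ is bounded. These are the ingredients I will need.

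First I would write, for $h \neq 0$ small enough,
\begin{align*}
& \frac{1}{h}\bigl(\langle M(t_0+h)x(t_0+h),y(t_0+h)\rangle - \langle M(t_0)x(t_0),y(t_0)\rangle \bigr) \\
& \quad = \left\langle \frac{M(t_0+h)-M(t_0)}{h}\, x(t_0+h),\, y(t_0+h) \right\rangle \\
& \qquad + \left\langle M(t_0)\,\frac{x(t_0+h)-x(t_0)}{h},\, y(t_0+h) \right\rangle + \left\langle M(t_0)x(t_0),\, \frac{y(t_0+h)-y(t_0)}{h} \right\rangle.
\end{align*}
The plan is to show that these three terms converge respectively to $\langle \dot M(t_0) x(t_0), y(t_0)\rangle$, $\langle M(t_0)\dot x(t_0), y(t_0)\rangle$ and $\langle M(t_0)x(t_0), \dot y(t_0)\rangle$, which gives exactly the claimed formula.

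The second and third terms are the easy ones: $M(t_0)$ is a bounded linear operator, $\tfrac{1}{h}(x(t_0+h)-x(t_0)) \to \dot x(t_0)$, and $y(t_0+h) \to y(t_0)$ by continuity of $y$ at $t_0$, so Cauchy--Schwarz plus the continuity of the inner product in each argument dispatch them. The main obstacle is the first term, and here the operator-norm derivability of $M$ is essential. I would further split it as
\begin{equation*}
\left\langle \left[\frac{M(t_0+h)-M(t_0)}{h} - \dot M(t_0)\right] x(t_0+h), y(t_0+h) \right\rangle + \bigl\langle \dot M(t_0) x(t_0+h),\, y(t_0+h) \bigr\rangle.
\end{equation*}
By Cauchy--Schwarz the first piece is bounded by $\bigl\|\tfrac{M(t_0+h)-M(t_0)}{h} - \dot M(t_0)\bigr\|\cdot \|x(t_0+h)\|\cdot \|y(t_0+h)\|$, which tends to $0$ because the operator norm difference vanishes (by Definition \ref{uniformderiv}) while the two norm factors stay bounded on a neighbourhood of $t_0$. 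For the second piece, $\dot M(t_0) \in \mathcal{L}(\mathcal{H})$ is bounded, hence continuous, so $\dot M(t_0)x(t_0+h) \to \dot M(t_0)x(t_0)$, and combined with $y(t_0+h) \to y(t_0)$ this yields the limit $\langle \dot M(t_0) x(t_0), y(t_0)\rangle$. Summing the three contributions gives the stated derivative, which also establishes the derivability of $t \mapsto \langle M(t)x(t),y(t)\rangle$ at $t_0$.
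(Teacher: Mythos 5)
Your proof is correct and follows essentially the same route as the paper: the standard product-rule argument via splitting the difference quotient, using norm-derivability of $M$, boundedness of $M(t_0)$ and $\dot M(t_0)$, and continuity of $x$ and $y$ at $t_0$. The only difference is organizational -- the paper first differentiates $t\mapsto M(t)x(t)$ in $\mathcal{H}$ and then invokes the product rule for the scalar product, whereas you perform a direct three-term telescoping of the scalar difference quotient, which makes the boundedness estimates explicit but proves the same thing in the same way.
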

\begin{proof} We have
\begin{align*}
\frac{d}{dt}M(t)x(t)\big|_{t=t_0} & = \lim_{h\To 0}\frac{M(t_0+h)x(t_0+h)-M(t_0)x(t_0)}{h}\\
& = \lim_{h\To 0}M(t_0+h)\left(\frac{x(t_0+h)-x(t_0)}{h}\right)+\lim_{h\To 0}\frac{M(t_0+h)x(t_0)-M(t_0)x(t_0)}{h}\\
& = M(t_0)\dot{x}(t_0)+\dot{M}(t_0)x(t_0).
\end{align*}
The derivation formula of the scalar product leads to the desired conclusion
\begin{align*}
\frac{d}{dt}\<M(t)x(t),y(t)\>\big|_{t=t_0} & =\left<\frac{d}{dt}M(t)x(t)\big|_{t=t_0},y(t_0)\right\>+\<M(t_0)x(t_0),\dot{y}(t_0)\>\\
& = \<M(t_0)\dot{x}(t_0),y(t_0)\>+\<\dot{M}(t_0)x(t_0),y(t_0)\>+\<M(t_0){x}(t_0),\dot{y}(t_0)\>.
\end{align*}
\end{proof}

The main result of this section follows.

\begin{lemma}\label{abscont}  Assume that $(Cstrong)$ holds and that the maps $M_1:[0,+\infty)\To S_+(\mathcal{H})$ and $M_2:[0,+\infty)\To S_+(\mathcal{G})$ are  locally absolutely continuous. For a given starting point $(x^0,z^0,y^0)\in {\mathcal H}\times{\mathcal G}\times{\mathcal G}$, let  $(x,z,y):[0,+\infty)\To {\mathcal H}\times{\mathcal G}\times{\mathcal G}$ be the unique strong global solution of the dynamical system \eqref{ADMMdysy-subdiff}. Then 
$$t\To(\dot{x}(t),\dot{z}(t),\dot{y}(t))$$ is locally absolutely continuous, hence $(\ddot{x}(t),\ddot{z}(t),\ddot{y}(t))$ exists for almost every $t\in [0,+\infty)$. 

In addition, if $\sup_{t\ge 0}\|M_1(t)\|<+\infty$ and $\sup_{t\ge 0}\|M_2(t)\|<+\infty$, then there exists $L>0$ such that
$$
\|\ddot{x}(t)\|+\|\ddot{z}(t)\|+\|\ddot{y}(t)\|\le L\big(\|\dot{x}(t)\|+\|\dot{z}(t)\|+\|\dot{y}(t)\|+\|\dot{M_1}(t)\|\|\dot{x}(t)\|+\|\dot{M_2}(t)\|\|\dot{z}(t)\|\big)
$$
 for almost  every $t\in[0,+\infty).$
\end{lemma}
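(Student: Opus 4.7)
The plan is to start from the reformulation $\dot U(t)=\Gamma(t,U(t))$ used in the proof of Theorem \ref{uniq}, where $U(t)=(x(t),z(t),y(t))$ and the first two coordinates of $\Gamma(t,x,z,y)$ are, by construction, $R_{(x,z,y)}(t)$ and $Q_{(x,z,y)}(t)$ of Lemma \ref{lipschitz2}. Since the strong global solution $U$ is already locally absolutely continuous, to obtain local absolute continuity of $\dot U=\Gamma(\cdot,U(\cdot))$ it will suffice to bound $\|\Gamma(s,U(s))-\Gamma(r,U(r))\|$ by quantities that are themselves locally absolutely continuous in the time variable.

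I would split using the triangle inequality,
$$\|\Gamma(s,U(s))-\Gamma(r,U(r))\|\le\|\Gamma(s,U(s))-\Gamma(r,U(s))\|+\|\Gamma(r,U(s))-\Gamma(r,U(r))\|.$$
For the first term, freezing the data at $(x,z,y)=U(s)$ and invoking Lemma \ref{lipschitz2} will yield a bound of the form $C_1(s,r)\|M_1(s)-M_1(r)\|+C_2(s,r)\|M_2(s)-M_2(r)\|$, where $C_1,C_2$ depend on $\|R_{U(s)}(r)\|$ and $\|Q_{U(s)}(r)\|$. On any compact interval $[0,T]$ these constants are uniformly bounded because $U$, $M_1$, $M_2$ are continuous there and $R_{(\cdot)}(r), Q_{(\cdot)}(r)$ are Lipschitz in $(x,z,y)$ by the estimates already derived in Theorem \ref{uniq}. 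The second term is controlled by $L(r)\|U(s)-U(r)\|$ via Theorem \ref{uniq}, with $L(r)$ locally bounded. Combined with the local absolute continuity of $M_1,M_2$ and of $U$, this will give local absolute continuity of $\dot U$, whence $\ddot U$ exists almost everywhere.

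For the quantitative estimate, I would work at a point $t$ where $\ddot U(t)$, $\dot M_1(t)$, $\dot M_2(t)$ simultaneously exist, apply the same splitting with $r=t$ and $s=t+h$, divide by $h$, and take the $\limsup$ as $h\to 0$. The second piece contributes $L\|\dot U(t)\|$ by Theorem \ref{uniq}, where now $L$ is a global constant since $\sup_{t\ge 0}\|M_1(t)\|$ and $\sup_{t\ge 0}\|M_2(t)\|$ are finite. For the first piece, Lemma \ref{lipschitz2}(i) applied with $(x,z,y)=U(t+h)$ gives
$$\|R_{U(t+h)}(t+h)-R_{U(t+h)}(t)\|\le \frac{\|R_{U(t+h)}(t)\|}{\alpha}\|M_1(t+h)-M_1(t)\|.$$
The key observation is that $R_{U(t)}(t)=\dot x(t)$ by the first line of \eqref{ADMMdysy-subdiff}, and by the joint continuity of $R_{(\cdot)}(\cdot)$ (Lipschitz in $(x,z,y)$ via Theorem \ref{uniq}, Lipschitz in $t$ via Lemma \ref{lipschitz2}) one has $R_{U(t+h)}(t)\to \dot x(t)$ as $h\to 0$. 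Dividing by $h$ and taking the $\limsup$ produces the term $\tfrac{1}{\alpha}\|\dot M_1(t)\|\|\dot x(t)\|$. Applying Lemma \ref{lipschitz2}(ii) and using $Q_{U(t)}(t)=\dot z(t)$ yields the $\|\dot M_2\|\|\dot z\|$ contribution plus an additional $\|\dot M_1\|\|\dot x\|$ contribution, while the $y$-coordinate, being the linear combination $cA(u+x)-c(v+z)$, absorbs all such terms through the triangle inequality. Collecting the pieces produces the desired inequality with a global constant $L$.

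The main technical obstacle is obtaining the $\|\dot x\|$ and $\|\dot z\|$ factors (rather than $\|x\|$ and $\|z\|$) in the final bound: this hinges crucially on the identifications $R_{U(t)}(t)=\dot x(t)$ and $Q_{U(t)}(t)=\dot z(t)$ provided by the dynamical system, without which Lemma \ref{lipschitz2} would only deliver a weaker estimate involving $\|R_{U(s)}(r)\|$ and $\|Q_{U(s)}(r)\|$, quantities that need not decay with $t$. A secondary care-point is the rigorous passage to the limit $h\to 0$ inside the norm, which is justified by the joint continuity of $R$ and $Q$ coming from Theorem \ref{uniq} and Lemma \ref{lipschitz2}.
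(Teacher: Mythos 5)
Your proposal is correct and follows essentially the same route as the paper: the splitting of $\|\Gamma(s,U(s))-\Gamma(r,U(r))\|$ into a time-variation part controlled by Lemma \ref{lipschitz2} and a point-variation part controlled by the Lipschitz estimates of Theorem \ref{uniq}, the identifications $R_{U(t)}(t)=\dot x(t)$, $Q_{U(t)}(t)=\dot z(t)$, and the division by $h$ with $h\to 0$ are exactly the paper's argument. The only cosmetic difference is that you freeze the point at $U(t+h)$ rather than at $U(t)$, which forces the extra (correctly justified) continuity step $R_{U(t+h)}(t)\to\dot x(t)$; freezing at the earlier time, as the paper does, makes the coefficient equal to $\|\dot x(t)\|$ outright.
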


\begin{proof}
Let $T>0$ be fixed. We will use the same notations as in the proof of Theorem \ref{uniq}. Let $t,r\in [0,T]$ be fixed. We have
\begin{align*}
& \ \|\dot{U}(t)-\dot{U}(r)\|  =  \|\G(t,U(t))-\G(r,U(r))\| \le \!\|\G(t,U(t))-\G(t,U(r))\| +\|\G(t,U(r))-\G(r,U(r))\|\\
\leq & \ \|u(t,x(t),z(t),y(t)) - u(t, x(r), z(r), y(r)) \|+ \|v(t,x(t),z(t),y(t)) - v(t, x(r), z(r), y(r)) \| \\
& + \|w(t,x(t),z(t),y(t)) - w(t, x(r), z(r), y(r)) \| \\
& + \|u(t,x(r),z(r),y(r)) - u(r, x(r), z(r), y(r)) \| + \|v(t,x(r),z(r),y(r)) - v(r, x(r), z(r), y(r)) \| \\
& +\|w(t,x(r),z(r),y(r)) - w(r, x(r), z(r), y(r)) \|.
\end{align*}
Since
\begin{align*}
u(t,x(t),z(t),y(t)) - u(t, x(r), z(r), y(r)) = & \ S{_t}\left(\frac{1}{c}M_1(t)x(t)+A^*z(t)-\frac{1}{c}A^*y(t)-\frac{1}{c}\n h(x(t))\right)\\
 & - S{_t}\left(\frac{1}{c}M_1(t)x(r)+A^*z(r)-\frac{1}{c}A^*y(r)-\frac{1}{c}\n h(x(r))\right)\\
& - x(t) + x(r),
\end{align*}
according to Lemma 1, we get
\begin{align*}
& \ \|u(t,x(t),z(t),y(t)) - u(t, x(r), z(r), y(r)) \|  \\
\le & \left(\frac{\|M_1(t)\|}{\alpha}+\frac{L_h}{\alpha}+1\right)\|x(t)-x(r)\|+\frac{c}{\a}\|A\|\|z(t)-z(r)\|+\frac{\|A\|}{\alpha}\|y(t)-y(r)\|.
\end{align*}
Since $t\To\|M_1(t)\|$ is bounded on $[0,T],$ there exists $L_1:=L_1(T) > 0$ such that
\begin{equation}\label{useu}
 \|u(t,x(t),z(t),y(t)) - u(t, x(r), z(r), y(r)) \| \le L_1(\|x(t)-x(r)\|+\|z(t)-z(r)\|+\|y(t)-y(r)\|).
\end{equation}
Similarly, since
\begin{align*}
& \ v(t,x(t),z(t),y(t)) - v(t, x(r), z(r), y(r))\\
= & \ \prox\nolimits_{\frac{1}{c}G(t,\cdot)}\left(\frac{1}{c}M_2(t)z(t)+A(\g u (t, x(t), z(t), y(t)) +x(t))+\frac{1}{c}y(t)\right)\\
& - \prox\nolimits_{\frac{1}{c}G(t,\cdot)}\left(\frac{1}{c}M_2(t)z(r)+A(\g u (t, x(r), z(r), y(r))+x(r))+\frac{1}{c}y(r)\right) - z(t) + z(r),
\end{align*}
by the nonexpansiveness of the proximal operator we get
\begin{align*}
& \ \|v(t,x(t),z(t),y(t)) - v(t, x(r), z(r), y(r)) \|  \\
\le & \left(\frac{\|M_2(t)\|}{c}+1\right)\|z(t)-z(r)\|+\|A\|\|x(t)-x(r)\|+\frac{1}{c}\|y(t)-y(r)\|\\
& +\g\|A\|\|u(t,x(t),z(t),y(t)) - u(t, x(r), z(r), y(r)) \|.
\end{align*}
Since $t\To\|M_2(t)\|$ is bounded on $[0,T],$  by taking into consideration \eqref{useu},
one can easily see that there exists $L_2:=L_2(T) > 0$ such that
\begin{equation}\label{usev}
\|v(t,x(t),z(t),y(t)) - v(t, x(r), z(r), y(r)) \| \le L_2(\|x(t)-x(r)\|+\|z(t)-z(r)\|+\|y(t)-y(r)\|).
\end{equation}
Further, by using \eqref{useu} and \eqref{usev}, we get
\begin{align*}
& \ \|w(t,x(t),z(t),y(t)) - w(t, x(r), z(r), y(r)) \|  \\
\leq & \ c \| A(u(t,x(t), z(t), y(t))-u(t, x(r), z(r), y(r)) + x(t)-x(r))\| \\ 
& + c \|v(t,x(t), z(t), y(t))-v(t, x(r), z(r), y(r)) +z(t)-z(r)\|\\
\leq & \ c(\|A\|L_1+\|A\|+L_2)\|x(t)-x(r)\|+c (\|A\|L_1+L_2+1)\|z(t)-z(r)\|\\
& + c(\|A\|L_1+L_2)\|y(t)-y(r)\|.
\end{align*}
Hence, there exists $L_3:=  c(\|A\|L_1+\|A\|+L_2 + 1) > 0$ such that
\begin{equation}\label{usew}
\|w(t,x(t),z(t),y(t)) - w(t, x(r), z(r), y(r)) \| \le L_3(\|x(t)-x(r)\|+\|z(t)-z(r)\|+\|y(t)-y(r)\|).
\end{equation}
Using now Lemma \ref{lipschitz2} (i), we get
\begin{equation}\label{useu11}
\begin{array}{rl}
\|u(t,x(r),z(r),y(r)) - u(r, x(r), z(r), y(r))\| = & \|R_{(x(r),z(r),y(r))}(t)-R_{(x(r),z(r),y(r))}(r)\|\\
\leq & \frac{\|R_{(x(r),z(r),y(r))}(r)\|}{\a} \|M_1(t)-M_1(r)\|.
\end{array}
\end{equation}
Since  $r \mapsto S_r$ and $\n h$ are Lipschitz continuous and $x,z,y$ and $M_1$ are absolutely continuous,  the map
$$r \mapsto R_{(x(r),z(r),y(r))}(r)  = S{_r}\left(\frac{1}{c}M_1(r)x(r)+A^*z(r)-\frac{1}{c}A^*y(r)-\frac{1}{c}\n h(x(r))\right)-x(r)$$
is bounded on $[0,T].$ Consequently, there exists $L_4:=L_4(T)>0$ such that
\begin{equation}\label{useu1}
\|u(t,x(r),z(r),y(r)) - u(r, x(r), z(r), y(r))\| \le L_4\|M_1(t)-M_1(r)\|.
\end{equation}
Similarly, using this time Lemma \ref{lipschitz2} (ii), we get
\begin{equation}\label{usev11}
\begin{array}{rl}
& \ \|v(t,x(r),z(r),y(r)) - v(r, x(r), z(r), y(r))\| =  \|Q_{(x(r),z(r),y(r))}(t)-Q_{(x(r),z(r),y(r))}(r)\|\\
\leq & \ \frac{\|A\|\|R_{(x(r),z(r),y(r))}(r)\|}{\a} \|M_1(t)-M_1(r)\| +  \frac{\|Q_{(x(r),z(r),y(r))}(r)\|}{c} \|M_2(t)-M_2(r)\|.
\end{array}
\end{equation}
Since the proximal operator is nonexpansive and $x,z,y$ and $M_2$ are absolutely continuous, the map
$$r \mapsto Q_{(x(r),z(r),y(r))}(r)  = \prox\nolimits_{\frac{1}{c}G(r,\cdot)}\left(\frac{1}{c}M_2(r)z(r)+A(\g u (r, x(r), z(r), y(r)) +x(r))+\frac{1}{c}y(r)\right)-z(r)$$
is bounded on $[0,T].$ Consequently, there exists $L_5:=L_5(T) >0$ such that
\begin{equation}\label{usev1}
\|v(t,x(r),z(r),y(r)) - v(r, x(r), z(r), y(r))\| \le L_5(\|M_1(t)-M_1(r)\|+\|M_2(t)-M_2(r)\|).
\end{equation}
Further, by using \eqref{useu1} and \eqref{usev1}, we get
\begin{align*}
& \ \|w(t,x(r),z(r),y(r)) - w(r, x(r), z(r), y(r)) \|  \\
\leq & \ c \| A(u(t,x(r), z(r), y(r))-u(r, x(r), z(r), y(r)) )\| + c \|v(t,x(r), z(r), y(r))-v(r, x(r), z(r), y(r))\|\\
\leq & \ c(\|A\|L_4+L_5)\|M_1(t)-M_1(r)\|+cL_5\|M_2(t)-M_2(r)\|
\end{align*}
Consequently, there exists $L_6:= c(\|A\|L_4+L_5) >0$ such that
\begin{equation}\label{usew1}
\|w(t,x(r),z(r),y(r)) - w(r, x(r), z(r), y(r)) \|  \le L_6(\|M_1(t)-M_1(r)\| + \|M_2(t)-M_2(r)\|).
\end{equation}

Summing the relations \eqref{useu}-\eqref{usew1} we obtain that there exists $L_7 >0$ such that
\begin{align*}
& \ \|\dot{U}(t)-\dot{U}(r) \|  \\
\leq & \ L_7(\|x(t)-x(r)\|+\|z(t)-z(r)\|+\|y(t)-y(r)\|+\|M_1(t)-M_1(r)\|+\|M_2(t)-M_2(r)\|).
\end{align*}

Let be $\e>0$. Since the maps $x, z, y, M_1$ and $M_2$ are absolutely continuous on $[0,T]$, there exists $\eta >0$ such that for any finite family of intervals $I_k=(a_k,b_k)\subseteq [0,T]$ such that
for any subfamily of disjoint intervals $I_j$ with $\sum_j|b_j-a_j| < \eta$ it holds 
$$\sum_j\|{x}(b_j)-{x}(a_j)\| < \frac{\varepsilon}{5L_7},\,\sum_j\|{z}(b_j)-{z}(a_j)\| < \frac{\varepsilon}{5L_7},\,\sum_j\|{y}(b_j)-{y}(a_j)\| < \frac{\varepsilon}{5L_7},$$
$$
\sum_j\|{M_1}(b_j)-{M_1}(a_j)\|< \frac{\varepsilon}{5L_7}\mbox{ and }\sum_j\|{M_2}(b_j)-{M_2}(a_j)\|< \frac{\varepsilon}{5L_7}.$$
 Consequently,
$$\sum_j\|\dot{U}(b_j)-\dot{U}(a_j)\|  < \varepsilon,$$
hence $\dot{U}(\cdot)=(\dot{x}(\cdot),\dot{z}(\cdot),\dot{y}(\cdot))$ is absolutely continuous on $[0,T]$. This proves that the second order derivatives $\ddot{x},\ddot{z},\ddot{y}$ exist almost everywhere on $[0,+\infty).$

We come now to the proof of the second statement and assume to this end that $\sup_{t\ge 0}\|M_1(t)\|<+\infty$ and $\sup_{t\ge 0}\|M_2(t)\|<+\infty$. Under these assumption, $L_1, L_2$ and $L_3$ appearing in \eqref{useu}, \eqref{usev} and \eqref{usew}, respectively, can be taken as being global constants,  that is, \eqref{useu}-\eqref{usew} hold for every $t,r\in[0,+\infty)$.

Since $R_{(x(r),z(r),y(r))}(r) = \dot{x}(r)$ and $Q_{(x(r),z(r),y(r))}(r) = \dot{z}(r)$ for every $r \in [0, +\infty)$, from \eqref{useu11} and \eqref{usev11} we get
$$\|u(t,x(r),z(r),y(r)) - u(r, x(r), z(r), y(r))\|  \leq \frac{\|\dot{x}(r)\|}{\a} \|M_1(t)-M_1(r)\|$$
and, respectively, 
$$ \|v(t,x(r),z(r),y(r)) - v(r, x(r), z(r), y(r))\| \leq  \frac{\|A\|\|\dot{x}(r)\|}{\a} \|M_1(t)-M_1(r)\| +  \frac{\|\dot{z}(r)\|}{c} \|M_2(t)-M_2(r)\|$$
for every $t,r\in[0,+\infty).$
Consequently, 
\begin{align*}
& \ \|w(t,x(r),z(r),y(r)) - w(r, x(r), z(r), y(r)) \|  \\
\leq & \ c  \|A\| \| u(t,x(r), z(r), y(r))-u(r, x(r), z(r), y(r))\| + c \|v(t,x(r), z(r), y(r))-v(r, x(r), z(r), y(r))\|\\
\leq & \ \frac{2 c \|A\|}{\alpha} \|\dot x(r)\|\|M_1(t)-M_1(r)\|+ \|\dot z(r) \|\|M_2(t)-M_2(r)\|
\end{align*}
for every $t,r\in[0,+\infty).$ This shows that there exists $L>0$ such that
\begin{align*}
& \|\dot{U}(t)-\dot{U}(r)\| \leq \\
& \frac{L}{\sqrt{3}}\!(\|x(t)-x(r)\|\!+\!\|z(t)-z(r)\|\!+\!\|y(t)-y(r)\|\!+\!\|\dot{x}(r)\|\|M_1(t)-\!M_1(r)\|\!+\!\|\dot{z}(r)\|\|M_2(t)-\!M_2(r)\|)
\end{align*}
for every $t,r\in[0,+\infty).$

Now we fix $r \in [0,+\infty)$ at which the second derivative of the trajectories exist and take in the above inequality $t = r+h$ for some $h >0$. This yields
\begin{align*}
& \|\dot{x}(r+h)-\dot{x}(r)\|+\|\dot{z}(r+h)-\dot{z}(r)\|+\|\dot{y}(r+h)-\dot{y}(r)\|) \leq \sqrt{3} \|\dot{U}(r+h)-\dot{U}(r)\| \\
\leq & \ L(\|x(r+h)-x(r)\|+\|z(r+h)-z(r)\|+\|y(r+h)-y(r)\|)\\
 & +  L(\|\dot{x}(r)\|\|M_1(r+h)-M_1(r)\|+\|\dot{z}(r)\|\|M_2(r+h)-M_2(r)\|).
\end{align*}
 After dividing in the above inequality by $h$ and letting $h\To 0$, we obtain
 \begin{equation*}
\|\ddot{x}(r)\|+\|\ddot{z}(r)\|+\|\ddot{y}(r)\|\le L(\|\dot{x}(r)\|+\|\dot{z}(r)\|+\|\dot{y}(r)\|+\|\dot{x}(r)\|\|\dot{M_1}(r)\|+\|\dot{z}(r)\|\|\dot{M_2}(r)\|).
\end{equation*}
This inequality holds for almost every $r\in[0,+\infty).$
\end{proof}

\section{Asymptotic analysis}\label{sec3}

In this section we will address the asymptotic behaviour of the trajectories generated by the dynamical system \eqref{ADMMdysy-subdiff}. At the beginning we will recall two results which will play a central role in the asymptotic analysis (see \cite[Lemma 5.1]{abbas-att-sv} and \cite[Lemma 5.2]{abbas-att-sv}, respectively).

\begin{lemma}\label{fejer-cont1} Suppose that $A:[0,+\infty)\rightarrow\R$ is locally absolutely continuous and bounded from below and that
there exists $B\in L^1([0,+\infty), \R)$ such that for almost every $t \in [0,+\infty)$ $$\frac{d}{dt}A(t)\leq B(t).$$
Then there exists $\lim_{t\rightarrow +\infty} A(t)\in\R$.
\end{lemma}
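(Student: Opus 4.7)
The plan is to reduce the statement to the standard fact that a bounded-from-below monotone function has a limit at infinity, by absorbing the positive part of the derivative into an integrable correction term.

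First, I would introduce the auxiliary function $C:[0,+\infty)\to\R$ defined by
\[
C(t) = A(t) - \int_0^t B(s)\,ds.
\]
Since $A$ is locally absolutely continuous by hypothesis and $t\mapsto \int_0^t B(s)\,ds$ is absolutely continuous on every $[0,T]$ (as the indefinite integral of an $L^1_{\mathrm{loc}}$ function), $C$ is locally absolutely continuous. Then for almost every $t\in [0,+\infty)$ we have
\[
\dot C(t) = \dot A(t) - B(t) \leq 0
\]
by the assumption $\dot A(t)\leq B(t)$ a.e. Because $C$ is locally absolutely continuous, it is the integral of its derivative, so $C$ is non-increasing on $[0,+\infty)$.

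Next, I would show that $C$ is bounded from below. Since $B\in L^1([0,+\infty),\R)$, the function $t\mapsto \int_0^t B(s)\,ds$ is bounded (its modulus is dominated by $\|B\|_{L^1}$). Combined with the assumption that $A$ is bounded from below, this gives a uniform lower bound for $C$. A non-increasing function bounded from below on $[0,+\infty)$ admits a finite limit at $+\infty$, hence $\lim_{t\to+\infty} C(t)\in\R$ exists.

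Finally, since $B\in L^1([0,+\infty),\R)$, the limit $\lim_{t\to+\infty}\int_0^t B(s)\,ds$ also exists and is finite. Writing
\[
A(t) = C(t) + \int_0^t B(s)\,ds
\]
and passing to the limit as $t\to+\infty$ yields the existence of $\lim_{t\to+\infty} A(t)\in\R$. There is no real obstacle here; the only point that requires a small amount of care is the justification that local absolute continuity plus a nonpositive derivative almost everywhere implies monotonicity, which follows immediately from the fundamental theorem of calculus for absolutely continuous functions applied on each interval $[s,t]\subseteq [0,+\infty)$.
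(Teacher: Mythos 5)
Your proof is correct and complete. Note that the paper itself does not prove this lemma: it is recalled without proof from Abbas--Attouch--Svaiter (cited as Lemma 5.1 there), so there is no in-paper argument to compare against. Your route is the standard one: the auxiliary function $C(t)=A(t)-\int_0^t B(s)\,ds$ is locally absolutely continuous with $\dot C\le 0$ a.e., hence non-increasing by the fundamental theorem of calculus for absolutely continuous functions; it is bounded below because $A$ is and because $\bigl|\int_0^t B(s)\,ds\bigr|\le \|B\|_{L^1}$, so $C$ has a finite limit; and since $B\in L^1([0,+\infty),\R)$ the integral $\int_0^t B(s)\,ds$ also converges as $t\to+\infty$, giving the finite limit of $A$. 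All steps are justified; nothing is missing.
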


\begin{lemma}\label{fejer-cont2}  If $1 \leq p < \infty$, $1 \leq r \leq \infty$, $A:[0,+\infty)\rightarrow[0,+\infty)$ is
locally absolutely continuous, $A\in L^p([0,+\infty), \R)$, $B:[0,+\infty)\rightarrow\R$, $B\in  L^r([0,+\infty), \R)$ and
for almost every $t \in [0,+\infty)$ $$\frac{d}{dt}A(t)\leq B(t),$$ then $\lim_{t\rightarrow +\infty} A(t)=0$.
\end{lemma}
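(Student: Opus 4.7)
The plan is to combine a $\liminf$ argument coming from integrability with a contradiction argument exploiting the differential inequality. First I would show that $\liminf_{t\to+\infty} A(t) = 0$. Indeed, since $A\ge 0$ is continuous (being locally absolutely continuous) and $A\in L^p([0,+\infty),\R)$ with $1\le p<\infty$, any strictly positive lower bound $A(t)\ge \delta>0$ for $t$ large would force $\int_0^{+\infty} A^p\,dt = +\infty$, a contradiction.

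Next I would suppose, for contradiction, that $A(t)$ does not converge to $0$. Then there exist $\varepsilon>0$ and a sequence $t_n\to+\infty$ with $A(t_n)\ge 2\varepsilon$. Combined with $\liminf A = 0$, I can choose $s_n<t_n$ with $s_n\to+\infty$ and $A(s_n)<\varepsilon$, and by continuity and the intermediate value theorem I set $u_n := \sup\{s\in [s_n,t_n]:A(s)\le\varepsilon\}$, which satisfies $A(u_n)=\varepsilon$ and $A(s)\ge\varepsilon$ for every $s\in[u_n,t_n]$. In particular $u_n\to+\infty$.

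Then I would use $A\in L^p$ once more to pin down the length of $[u_n,t_n]$: since $A\ge \varepsilon$ on $[u_n,t_n]$,
\begin{equation*}
\varepsilon^p (t_n-u_n) \le \int_{u_n}^{t_n} A(s)^p\, ds \le \int_{u_n}^{+\infty} A(s)^p\, ds \to 0,
\end{equation*}
so $t_n-u_n\to 0$. Now integrating the differential inequality $\dot A(s)\le B(s)$ on $[u_n,t_n]$, which is legitimate because $A$ is locally absolutely continuous, gives
\begin{equation*}
\varepsilon \le A(t_n)-A(u_n) = \int_{u_n}^{t_n} \dot A(s)\,ds \le \int_{u_n}^{t_n} |B(s)|\, ds.
\end{equation*}

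Finally I would estimate the right-hand side via H\"older's inequality in the three regimes for $r$. If $r=1$, then $\int_{u_n}^{t_n}|B|\,ds \le \int_{u_n}^{+\infty}|B|\,ds\to 0$ since $u_n\to +\infty$ and $B\in L^1$. If $1<r<+\infty$, then $\int_{u_n}^{t_n}|B|\,ds \le (t_n-u_n)^{1-1/r}\|B\|_{L^r}\to 0$ since $t_n-u_n\to 0$. If $r=+\infty$, then $\int_{u_n}^{t_n}|B|\,ds \le (t_n-u_n)\|B\|_{L^\infty}\to 0$. In every case the right-hand side of the previous display tends to $0$, contradicting $\varepsilon>0$. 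Hence $\lim_{t\to+\infty} A(t)=0$. The only delicate point is the construction of the hitting times $u_n$ and the verification that $t_n-u_n\to 0$; the rest is a bookkeeping application of H\"older and the fundamental theorem of calculus for absolutely continuous functions.
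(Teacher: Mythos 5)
Your argument is correct. Note that the paper itself does not prove this lemma: it is quoted from Abbas--Attouch--Svaiter \cite[Lemma 5.2]{abbas-att-sv}, so there is no in-text proof to compare against; your proposal supplies a complete, self-contained proof. All the delicate points check out: $\liminf_{t\to+\infty}A(t)=0$ follows from $A\ge 0$ and $A\in L^p$ with $p<\infty$; the hitting time $u_n=\sup\{s\in[s_n,t_n]:A(s)\le\e\}$ satisfies $A(u_n)=\e$ and $A\ge\e$ on $[u_n,t_n]$ by continuity and the definition of the supremum (the choice of $s_n<t_n$ with $s_n\to+\infty$ requires interleaving, i.e.\ passing to a subsequence of $(t_n)$, which is harmless); $t_n-u_n\to 0$ follows from $\e^p(t_n-u_n)\le\int_{u_n}^{+\infty}A^p$; and the fundamental theorem of calculus for absolutely continuous functions together with $\dot A\le B$ a.e.\ and H\"older in the three regimes for $r$ yields the contradiction. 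For comparison, the argument in the cited reference is organized slightly differently: assuming $A(t_n)\ge\e$, one integrates the inequality backwards over intervals $[t_n-\d,t_n]$ of a \emph{fixed} small length $\d$ (chosen via H\"older when $r>1$, or via the vanishing tail of $\int|B|$ when $r=1$) to conclude $A\ge\e/2$ on infinitely many disjoint intervals of length $\d$, which contradicts $A\in L^p$ directly and does not require the preliminary $\liminf$ step. Your variant trades that fixed-length construction for the hitting times $u_n$ and the observation $t_n-u_n\to 0$; both routes are equally valid, and yours makes the role of the $L^p$ hypothesis appear twice (once for the $\liminf$, once for the interval length) rather than once.
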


The first result which we prove in this section is a continuous version of the Opial Lemma formulated in the setting of variable metrics (see \cite[Theorem 3.3]{CVu} for its discrete counterpart).

\begin{lemma}\label{opial-contvar} Let $\mathcal{C} \subseteq { {\mathcal H}}$ be a nonempty set and $x:[0,+\infty)\rightarrow{ {\mathcal H}}$ a continuous map. Let $M:[0,+\infty)\To S_+(\mathcal{H})$ be such that $M(t_1)\succcurlyeq M(t_2)$ for every $t_1,t_2\in [0,+\infty)$ with $t_1\le t_2$ and there exists  $\a>0$  with $M(t)\in P_\a(\mathcal{H})$ for every $t\in[0,+\infty)$. If the following two conditions are fulliled

(i)  the limit $\lim_{t\rightarrow+\infty}\|x(t)-z\|_{M(t)}$ exists for every $z\in \mathcal{C}$;

(ii) every weak sequential cluster point of $x(t), t \in [0,+\infty),$ belongs to $\mathcal{C}$;

\noindent then there exists $x_{\infty}\in \mathcal{C}$ such that $x(t), t \in [0,+\infty)$, converges weakly to $x_{\infty}$ as $t \rightarrow +\infty$.
\end{lemma}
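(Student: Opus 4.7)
\medskip

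\noindent\textbf{Proof plan.} The strategy is a variable-metric adaptation of the classical continuous Opial lemma: I would first establish that $x(t)$ is bounded, then identify a limiting operator $M_\infty$ that governs the passage to the limit, and finally use a polarization-type identity to force the uniqueness of any weak sequential cluster point of $x(\cdot)$.

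\medskip

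\noindent\textbf{Step 1 (Boundedness).} Fix any $z\in\mathcal{C}$. By hypothesis (i), $\|x(t)-z\|_{M(t)}$ is bounded in $t$. Because $M(t)\in P_\a(\mathcal{H})$, one has $\|x(t)-z\|_{M(t)}^2\ge\a\|x(t)-z\|^2$, so $x(t)$ stays in a bounded set of $\mathcal{H}$. In particular the set of weak sequential cluster points of $x(t)$ is nonempty, and by (ii) contained in $\mathcal{C}$.

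\medskip

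\noindent\textbf{Step 2 (Construction of $M_\infty$).} The monotonicity assumption means that for every $u\in\mathcal{H}$, $t\mapsto \<M(t)u,u\>$ is nonincreasing and bounded below by $\a\|u\|^2$, so it admits a limit as $t\to+\infty$. By polarization, $\<M(t)u,v\>$ converges for every $u,v\in\mathcal{H}$ and defines a bounded self-adjoint operator $M_\infty\in S_+(\mathcal{H})$ with $M_\infty\succcurlyeq\a I$. Moreover, setting $N(t):=M(t)-M_\infty\succcurlyeq 0$, the inequality valid for every positive self-adjoint bounded operator
\begin{equation*}
\|N(t)u\|^2\le\|N(t)\|\,\<N(t)u,u\>
\end{equation*}
together with $\|N(t)\|\le\|M(0)\|+\|M_\infty\|$ and $\<N(t)u,u\>\to 0$ yields the strong convergence $M(t)u\to M_\infty u$ in $\mathcal{H}$ for every $u$.

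\medskip

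\noindent\textbf{Step 3 (Uniqueness of the weak cluster point).} Let $x_1,x_2\in\mathcal{C}$ be two weak sequential cluster points of $x(\cdot)$ along $t_n\to+\infty$ and $s_n\to+\infty$, respectively. Expanding the seminorms, I would write
\begin{equation*}
\|x(t)-x_1\|_{M(t)}^2-\|x(t)-x_2\|_{M(t)}^2=\<M(t)x_1,x_1\>-\<M(t)x_2,x_2\>-2\<M(t)(x_1-x_2),x(t)\>.
\end{equation*}
By (i), the left-hand side has a limit as $t\to+\infty$, and by Step 2 the first two terms on the right also have limits. Therefore $t\mapsto\<M(t)(x_1-x_2),x(t)\>$ converges. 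Evaluating along $t_n$ and along $s_n$, the strong convergence $M(t_n)(x_1-x_2)\to M_\infty(x_1-x_2)$ combined with the weak convergence $x(t_n)\rightharpoonup x_1$ (and analogously for $s_n$) gives
\begin{equation*}
\<M_\infty(x_1-x_2),x_1\>=\<M_\infty(x_1-x_2),x_2\>,
\end{equation*}
hence $\<M_\infty(x_1-x_2),x_1-x_2\>=0$. Since $M_\infty\succcurlyeq\a I$ with $\a>0$, this forces $x_1=x_2$. Combined with Step 1 and hypothesis (ii), this yields the existence of a unique $x_\infty\in\mathcal{C}$ such that $x(t)\rightharpoonup x_\infty$ as $t\to+\infty$.

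\medskip

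\noindent\textbf{Main obstacle.} The only nontrivial analytic point is Step 2: deducing from the pointwise bilinear-form convergence of $M(t)$ the strong convergence $M(t)u\to M_\infty u$, which is what legitimates the product ``strong times weak$\rightarrow$inner product'' in Step 3. The inequality $\|Au\|^2\le\|A\|\<Au,u\>$ for positive self-adjoint $A$ is the decisive ingredient and rests on the existence of the square root $A^{1/2}$; once this is available, the rest of the argument is a routine variable-metric adaptation of the classical continuous Opial lemma.
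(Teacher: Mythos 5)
Your proposal is correct and follows essentially the same route as the paper: boundedness from (i) and $M(t)\in P_\a(\mathcal{H})$, convergence of the quadratic forms $t\mapsto\|z\|^2_{M(t)}$ by monotonicity, strong convergence of $M(t)z$, and then the polarization identity combined with the weak--strong pairing along the two subsequences to force $\|x_1-x_2\|^2_{M_\infty}=0$. The only cosmetic difference is in establishing strong convergence: you first build $M_\infty$ by polarization and then use $\|N(t)u\|^2\le\|N(t)\|\langle N(t)u,u\rangle$ for $N(t)=M(t)-M_\infty\succcurlyeq 0$, while the paper checks the Cauchy criterion for $M(t)z$ directly via the generalized Cauchy--Schwarz inequality applied to $M(t)-M(s)\succcurlyeq 0$ -- the same positivity argument in a slightly different order.
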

\begin{proof} Since $\mathcal{C}\neq\emptyset$ and  $M(t)\in P_\a(\mathcal{H})$, by (i) we have that $x$ is bounded, hence it possesses at least one weak sequential cluster point, which belongs to ${\cal C}$. We show that $x$ has exactly one weak sequential cluster point.

 Indeed, let $x_1,x_2$ two weak sequential cluster points of $x$. For our claim it is enough to show that $x_1=x_2$. Obviously $x_1,x_2\in \mathcal{C}$ and there exist the sequences $(t_n^1)_{n\geq 0},(t_n^2)_{n\geq 0}\subseteq[0,+\infty)$ with $\lim_{n\To+\infty}t_n^1=+\infty$ and $\lim_{n\To+\infty}t_n^2=+\infty$ such that $(x(t_n^1))_{n \geq 0}$ converges weakly to $x_1$ and $(x(t_n^2))_{n \geq 0}$ converges weakly to $x_2$ as $n \rightarrow +\infty$.

Further, since  $M(t_1)\succcurlyeq M(t_2)$ for every  for every $t_1,t_2\in [0,+\infty)$ with $t_1\le t_2$ and $M(t)\in P_\a(\mathcal{H})$  for every $t \in [0,+\infty),$ it follows that  for every $z\in \mathcal{H}$ the function
$$[0,+\infty) \rightarrow [0,+\infty), t\To\|z\|_{M(t)}^2,$$
 is decreasing and is bounded from below, hence there exists
 \begin{equation}\label{limnorm}\lim_{t\To+\infty}\|z\|_{M(t)}^2\in\R.
 \end{equation}

Since $x_1,x_2\in \mathcal{C}$, we have that  the limits $\lim_{t\rightarrow+\infty}\|x(t)-x_1\|_{M(t)}^2$ and  $\lim_{t\rightarrow+\infty}\|x(t)-x_2\|_{M(t)}^2$ exist. Further, since
$$-\<x(t),M(t)(x_1-x_2)\>= \frac12\left(\|x(t)-x_1\|_{M(t)}^2-\|x(t)-x_2\|_{M(t)}^2-\|x_1\|_{M(t)}^2+\|x_2\|_{M(t)}^2\right)$$
holds for every $t \in [0,+\infty)$, the limit
\begin{equation}\label{limit} \lambda:=\lim_{t\To+\infty}\<x(t),M(t)(x_1-x_2)\> \in\R
\end{equation}
exists.

Next we show that the limits
\begin{equation}\label{zlimit}
\lim_{t\To+\infty}M(t)z
\end{equation}
exists for every $z \in {\cal H}$. To this end we fix $z \in {\cal H}$. We will actually show that
$$\lim\limits_{s,t \rightarrow +\infty}  \|M(t)z-M(s)z\| = 0$$
and the conclusion will follow by the Cauchy criterion.

For $U \in S_+(\mathcal{H})$ we have by the generalized Cauchy-Schwarz inequality that 
$$|\<Ux,z\>| \le\|x\|_U \|z\|_U \mbox{ for every }x,z \in H.$$
Hence, for $t,s \in [0,+\infty)$ with $t \leq s$ we have $M(t)-M(s)\in S_+(\mathcal{H})$, therefore 
\begin{align*}
 \|(M(t)-M(s))z\|^2 & =\<(M(t)-M(s))z,M(t)-M(s))z\> \\
& \le\|z\|_{(M(t)-M(s))} \|(M(t)-M(s))z\|_{(M(t)-M(s))} \\
& = \|z\|_{(M(t)-M(s))} \left( \<(M(t)-M(s))^2z,(M(t)-M(s))z\>\right)^\frac{1}{2}\\
& \le \|z\|_{(M(t)-M(s))} \|M(t)-M(s)\|^\frac{3}{2}\|z\|.
\end{align*}
Since $M(0)\succcurlyeq M(t)$, we have that
$$\|M(t)\|=\sup_{\|x\|=1}\<M(t)x,x\>\le \sup_{\|x\|=1}\<M(0)x,x\>\le \|M(0)\|$$
for every $t\in [0,+\infty)$. This shows that $\|M(t)-M(s)\|, t,s \in [0,+\infty),$ is bounded. This, together with the fact that $\lim\limits_{s,t \rightarrow +\infty} \|z\|_{(M(t)-M(s))}^2 = 0$, which follows from \eqref{limnorm}, implies
$$\|(M(t)-M(s))z\|\To 0 \ \mbox{as} \ s,t\To+\infty.$$
This proves \eqref{zlimit}. For every  $z\in {\cal H}$ let us denote by $Mz:=\lim_{t\To+\infty}M(t)z.$ Since $M(t)\in P_\a(\mathcal{H})$ for every $t \in [0,+\infty)$, it holds
$$M\in P_\a(\mathcal{H}).$$

Since $(x(t_n^1))_{n \geq 0}$ converges weakly to $x_1$ and $(x(t_n^2))_{n \geq 0}$ converges weakly to $x_2$ as $n \rightarrow +\infty$
and
$$M(t_n^1)(x_2-x_1)\To M(x_2-x_1)\mbox{ and }M(t_n^2)(x_2-x_1)\To M(x_2-x_1) \ \mbox{as} \ n \To+\infty,$$
passing to the limit in \eqref{limit} we get
$$\lim_{n\To+\infty}\<x(t_n^1),M(t_n^1)(x_2-x_1)\>=\<x_1,M(x_2-x_1\>)=\l$$
and
$$\lim_{n\To+\infty}\<x(t_n^2),M(t_n^2)(x_2-x_1)\>=\<x_2,M(x_2-x_1)\>=\l.$$
In conclusion,
$$0=\<x_2,M(x_2-x_1)\>-\<x_1,M(x_2-x_1)\>=\|x_2-x_1\|_M^2\ge\a\|x_2-x_1\|^2,$$
which shows that $x_1=x_2.$
\end{proof}

\begin{remark}\label{decresingM}
If a map $M:[0,+\infty)\To S_+(\mathcal{H})$ satisfies $M(t_1)\succcurlyeq M(t_2)$ for every $t_1,t_2\in [0,+\infty)$ with $t_1\le t_2$ we say that $M$ is monotonically decreasing. If  $M$ is monotonically decreasing and locally absolutely continuous, then $\dot{M}(t)$ exists and $\<\dot{M}(t)x,x\>\le 0$ for almost every $t\in[0,+\infty).$
\end{remark}

The following result is an adaptation of a result from \cite{ACS} to our setting.

\begin{proposition}(see \cite[Proposition 2.4]{ACS})\label{weakconv} In the setting of the optimization problem \eqref{primal}, let $(a_n, a_n^*)_{n\geq 0}$ be a sequence
in the graph of $\p (f+h)$ and $(b_n, b_n^*)_{n\geq 0}$ a sequence in the graph of $\p g.$  Suppose that
$a_n$ converges weakly to $\ol x\in \mathcal{H},$  $b_n^*$ converges weakly to $\ol v\in\mathcal{G}$,
$a_n^*+A^* b_n^*\To 0, $ and $A a_n-b_n \To 0$ as $n \rightarrow +\infty$. Then
$$\<a_n,a_n^*\>+\<b_n,b_n^*\>\To 0 \ \mbox{as} \ n \rightarrow +\infty$$ and
$$\ol v\in \p g(A\ol x),\, -A^*\ol v \in\p f(\ol x) + \n h(\ol x).$$
\end{proposition}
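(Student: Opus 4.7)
The plan is to recast the two subdifferential inclusions in the hypothesis as a single monotone inclusion on the product space $\mathcal{H}\times\mathcal{G}$ and then to exploit the fact that the graph of a maximally monotone operator is sequentially closed in the weak-strong topology. To this end, I would introduce the operators
\[
T_1:\mathcal{H}\times\mathcal{G}\rightrightarrows \mathcal{H}\times\mathcal{G},\quad T_1(x,v)=\partial(f+h)(x)\times \partial g^*(v),
\]
and
\[
T_2:\mathcal{H}\times\mathcal{G}\to\mathcal{H}\times\mathcal{G},\quad T_2(x,v)=(A^*v,-Ax).
\]
The operator $T_1$ is maximally monotone as the product of the subdifferentials of proper, convex and lower semicontinuous functions, while $T_2$ is maximally monotone because it is bounded linear, skew-adjoint and everywhere defined. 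Rockafellar's sum theorem then yields that $T:=T_1+T_2$ is maximally monotone.

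The numerical conclusion follows from the elementary identity
\[
\langle a_n,a_n^*\rangle+\langle b_n,b_n^*\rangle=\langle a_n,a_n^*+A^*b_n^*\rangle+\langle b_n-Aa_n,b_n^*\rangle,
\]
which arises by rewriting $\langle a_n,A^*b_n^*\rangle=\langle Aa_n,b_n^*\rangle$. Since the weakly convergent sequences $(a_n)$ and $(b_n^*)$ are norm bounded, while $a_n^*+A^*b_n^*\to 0$ and $b_n-Aa_n\to 0$ strongly, both terms on the right tend to zero.

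For the two inclusions, I would use the equivalence $b_n^*\in \partial g(b_n)\Leftrightarrow b_n\in \partial g^*(b_n^*)$ (valid since $g$ is proper, convex and lower semicontinuous), so that
\[
\big(a_n^*+A^*b_n^*,\, b_n-Aa_n\big)\in T(a_n,b_n^*)\quad \forall n\geq 0.
\]
Here the arguments converge weakly to $(\overline{x},\overline{v})$ while the images converge strongly to $(0,0)$. Invoking the weak-strong sequential closure of the graph of the maximally monotone operator $T$ gives $(0,0)\in T(\overline{x},\overline{v})$, which unpacks to $-A^*\overline{v}\in \partial(f+h)(\overline{x})$ and $A\overline{x}\in\partial g^*(\overline{v})$. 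Since $h$ is convex and Fr\'echet differentiable with full domain, the sum rule delivers $\partial(f+h)=\partial f+\nabla h$, yielding the claimed form of the first relation; the second is equivalent, by Fenchel-Young, to $\overline{v}\in\partial g(A\overline{x})$.

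The most delicate point is the application of the weak-strong graph closure. For any fixed $(y,w)\in \gr(T)$ with image $z\in T(y,w)$, monotonicity gives $\langle (a_n,b_n^*)-(y,w),\, (a_n^*+A^*b_n^*,b_n-Aa_n)-z\rangle \geq 0$; passing to the limit requires handling the cross term $\langle (a_n,b_n^*),(a_n^*+A^*b_n^*,b_n-Aa_n)\rangle$, which is controlled because the weak convergence of the arguments paired against the strong convergence of the images is enough for the inner product to pass to the limit. Maximality of $T$ then forces $(0,0)\in T(\overline{x},\overline{v})$, and no further work is needed beyond unpacking the product structure.
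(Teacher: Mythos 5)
Your proof is correct and complete: the product-space operator $T_1+T_2$ is maximally monotone (subdifferential product plus a skew, everywhere-defined linear operator), the identity $\<a_n,a_n^*\>+\<b_n,b_n^*\>=\<a_n,a_n^*+A^*b_n^*\>+\<b_n-Aa_n,b_n^*\>$ together with the boundedness of the weakly convergent sequences gives the numerical claim, and the weak--strong sequential closedness of $\gr T$ yields $(0,0)\in T(\ol x,\ol v)$, which unpacks to the two inclusions via the sum rule $\p(f+h)=\p f+\n h$ and the inversion $A\ol x\in\p g^*(\ol v)\Leftrightarrow \ol v\in\p g(A\ol x)$. The paper itself gives no proof of this proposition, merely citing \cite[Proposition 2.4]{ACS}, and your argument is essentially the standard one underlying that cited result, so your route coincides with the source the paper relies on.
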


The theorem which states the asymptotic convergence of the trajectories generated by the dynamical system \eqref{ADMMdysy-subdiff} to a saddle point of the Lagrangian of the problem \eqref{primal} follows.
\begin{theorem}\label{convergence} In the setting of the optimization problem \eqref{primal}, assume that the set of  saddle points of the Lagrangian $l$ is nonempty,
the maps
$$[0,+\infty) \rightarrow S_+({\cal H}), t \mapsto M_1(t), \ \mbox{and} \ [0,+\infty) \rightarrow S_+({\cal G}), t \mapsto M_2(t),$$
are locally absolutely continuous and monotonically decreasing, 
$$ M_1(t)+\frac{c(1-\g)}{4}A^*A-\frac{L_h}{4}I \in S_+({\cal H}) \quad  \forall t \in [0,+\infty),$$
and
$$\sup_{t\ge 0}\|\dot{M_1}(t)\|<+\infty \ \mbox{and} \ \sup_{t\ge 0}\|\dot{M_2}(t)\|<+\infty.$$
For an arbitrary starting point $(x^0,z^0,y^0)\in {\mathcal H}\times{\mathcal G}\times{\mathcal G}$, let $(x,z,y):[0,+\infty)\To {\mathcal H}\times{\mathcal G}\times{\mathcal G}$ 
be the unique strong global solution of the dynamical system \eqref{ADMMdysy-subdiff}. If one of the following conditions holds:
\begin{itemize}
\item[(I)] there exists $\alpha > 0$ such that $M_1(t)+\frac{c(1-\g)}{4}A^*A-\frac{L_h}{4}I\in P_{\a}(\mathcal{H})$ for every $t \in [0,+\infty)$;
\item[(II)] $\g\in[0,1)$ and there exists $\alpha > 0$ such that $A^*A\in P_\a(\mathcal{H})$;
\end{itemize}
then the trajectory $(x(t),z(t),y(t))$ converges weakly to a saddle point of $l$ as $t\To+\infty.$
\end{theorem}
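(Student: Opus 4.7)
My strategy is to fix an arbitrary saddle point $(x^*,z^*,y^*)$ of $l$ and apply the variable-metric continuous Opial Lemma (Lemma \ref{opial-contvar}) on the product space $\mathcal H\times\mathcal G\times\mathcal G$ equipped with the family of block-diagonal metrics induced by
$$M(t):=\mathrm{diag}\bigl(M_1(t)+cA^*A,\;M_2(t)+cI,\;\tfrac{1}{c}I\bigr),$$
and the Lyapunov candidate $E(t):=\tfrac{1}{2}\|(x(t),z(t),y(t))-(x^*,z^*,y^*)\|^2_{M(t)}$. Since $M_1,M_2$ are locally absolutely continuous and monotonically decreasing, so is $M$; in case (I) one has $M_1(t)+cA^*A=[M_1(t)+\tfrac{c(1-\g)}{4}A^*A-\tfrac{L_h}{4}I]+\tfrac{c(3+\g)}{4}A^*A+\tfrac{L_h}{4}I\in P_\a$, and in case (II) already $cA^*A\in P_{c\a}$, so in both cases $M(t)\in P_{\a'}$ uniformly for some $\a'>0$, fulfilling the precondition of Lemma \ref{opial-contvar}.

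Next I would differentiate $E$ by means of Lemma \ref{compderiv}. The $\dot M_i$-contributions are nonpositive (Remark \ref{decresingM}), while the remaining derivatives are handled by extracting subgradients $\xi(t)\in\p f(\dot x(t)+x(t))$ and $\eta(t)\in\p g(\dot z(t)+z(t))$ from the first two lines of \eqref{ADMMdysy-subdiff} and pairing them, through the monotonicity of $\p f$ and $\p g$, with the optimality relations $-A^*y^*-\n h(x^*)\in\p f(x^*)$, $y^*\in\p g(z^*)$, $Ax^*=z^*$. The third equation is used to cancel all $(y-y^*)$-cross-terms; the gradient residual $\n h(x)-\n h(x^*)$ is absorbed via the Baillon--Haddad cocoercivity $\|\n h(x)-\n h(x^*)\|^2\le L_h\<\n h(x)-\n h(x^*),x-x^*\>$, and the $\g$-weighted mixed term $cA\g\dot x(t)$ appearing in the second inclusion is absorbed by Young's inequality. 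After collecting terms one targets
$$\frac{d}{dt}E(t)\le -2\|\dot x(t)\|^2_{M_1(t)+\frac{c(1-\g)}{4}A^*A-\frac{L_h}{4}I}-c\|\dot z(t)\|^2-\tfrac{1}{c}\|\dot y(t)\|^2,$$
which is precisely where the hypothesis $M_1(t)+\tfrac{c(1-\g)}{4}A^*A-\tfrac{L_h}{4}I\in S_+({\cal H})$ comes in to render the right-hand side nonpositive.

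Applying Lemma \ref{fejer-cont1} yields $\lim_{t\to+\infty}E(t)\in\R$, which is exactly condition (i) of Lemma \ref{opial-contvar} (the saddle point was arbitrary), and $\dot z,\dot y\in L^2([0,+\infty))$; in case (I) the $\dot x$-dissipation directly gives $\dot x\in L^2$, while in case (II) $\g<1$ combined with $A^*A\in P_\a$ and the third equation recovers $\dot x\in L^2$ a posteriori. Lemma \ref{abscont} together with $\sup\|\dot M_i\|<\infty$ bounds $\|\ddot x\|+\|\ddot z\|+\|\ddot y\|$ linearly in the velocities, so Lemma \ref{fejer-cont2} forces $\dot x(t),\dot z(t),\dot y(t)\to 0$ as $t\to+\infty$. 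Setting $a_n:=\dot x(t_n)+x(t_n)$ and $b_n:=\dot z(t_n)+z(t_n)$, I would add $\n h(a_n)$ to both sides of the first inclusion to produce $a_n^*\in\p(f+h)(a_n)$ (the correction $\n h(a_n)-\n h(x(t_n))$ vanishes strongly by Lipschitz continuity and $\dot x(t_n)\to 0$), and read off $b_n^*\in\p g(b_n)$ from the second inclusion. The vanishing velocities force $a_n^*+A^*b_n^*\to 0$ and $Aa_n-b_n\to 0$ strongly, so Proposition \ref{weakconv} identifies any weak sequential cluster point $(\bar x,\bar z,\bar y)$ along $t_n\to+\infty$ as a saddle point of $l$ with $\bar z=A\bar x$, $\bar y\in\p g(A\bar x)$ and $-A^*\bar y\in\p f(\bar x)+\n h(\bar x)$, verifying condition (ii). Lemma \ref{opial-contvar} then delivers the weak convergence.

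The hardest part will be the precise bookkeeping in the descent inequality: the skew-symmetric primal-dual coupling, the $\g$-weighted relaxation $cA\g\dot x(t)$, and the Lipschitz residual $\n h(x)-\n h(x^*)$ together generate many cross-terms that must either cancel via $Ax^*=z^*$ and the third equation, or be absorbed by Young's inequality with the precise weights that reproduce exactly $\tfrac{c(1-\g)}{4}A^*A$ and $\tfrac{L_h}{4}I$ as the dissipation coefficients. A secondary subtlety, specific to case (II), is to argue $\dot x\in L^2$ when the $\dot x$-dissipation is not uniformly coercive; this uses $A^*A\in P_\a$ together with the third equation and $\dot z,\dot y\in L^2$.
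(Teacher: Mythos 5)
Your overall route coincides with the paper's: pairing the two inclusions with the saddle-point optimality conditions via monotonicity of $\partial f$ and $\partial g$, cancelling the $(y-y^*)$-terms with the third equation, Baillon--Haddad for $\nabla h$, Lemma \ref{fejer-cont1} for the existence of the limit, Lemma \ref{abscont} plus Lemma \ref{fejer-cont2} for vanishing velocities, Proposition \ref{weakconv} to identify weak cluster points, and Lemma \ref{opial-contvar} to conclude. However, two steps as you state them would fail. First, your Lyapunov metric is wrong for $\gamma\in(0,1]$: the $x$-block must be $M_1(t)+c(1-\gamma)A^*A$, not $M_1(t)+cA^*A$. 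The primal-dual bookkeeping produces exactly the total derivative $\frac{c(\gamma-1)}{2}\frac{d}{dt}\|A(x(t)-x^*)\|^2$, so with your $E$ the derivative carries the extra uncancelled term $c\gamma\langle A\dot x(t),A(x(t)-x^*)\rangle$, which is linear in the uncontrolled position and cannot be absorbed by velocity-squared dissipation; consequently neither your target descent inequality nor condition (i) of Lemma \ref{opial-contvar} (existence of $\lim_{t\to+\infty}\|x(t)-x^*\|_{M_1(t)+cA^*A}$, which would in addition require $\lim_{t\to+\infty}\|A(x(t)-x^*)\|$ to exist) is available for your metric. The remedy is to take $W(t)=\left(M_1(t)+c(1-\gamma)A^*A,\,M_2(t)+cI,\,\frac{1}{c}I\right)$; its uniform ellipticity still holds under (I), and under (II) because $\gamma<1$ and $A^*A\in P_\alpha(\mathcal{H})$.

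Second, in case (II) your recovery of $\dot x\in L^2$ does not work as stated. Your target inequality retains only $\|\dot x\|^2_{M_1(t)+\frac{c(1-\gamma)}{4}A^*A-\frac{L_h}{4}I}$, $\|\dot z\|^2$ and $\|\dot y\|^2$ as dissipation; under (II) that $\dot x$-weight is merely positive semidefinite (it may vanish identically), so its integrability gives no control of $\dot x$ or of $A\dot x$. The third equation does not help either: it gives $A\dot x=\frac{1}{c}\dot y+\dot z-(Ax-z)$, and the residual $Ax(t)-z(t)$ is not known to be square-integrable at this stage (showing it tends to zero already uses $\dot x(t)\to0$, which is precisely what is to be proved). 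The full computation in fact yields the additional integrable term $(1-\gamma)\left\|\frac{\sqrt{3c}}{2}A\dot x(t)-\frac{1}{\sqrt{3c}}\dot y(t)\right\|^2$, i.e.\ a $(1-\gamma)c\|A\dot x\|^2$-type dissipation, and it is this term, combined with $\dot y\in L^2$, that gives $A\dot x\in L^2$ and then, via $A^*A\in P_\alpha(\mathcal{H})$, $\dot x\in L^2$. You must therefore keep that cross term in the descent inequality rather than discard it. With these two corrections the remainder of your plan goes through exactly as in the paper.
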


\begin{proof} The proof of the theorem relies on  Lemma \ref{opial-contvar}. An important step in the proof will be the derivation of two inequalities of Lyapunov type, namely, 
\eqref{derivineq}, in the case when $L_h \neq 0$, and \eqref{derivineq0}, in the case when $L_h=0$. Let $(x^*,z^*,y^*)\in{\mathcal H}\times{\mathcal G}\times{\mathcal G}$ be a saddle point of the Lagrangian $l$. Then
$$\left \{ \begin{array}{l}
0 \in \partial f(x^*) + \nabla h(x^*) + A^*y^*\\
Ax^* = z^*, Ax^* \in \partial g^*(y^*).
\end{array} \right. $$

According to \eqref{1eq} we have for almost every $t \in [0,+\infty)$
\begin{equation}\label{use3}
-c A^*A(\dot{x}(t)+x(t))-M_1(t)\dot{x}(t)+c A^*z(t)-A^*y(t)-\n h(x(t))\in\p f(\dot{x}(t)+x(t)),
\end{equation}
which yields, by taking into account the monotonicity of $\partial f$,
\begin{equation}\label{e1}
\<-c A^*A(\dot{x}(t)+x(t))-M_1(t)\dot{x}(t)+c A^*z(t)-A^*(y(t)-y^*)-(\n h(x(t))-\n h(x^*)),\dot{x}(t)+x(t)-x^*\>\ge 0.
\end{equation}
Similarly, according to \eqref{2eq} we have for almost every $t \in [0,+\infty)$
\begin{equation}\label{use4}
-c(\dot{z}(t)+z(t))+cA(\g\dot{x}(t)+x(t))-M_2(t)\dot{z}(t)+y(t)\in\p g(\dot{z}(t)+z(t)),
\end{equation}
which yields, by taking into account the monotonicity of $\partial g$,
\begin{equation}\label{e2}
\<-c(\dot{z}(t)+z(t))+cA(\g\dot{x}(t)+x(t))-M_2(t)\dot{z}(t)+(y(t)-y^*),\dot{z}(t)+z(t)-Ax^*\>\ge 0.
\end{equation}
By using the last equation of  \eqref{ADMMdysy-subdiff} we obtain for almost every $t \in [0,+\infty)$
\begin{align}\label{e3}
& \<- A^*(y(t)-y^*),\dot{x}(t)+x(t)-x^*\>+\<y(t)-y^*,\dot{z}(t)+z(t)-Ax^*\> \nonumber \\
= & -\<y(t)-y^*,A(\dot{x}(t)+x(t))-Ax^*-(\dot{z}(t)+z(t))+Ax^*\> = -\frac{1}{c}\<y(t)-y^*,\dot{y}(t)\>\\
= & -\frac{1}{2c}\frac{d}{dt}\|y(t)-y^*\|^2. \nonumber
\end{align}
Assume that $L_h >0$. By using the Baillon-Haddad Theorem we have for almost every $t \in [0,+\infty)$
\begin{align}\label{e31}
& \<-(\n h(x(t))-\n h(x^*)),\dot{x}(t)+x(t)-x^*\> \nonumber \\
= & -\<\n h(x(t))-\n h(x^*),x(t)-x^*\>-\<\n h(x(t))-\n h(x^*),\dot{x}(t)\> \nonumber \\
\le & -\frac{1}{L_h}\|\n h(x(t))-\n h(x^*)\|^2-\<\n h(x(t))-\n h(x^*),\dot{x}(t)\>\\
= & -\frac{1}{L_h}\left(\left\|\n h(x(t))-\n h(x^*)+\frac{L_h}{2}\dot{x}(t)\right\|^2-\frac{L_h^2}{4}\|\dot{x}(t)\|^2\right).\nonumber
\end{align}
By summing \eqref{e1} and \eqref{e2} and by taking into account \eqref{e3} and \eqref{e31} we obtain for almost every $t \in [0,+\infty)$
\begin{align}\label{e411}
0 \leq & \ \<-c A^*A(\dot{x}(t)+x(t))-M_1(t)\dot{x}(t)+c A^*z(t),\dot{x}(t)+x(t)-x^*\> \nonumber\\
& + \<-c(\dot{z}(t)+z(t))+cAx(t)+c\g A\dot{x}(t)-M_2(t)\dot{z}(t),\dot{z}(t)+z(t)-Ax^*\>\\
& -\frac{1}{2c}\frac{d}{dt}\|y(t)-y^*\|^2 -\frac{1}{L_h}\left(\left\|\n h(x(t))-\n h(x^*)+\frac{L_h}{2}\dot{x}(t)\right\|^2-\frac{L_h^2}{4}\|\dot{x}(t)\|^2\right).\nonumber
\end{align}
We have for almost every $t \in [0,+\infty)$
\begin{align*}
& \<-c A^*A(\dot{x}(t)+x(t))+c A^*z(t),\dot{x}(t)+x(t)-x^*\>\\
& +\<-c(\dot{z}(t)+z(t)) +cAx(t)+c\g A\dot{x}(t),\dot{z}(t)+z(t)-Ax^*\> \\
= & \ -\frac1c\|\dot{y}(t)\|^2+\<-c\dot{z}(t),A(\dot{x}(t)+x(t)-x^*)\>+\<(\g-1)cA\dot{x}(t),\dot{z}(t)+z(t)-Ax^*\>\\
= & -\frac1c\|\dot{y}(t)\|^2+\left\<-c\dot{z}(t),\frac1c\dot{y}(t)+\dot{z}(t)+z(t)-Ax^*\right\>\\
& +\left\<(\g-1)cA\dot{x}(t),A(\dot{x}(t)+x(t))-\frac1c\dot{y}(t)-Ax^*\right\>\\
= & -\frac1c\|\dot{y}(t)\|^2-c\|\dot{z}(t)\|^2+(\g-1)c\|A\dot{x}(t)\|^2-\<\dot{z}(t),\dot{y}(t)\>+(1-\g)\<A\dot{x}(t),\dot{y}(t)\>\\
& + \frac{c(\g-1)}{2}\frac{d}{dt}\left(\|Ax(t)-Ax^*\|^2\right)-\frac{c}{2}\frac{d}{dt}\left(\|z(t)-Ax^*\|^2\right).
\end{align*}
Since
$$\<\dot{z}(t),\dot{y}(t)\>=\left\|\frac{\sqrt{3c}}{2}\dot{z}(t)+\frac{1}{\sqrt{3c}}\dot{y}(t)\right\|^2- \frac{3c}{4}\|\dot{z}(t)\|^2- \frac{1}{3c}\|\dot{y}(t)\|^2,$$
and 
$$\<A\dot{x}(t),\dot{y}(t)\>=-\left\|\frac{\sqrt{3c}}{2}A\dot{x}(t)-\frac{1}{\sqrt{3c}}\dot{y}(t)\right\|^2+\frac{3c}{4}\|A\dot{x}(t)\|^2+\frac{1}{3c}\|\dot{y}(t)\|^2,$$
we obtain from above that for almost every $t \in [0,+\infty)$ it holds
\begin{align}\label{E1}
& \<-c A^*A(\dot{x}(t)+x(t))+c A^*z(t),\dot{x}(t)+x(t)-x^*\> \nonumber\\
& +\<-c(\dot{z}(t)+z(t)) +cAx(t)+c\g A\dot{x}(t),\dot{z}(t)+z(t)-Ax^*\> \nonumber\\
= & \ -\frac{\g+1}{3c}\|\dot{y}(t)\|^2-\frac{c}{4}\|\dot{z}(t)\|^2-\frac{(1-\g)c}{4}\|A\dot{x}(t)\|^2 -\left\|\frac{\sqrt{3c}}{2}\dot{z}(t)+\frac{1}{\sqrt{3c}}\dot{y}(t)\right\|^2\\
& -(1-\g)\left\|\frac{\sqrt{3c}}{2}A\dot{x}(t)-\frac{1}{\sqrt{3c}}\dot{y}(t)\right\|^2 +\frac{c(\g-1)}{2}\frac{d}{dt}\left(\|Ax(t)-Ax^*\|^2\right)-\frac{c}{2}\frac{d}{dt}\left(\|z(t)-Ax^*\|^2\right). \nonumber
\end{align}
By using Lemma \ref{compderiv} we observe that for almost every $t\in[0,+\infty)$ it holds
\begin{align*}
\<-M_1(t)\dot{x}(t),\dot{x}(t)+x(t)-x^*\> = & -\|\dot{x}(t)\|_{M_1(t)}^2-\<M_1(t)\dot{x}(t),x(t)-x^*\>\\
=&  -\|\dot{x}(t)\|_{M_1(t)}^2+\frac12 \<\dot M_1(t)(x(t)-x^*), x(t)-x^*\>\\
& -\frac12\frac{d}{dt}\|x(t)-x^*\|_{M_1(t)}^2
\end{align*}
and
\begin{align*}
\<-M_2(t)\dot{z}(t),\dot{z}(t)+z(t)-Ax^*\>= & -\|\dot{z}(t)\|_{M_2(t)}^2-\<M_2(t)\dot{z}(t),z(t)-Ax^*\>\\
= &-\|\dot{z}(t)\|_{M_2(t)}^2+\frac12 \<\dot M_2(t)(z(t)-Ax^*), z(t)-Ax^*\> \\
&-\frac12\frac{d}{dt}\|z(t)-Ax^*\|_{M_2(t)}^2.
\end{align*}

By plugging the last two identities and \eqref{E1} into \eqref{e411}, we obtain for almost every $t\in[0,+\infty)$
\begin{align*}
0 \leq & -\frac{\g+1}{3c}\|\dot{y}(t)\|^2-\frac{c}{4}\|\dot{z}(t)\|^2-\frac{(1-\g)c}{4}\|A\dot{x}(t)\|^2 -\left\|\frac{\sqrt{3c}}{2}\dot{z}(t)+\frac{1}{\sqrt{3c}}\dot{y}(t)\right\|^2\\
& -(1-\g)\left\|\frac{\sqrt{3c}}{2}A\dot{x}(t)-\frac{1}{\sqrt{3c}}\dot{y}(t)\right\|^2 + \frac{c(\g-1)}{2}\frac{d}{dt}\left(\|Ax(t)-Ax^*\|^2\right) -\frac{c}{2}\frac{d}{dt}\left(\|z(t)-Ax^*\|^2\right)\\
& -\|\dot{x}(t)\|_{M_1(t)}^2+\frac12 \<\dot M_1(t)(x(t)-x^*), x(t)-x^*\> -\frac12\frac{d}{dt}\|x(t)-x^*\|_{M_1(t)}^2\\
& -\|\dot{z}(t)\|_{M_2(t)}^2+\frac12 \<\dot M_2(t)(z(t)-Ax^*), z(t)-Ax^*\> -\frac12\frac{d}{dt}\|z(t)-Ax^*\|_{M_2(t)}^2\\
& -\frac{1}{2c}\frac{d}{dt}\|y(t)-y^*\|^2 -\frac{1}{L_h}\left(\left\|\n h(x(t))-\n h(x^*)+\frac{L_h}{2}\dot{x}(t)\right\|^2-\frac{L_h^2}{4}\|\dot{x}(t)\|^2\right).
\end{align*}
According to Remark \ref{decresingM}, 
$$\<\dot M_1(t)(x(t)-x^*), x(t)-x^*\> \leq 0 \ \mbox{and} \ \<\dot M_2(t)(z(t)-Ax^*), z(t)-Ax^*\> \leq 0$$
for almost every $t\in[0,+\infty)$. This means that for almost every $t\in[0,+\infty)$ we have
\begin{align}\label{derivineq}
\frac12\frac{d}{dt}\left(\|x(t)-x^*\|_{M_1(t)+c(1-\g)A^*A}^2+\|z(t)-Ax^*\|_{M_2(t)+cI}^2+\frac1c\|y(t)-y^*\|^2\right)+ & \nonumber \\
\|\dot{x}(t)\|_{M_1(t)+\frac{(1-\g)c}{4}A^*A-\frac{L_h}{4}I}^2+\|\dot{z}(t)\|_{M_2(t)+\frac{c}{4}I}^2+\frac{\g+1}{3c}\|\dot{y}(t)\|^2+ & \nonumber\\
\left\|\frac{\sqrt{3c}}{2}\dot{z}(t)+\frac{1}{\sqrt{3c}}\dot{y}(t)\right\|^2+(1-\g)\left\|\frac{\sqrt{3c}}{2}A\dot{x}(t)-\frac{1}{\sqrt{3c}}\dot{y}(t)\right\|^2+& \\
\frac{1}{L_h}\left\|\n h(x(t))-\n h(x^*)+\frac{L_h}{2}\dot{x}(t)\right\|^2 &\le 0.\nonumber
\end{align}
From Lemma \ref{fejer-cont1} we have
\begin{equation}\label{finitelimit}
\lim_{t\To+\infty}(\|x(t)-x^*\|_{M_1(t)+c(1-\g)A^*A}^2+\|z(t)-Ax^*\|_{M_2(t)+cI}^2+\frac1c\|y(t)-y^*\|^2)\in \R.
\end{equation}
Let be $T >0$. By integrating \eqref{derivineq} on the interval $[0,T]$ we obtain
\allowdisplaybreaks
\begin{align*}
\frac{1}{2}\left(\|x(T)-x^*\|_{M_1(T)+c(1-\g)A^*A}^2+\|z(T)-z^*\|_{M_2(T)+cI}^2+\frac{1}{c}\|y(T)-y^*\|^2\right)+&\\
\int_0^T\|\dot{x}(t)\|_{M_1(t)+\frac{(1-\g)c}{4}A^*A-\frac{L_h}{4}I}^2dt+\int_0^T\|\dot{z}(t)\|_{M_2(t)+\frac{c}{4}I}^2dt+\frac{\g+1}{3c}\int_0^T\|\dot{y}(t)\|^2dt+&\\
\int_0^T\left\|\frac{\sqrt{3c}}{2}\dot{z}(t)+\frac{1}{\sqrt{3c}}\dot{y}(t)\right\|^2dt+(1-\g)\int_0^T\left\|\frac{\sqrt{3c}}{2}A\dot{x}(t)-\frac{1}{\sqrt{3c}}\dot{y}(t)\right\|^2dt+&\\
\frac{1}{L_h}\int_0^T\left\|\n h(x(t))-\n h(x^*)+\frac{L_h}{2}\dot{x}(t)\right\|^2dt& \leq\\
\frac{1}{2}\left(\|x_0-x^*\|_{M_1(0)+c(1-\g)A^*A}^2+\|z_0-z^*\|_{M_2(0)+cI}^2+\frac{1}{c}\|y_0-y^*\|^2\right).
\end{align*}
Letting $T$ converge to $+\infty$ we find
\begin{align}
& \|\dot{x}(\cdot)\|_{M_1(\cdot)+\frac{(1-\g)c}{4}A^*A-\frac{L_h}{4}I}^2\in L^1([0,+\infty),\R), \label{int1}\\
& \|\dot{z}(\cdot)\|_{M_2(\cdot)+\frac{c}{4}I}^2\in L^1([0,+\infty),\R), \dot{y}(\cdot)\in L^2([0,+\infty),{\mathcal G}),\label{int2}\\
& \frac{\sqrt{3c}}{2}\dot{z}(\cdot)+\frac{1}{\sqrt{3c}}\dot{y}(\cdot) \in L^2([0,+\infty),{\mathcal G}), (1-\g)\left(\frac{\sqrt{3c}}{2}A\dot{x}(\cdot)-\frac{1}{\sqrt{3c}}\dot{y}(\cdot)\right)\in L^2([0,+\infty),{\mathcal G}),\label{int3}
\end{align}
and, consequently,
\begin{equation}\label{int4}
\dot{z}(\cdot),\,(1-\g)A\dot{x}(\cdot)\in L^2([0,+\infty),{\mathcal G}).
\end{equation}
In the case when $L_h=0$, which corresponds to the situation when $h$ is an affine-continuous function, instead of \eqref{derivineq} we obtain that for almost
every $t\in[0,+\infty)$
\begin{align}\label{derivineq0}
\frac12\frac{d}{dt}\left(\|x(t)-x^*\|_{M_1(t)+c(1-\g)A^*A}^2+\|z(t)-Ax^*\|_{M_2(t)+cI}^2+\frac1c\|y(t)-y^*\|^2\right)+ & \nonumber \\
\|\dot{x}(t)\|_{M_1(t)+\frac{(1-\g)c}{4}A^*A}^2+\|\dot{z}(t)\|_{M_2(t)+\frac{c}{4}I}^2+\frac{\g+1}{3c}\|\dot{y}(t)\|^2+ & \\
\left\|\frac{\sqrt{3c}}{2}\dot{z}(t)+\frac{1}{\sqrt{3c}}\dot{y}(t)\right\|^2+(1-\g)\left\|\frac{\sqrt{3c}}{2}A\dot{x}(t)-\frac{1}{\sqrt{3c}}\dot{y}(t)\right\|^2& \leq 0. \nonumber
\end{align}
By arguing as above, we obtain also in this case \eqref{finitelimit}, \eqref{int1}-\eqref{int3} and \eqref{int4}.

Further, we have that $\dot{x}(\cdot)\in L^2([0,+\infty),{\mathcal H})$. Indeed,  in case (I), when we assume that there exists $\alpha >0$ such that $M_1(t)+\frac{(1-\g)c}{4}A^*A-\frac{L_h}{4}I\in P_{\a}(\mathcal{H})$ for every $t\in[0,+\infty)$, then this 
yields automatically. In case (II), from $(1-\g)A\dot{x}(\cdot)\in L^2([0,+\infty),{\mathcal G})$ and $\g\in[0,1)$, we have
$$A\dot{x}(\cdot)\in L^2([0,+\infty),{\mathcal G}).$$
But, since $A^*A\in P_{\a}(H)$, it yields $\|A\dot{x}(t)\|^2\ge\a\|\dot{x}(t)\|^2$  for almost
every $t\in[0,+\infty)$, which means that also in this case
$$\dot{x}(\cdot)\in  L^2([0,+\infty),{\mathcal H}).$$
According to Lemma \ref{abscont}, this yields
$$\ddot{x}(\cdot)\in L^2([0,+\infty),{\mathcal H})\mbox{ and } \ddot{z}(\cdot),\, \ddot{y}(\cdot)\in L^2([0,+\infty),{\mathcal G}).$$

Consequently, for almost every $t\in[0,+\infty)$ it holds
$$\frac{d}{dt}\|\dot{x}(t)\|^2=2\<\ddot{x}(t),\dot{x}(t)\>\le\left(\|\ddot{x}(t)\|^2+\|\dot{x}(t)\|^2\right)$$
and the right-hand side is a function in $L^1([0,+\infty),\R)$. Hence, according to Lemma \ref{fejer-cont2},
$$\lim_{t\To+\infty}\dot{x}(t)=0.$$
Similarly, we obtain that
$$\lim_{t\To+\infty}\dot{z}(t)=0\mbox{ and }\lim_{t\To+\infty}\dot{y}(t)=0.$$

We will close the proof of the theorem by showing that the asymptotic convergence of the trajectory follows from Lemma \ref{opial-contvar}. One can easily notice that \eqref{finitelimit} is nothing else but condition (i) of this lemma when applied in the product space  for the trajectory
$$[0,+\infty) \mapsto  \mathcal{H}\times\mathcal{G}\times\mathcal{G},  \quad t\To(x(t),z(t),y(t)),$$
 the monotonically decreasing map
$$W: [0,+\infty) \mapsto  \mathcal{H}\times\mathcal{G}\times\mathcal{G},  \quad  W(t)=\left(M_1(t)+c(1-\g)A^*A,M_2(t)+cI,\frac{1}{c}I\right)$$
and the set $\mathcal{C}$ taken as the set of saddle points of the Lagrangian $l.$

Next we will show that also condition (ii) in  Lemma \ref{opial-contvar} is fulfilled, namely, that every weak sequential cluster point of the trajectory $(x(t),z(t),y(t)), t \in [0,+\infty),$ is a saddle point of the Langrangian $l$.

Let $(\ol x,\ol z,\ol y)$ be such a weak sequentially cluster point. This means that there exists a sequence $(s_n)_{n \geq 0}$ with  $s_n\To+\infty$ such that  $(x(s_n),z(s_n),y(s_n))$ converges to $(\ol x,\ol z,\ol y)$  as $n\To+\infty$ in the weak topology of $\mathcal{H}\times\mathcal{G}\times\mathcal{G}$.

From \eqref{use3} and \eqref{use4}  we get for every $n \geq 0$
$$-c A^*A(\dot{x}(s_n)+x(s_n))-M_1(s_n)\dot{x}(s_n)+c A^*z(s_n)-A^*y(s_n)-\n h(x(s_n))\in\p f(\dot{x}(s_n)+x(s_n))$$
and
$$-c(\dot{z}(s_n)+z(s_n))+cA(\g\dot{x}(s_n)+x(s_n))-M_2(s_n)\dot{z}(s_n)+y(s_n)\in\p g(\dot{z}(s_n)+z(s_n)),$$
respectively. For every $n \geq 0$, let
$$a_n^*:=-c A^*A(\dot{x}(s_n)+x(s_n))-M_1(s_n)\dot{x}(s_n)+c A^*z(s_n)-A^*y(s_n)-\n h(x(s_n))+\n h(\dot{x}(s_n)+x(s_n))$$
and
$$a_n:=\dot{x}(s_n)+x(s_n).$$
Hence, $(a_n,a_n^*)_{n \geq 0}\subseteq \gr\p (f+h).$ Similarly, for every $n \geq 0$, let
$$b_n^*:=-c(\dot{z}(s_n)+z(s_n))+cA(\g\dot{x}(s_n)+x(s_n))-M_2(s_n)\dot{z}(s_n)+y(s_n)$$
and
$$b_n:=\dot{z}(s_n)+z(s_n).$$
Hence, $(b_n,b_n^*)_{n \geq 0}\subseteq\gr\p g.$

Since $\lim_{t\To+\infty}\dot{x}(t)=0$, $\lim_{t\To+\infty}\dot{z}(t)=0$ and  $\lim_{t\To+\infty}\dot{y}(t)=0$ it follows that $(a_n)_{n \geq 0}$  converges weakly to $\ol x$ as $n \rightarrow \infty$. Furthermore, since $(M_2(s_n))_{n \geq 0}$ is bounded, and 
$$b_n^*=c(\g-1)A\dot{x}(s_n)+\dot{y}(s_n)-M_2(s_n)\dot{z}(s_n)+y(s_n) \ \forall n \geq 0,$$ it follows that $(b_n^*)_{n \geq 0}$  converges weakly to $\ol y$ as $n \rightarrow \infty$.

From \eqref{ADMMdysy} we have
$$A a_n-b_n=\frac{1}{c}\dot{y}(s_n)\To 0 \ (n \rightarrow +\infty),$$
which implies that $A\ol x=\ol z$. On the other hand, since $\n h$ is Lipschitz continuous, we have
$$\n h(\dot{x}(s_n)+x(s_n))- \n h(x(s_n))\To 0 \ (n \rightarrow +\infty),$$
hence
\begin{align*}
& \ \lim_{n\To+\infty}(a_n^*+A^*b_n^*)\\
= & \ \lim_{n\To+\infty}(c(\g-1)A^*A\dot{x}(s_n)-cA^*\dot{z}(s_n)-A^*M_2(s_n)\dot{z}(s_n)-M_1(s_n)\dot{x}(s_n))\\
& + \lim_{n\To+\infty}(\n h(\dot{x}(s_n)+x(s_n))- \n h(x(s_n)))\\
= & \ 0.
\end{align*}
Thus, according to Proposition \ref{weakconv}, we have
$$- A^*\ol y-\n h(\ol x)\in\p f(\ol x) \ \mbox{and} \
 \ol y\in\p g(A\ol x).$$
Consequently,
$(\ol x, \ol z, \ol y)$ is a saddle point of $l.$ 

The conclusion of the theorem follows from Lemma \ref{opial-contvar}. 
\end{proof}

Next we will address two particular cases of the dynamical system \eqref{ADMMdysy-subdiff}. We consider first the case when $M_1(t)=M_2(t)=0$ for every $t\in [0, +\infty)$, thus, the system \eqref{ADMMdysy-subdiff} becomes 
\begin{equation}\label{ADMMdysyM}
\left\{
\begin{array}{llll}
\dot{x}(t)+x(t)\in\argmin\limits_{x\in\mathcal{H}}\left(f(x)+ \langle x, \nabla h(x(t)) \rangle + \frac{c}{2}
\left \|Ax -z(t) + \frac{1}{c}y(t) \right\|^2 \right)\\
\\
\dot{z}(t)+z(t)=\argmin\limits_{x\in\mathcal{G}}\left(g(x)+\frac{c}{2}\left\|x-\left(A(\g\dot{x}(t)+x(t))+\frac{1}{c}y(t)\right)\right\|^2\right)\\
\\
\dot{y}(t)=c A(x(t)+\dot{x}(t))-c (z(t)+\dot{z}(t))\\
\\
x(0)=x^0\in{\mathcal H},\,z(0)=z^0\in{\mathcal G},\,y(0)=y^0 \in{\mathcal G},
\end{array}
\right.
\end{equation}
where $c >0$ and $\gamma \in [0,1]$. The dynamical system \eqref{ADMMdysyM} can be seen as the continuous counterpart of the classical ADMM algorithm. The corresponding convergence result follows as a particular case of  Theorem \ref{convergence}.

\begin{theorem}\label{convergenceM} In the setting of the optimization problem \eqref{primal}, assume that the set of  saddle points of the Lagrangian $l$ is nonempty, $\gamma \in [0,1)$ and that there exists $\alpha >0$ such that $A^*A-\frac{L_h}{c(1-\g)}I\in P_\a(\mathcal{H})$. For an arbitrary starting point $(x^0,z^0,y^0)\in {\mathcal H}\times{\mathcal G}\times{\mathcal G}$, let $(x,z,y):[0,+\infty)\To {\mathcal H}\times{\mathcal G}\times{\mathcal G}$ 
be the unique strong global solution of the dynamical system \eqref{ADMMdysyM}. Then the trajectory $(x(t),z(t),y(t))$ converges weakly to a saddle point of $l$ as $t\To+\infty.$
\end{theorem}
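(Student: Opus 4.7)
The plan is to derive Theorem \ref{convergenceM} as an immediate corollary of Theorem \ref{convergence} by specializing to $M_1(t) \equiv 0$ and $M_2(t) \equiv 0$ and invoking condition (II) of that theorem. So the entire proof will consist of checking that the hypotheses of Theorem \ref{convergence} are met, and then quoting it.

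First, I would verify the standing hypotheses of Theorem \ref{convergence}. With $M_1, M_2$ identically zero, local absolute continuity and monotone decrease are trivial, and $\sup_{t\ge 0}\|\dot M_1(t)\| = \sup_{t\ge 0}\|\dot M_2(t)\| = 0 < +\infty$. The semidefiniteness condition $M_1(t) + \frac{c(1-\gamma)}{4} A^*A - \frac{L_h}{4} I \in S_+(\mathcal{H})$ reduces to $\frac{c(1-\gamma)}{4} A^*A \succcurlyeq \frac{L_h}{4} I$, which is an immediate consequence of the hypothesis $A^*A - \frac{L_h}{c(1-\gamma)} I \in P_\alpha(\mathcal{H})$.

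Second, I would check condition (II) of Theorem \ref{convergence}: $\gamma \in [0,1)$ holds by assumption, and from $A^*A - \frac{L_h}{c(1-\gamma)} I \in P_\alpha(\mathcal{H})$ one gets $A^*A \succcurlyeq \alpha I + \frac{L_h}{c(1-\gamma)} I \succcurlyeq \alpha I$, whence $A^*A \in P_\alpha(\mathcal{H})$.

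Third, I would justify that the strong global solution referred to in the statement actually exists. Since $cA^*A + M_1(t) = cA^*A \in P_{c\alpha}(\mathcal{H})$ for every $t \in [0,+\infty)$, assumption $(Cstrong)$ is fulfilled, and since $\|M_1(\cdot)\|$ and $\|M_2(\cdot)\|$ are trivially locally integrable, Theorem \ref{uniq} guarantees existence and uniqueness of the trajectory $(x,z,y)$. The weak convergence of $(x(t),z(t),y(t))$ to a saddle point of $l$ then follows directly from Theorem \ref{convergence}. I do not anticipate any substantive obstacle, as the statement is by design a specialization of the general convergence result; the only thing to be careful about is confirming that the single assumption $A^*A - \frac{L_h}{c(1-\gamma)}I \in P_\alpha(\mathcal{H})$ simultaneously delivers $(Cstrong)$, the required positive semidefiniteness in the hypotheses of Theorem \ref{convergence}, and condition (II), which it does.
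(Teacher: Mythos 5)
Your proposal is correct and coincides with the paper's own treatment: the paper states Theorem \ref{convergenceM} precisely as the particular case $M_1(t)=M_2(t)=0$ of Theorem \ref{convergence}, with the hypothesis $A^*A-\frac{L_h}{c(1-\g)}I\in P_\a(\mathcal{H})$ yielding both the semidefiniteness requirement and condition (II). Your additional check that this hypothesis also gives $(Cstrong)$, so that Theorem \ref{uniq} supplies the unique strong global solution, is a sound (and slightly more careful) completion of the same argument.
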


Next we consider the setting from Remark \ref{rem-cont-primal-dual} with $M_1(t) = \frac{1}{\tau(t)} I-cA^*A$ and $M_2(t) = 0$, where $\tau(t)$ is such that $c \tau(t) \|A\|^2 \leq 1$, for every $t\in [0, +\infty)$. The resulting dynamical system is the primal-dual system \eqref{ADMMdysyM1}. The corresponding convergence result follows again as a particular case of  Theorem \ref{convergence}.

\begin{theorem}\label{convergenceM1} In the setting of the optimization problem \eqref{primal}, assume that the set of  saddle points of the Lagrangian $l$ is nonempty,
the map $\tau : [0,+\infty) \rightarrow (0,+\infty)$ is locally absolutely continuous and monotonically increasing with 
$$c\tau(t)\|A\|^2 \leq 1 \ \mbox{and} \ \frac{4-\tau(t)L_h}{4\tau(t)}I-\frac{c(3+\gamma)}{4}A^*A\in S_+(\mathcal{H})  \quad  \forall t \in [0,+\infty),$$ 
and $\sup_{t\ge 0}\frac{\tau'(t)}{\tau^2(t)}<+\infty$.
For an arbitrary starting point $(x^0,z^0,y^0)\in {\mathcal H}\times{\mathcal G}\times{\mathcal G}$, let $(x,z,y):[0,+\infty)\To {\mathcal H}\times{\mathcal G}\times{\mathcal G}$ 
be the unique strong global solution of the dynamical system \eqref{ADMMdysyM1}. If one of the following assumptions holds:
\begin{itemize}
\item[(I)] there exists $\alpha > 0$ such that $\frac{4-\tau(t)L_h}{4\tau(t)}I-\frac{c(3+\gamma)}{4}A^*A\in P_{\a}(\mathcal{H})$ for every $t \in [0,+\infty)$;
\item[(II)] $\g\in[0,1)$ and there exists $\alpha > 0$ such that $A^*A\in P_\a(\mathcal{H})$;
\end{itemize}
then the trajectory $(x(t),z(t),y(t))$ converges weakly to a saddle point of $l$ as $t\To+\infty.$
\end{theorem}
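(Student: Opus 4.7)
The plan is to derive Theorem \ref{convergenceM1} as a direct specialization of Theorem \ref{convergence} applied to the choice
$$M_1(t)=\frac{1}{\tau(t)}I-cA^*A,\qquad M_2(t)=0,\qquad t\in[0,+\infty),$$
so that \eqref{ADMMdysy-subdiff} coincides with the primal-dual dynamical system \eqref{ADMMdysyM1}, as was already shown in Remark \ref{rem-cont-primal-dual}. The whole argument thus reduces to verifying, one by one, that every hypothesis of Theorem \ref{convergence} is a consequence of the hypotheses assumed here.

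First I would check the basic structural assumptions: since $c\tau(t)\|A\|^2\le 1$ for every $t\ge 0$, we have $\frac{1}{\tau(t)}I\succcurlyeq cA^*A$, hence $M_1(t)\in S_+(\mathcal H)$, while obviously $M_2(t)=0\in S_+(\mathcal G)$. The local absolute continuity of $t\mapsto M_1(t)$ follows from that of $\tau$ together with the fact that $\tau$ takes values in a closed subinterval of $(0,+\infty)$ on every compact time interval (local absolute continuity is preserved by composition with a locally Lipschitz map on such intervals), while $M_2$ is constant. Monotonic decrease of $M_1$ is equivalent to $t\mapsto\tfrac{1}{\tau(t)}$ being decreasing, which is exactly the assumed monotonic increase of $\tau$; $M_2$ is trivially decreasing. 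A short computation gives
$$M_1(t)+\tfrac{c(1-\gamma)}{4}A^*A-\tfrac{L_h}{4}I=\tfrac{4-\tau(t)L_h}{4\tau(t)}I-\tfrac{c(3+\gamma)}{4}A^*A,$$
which belongs to $S_+(\mathcal H)$ by assumption.

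Next I would handle the assumption $(Cstrong)$ needed in Theorem \ref{convergence}. Here
$$cA^*A+M_1(t)=\tfrac{1}{\tau(t)}I,$$
and since $\tau$ is increasing with $\tau(t)\le\tfrac{1}{c\|A\|^2}$ for all $t\ge 0$, one obtains $\tfrac{1}{\tau(t)}\ge c\|A\|^2$ (assume $A\ne 0$; otherwise the problem decouples trivially), so $(Cstrong)$ holds with $\alpha:=c\|A\|^2>0$. The bounded-derivative conditions reduce to
$$\|\dot M_1(t)\|=\tfrac{\tau'(t)}{\tau(t)^2}\quad\text{and}\quad\|\dot M_2(t)\|=0,$$
so both are bounded by virtue of the assumption $\sup_{t\ge 0}\tfrac{\tau'(t)}{\tau^2(t)}<+\infty$.

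Finally I would match the two alternative assumptions. Assumption (II) of Theorem \ref{convergenceM1} coincides verbatim with assumption (II) of Theorem \ref{convergence}. Assumption (I) of Theorem \ref{convergenceM1} is the statement that
$$M_1(t)+\tfrac{c(1-\gamma)}{4}A^*A-\tfrac{L_h}{4}I=\tfrac{4-\tau(t)L_h}{4\tau(t)}I-\tfrac{c(3+\gamma)}{4}A^*A\in P_\alpha(\mathcal H),$$
which is exactly assumption (I) of Theorem \ref{convergence} for this specific $M_1$. With all hypotheses verified, Theorem \ref{convergence} yields the weak convergence of $(x(t),z(t),y(t))$ to a saddle point of $l$ as $t\to+\infty$. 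The only mildly subtle points are the uniform lower bound $\tfrac{1}{\tau(t)}\ge c\|A\|^2$ coming from combining monotonicity of $\tau$ with the compatibility constraint $c\tau(t)\|A\|^2\le 1$, and the identification of $\dot M_1(t)=-\tfrac{\tau'(t)}{\tau(t)^2}I$, whose norm is controlled precisely by the quantity in the last standing assumption; both are elementary once noticed.
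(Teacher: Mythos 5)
Your proposal is correct and is precisely the route the paper takes: Theorem \ref{convergenceM1} is obtained by specializing Theorem \ref{convergence} to $M_1(t)=\frac{1}{\tau(t)}I-cA^*A$, $M_2(t)=0$, and your verifications of positivity, monotone decrease, the identity $M_1(t)+\frac{c(1-\gamma)}{4}A^*A-\frac{L_h}{4}I=\frac{4-\tau(t)L_h}{4\tau(t)}I-\frac{c(3+\gamma)}{4}A^*A$, $\|\dot M_1(t)\|=\frac{\tau'(t)}{\tau(t)^2}$, and $(Cstrong)$ are exactly the needed checks, which the paper leaves implicit. The only cosmetic remark is that for the degenerate case $A=0$ the bound $\frac{1}{\tau(t)}\ge c\|A\|^2$ is vacuous, but there $(Cstrong)$ still follows from assumption (I) since $\frac{1}{\tau(t)}\ge\alpha+\frac{L_h}{4}$, while assumption (II) excludes $A=0$ altogether.
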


\begin{remark}\label{rem-param}
Let be $t \in [0,+\infty)$. Notice that the condition $\frac{4-\tau(t)L_h}{4\tau(t)}I-\frac{c(3+\gamma)}{4}A^*A\in S_+(\mathcal{H})$ is fulfilled if and only if
$$\tau(t)\left(\frac{L_h}{4}+\frac{c(3+\gamma)}{4}\|A\|^2\right)\leq 1.$$ On the other hand, the condition 
$\frac{4-\tau(t)L_h}{4\tau(t)}I-\frac{c(3+\gamma)}{4}A^*A\in P_{\a}(\mathcal{H})$ holds, for $\alpha >0$, if and only if 
$$\tau(t)\left(\alpha +\frac{L_h}{4}+\frac{c(3+\gamma)}{4}\|A\|^2\right)\leq 1.$$  
\end{remark}

For the last result of this paper we go back to the general dynamical system \eqref{ADMMdysy-subdiff} and provide convergence rates for the violation of the feasibility condition by ergodic trajectories and the convergence of the objective function along these ergodic trajectories to its minimal value. 
The result can be seen as the continuous counterpart of a convergence rate result proved for the ADMM algorithm in \cite[Theorem 4.3]{cui-li-sun-toh}. 

\begin{theorem}\label{therg} In the setting of the optimization problem \eqref{primal}, assume that the set of saddle points of the Lagrangian $l$ is nonempty,
the maps
$$[0,+\infty) \rightarrow S_+({\cal H}), t \mapsto M_1(t), \ \mbox{and} \ [0,+\infty) \rightarrow S_+({\cal G}), t \mapsto M_2(t),$$
are locally absolutely continuous and monotonically decreasing, 
$$ M_1(t)+\frac{c(1-\g)}{4}A^*A-\frac{L_h}{2}I \in S_+({\cal H}) \quad  \forall t \in [0,+\infty),$$
$$\sup_{t\ge 0}\|\dot{M_1}(t)\|<+\infty \ \mbox{and} \ \sup_{t\ge 0}\|\dot{M_2}(t)\|<+\infty$$
and that one of the following conditions holds:
\begin{itemize}
\item[(I)] there exists $\alpha > 0$ such that $M_1(t)+\frac{c(1-\g)}{4}A^*A-\frac{L_h}{4}I\in P_{\a}(\mathcal{H})$ for every $t \in [0,+\infty)$;
\item[(II)] $\g\in[0,1)$ and there exists $\alpha > 0$ such that $A^*A\in P_\a(\mathcal{H})$;
\end{itemize}
For an arbitrary starting point $(x^0,z^0,y^0)\in {\mathcal H}\times{\mathcal G}\times{\mathcal G}$, let $(x,z,y):[0,+\infty)\To {\mathcal H}\times{\mathcal G}\times{\mathcal G}$ 
be the unique strong global solution of the dynamical system \eqref{ADMMdysy-subdiff}. Consider further for every $t \in (0,+\infty)$ the ergodic trajectories
$$\tilde{x}(t)=\frac{1}{t}\int_0^t (\dot{x}(s)+x(s))ds$$
and
$$\tilde{z}(t)=\frac{1}{t}\int_0^t(\dot{z}(s)+z(s))ds.$$
Then there exists $K\ge0$ such that for every $t \in (0,+\infty)$
$$\|A\tilde{x}(t)-\tilde{z}(t)\|\le\frac{K}{t}.$$
In addition, for every $\ol x\in\mathcal{H}$ and every $t \in (0, +\infty)$ such that $(\tilde{x}(t),\tilde{z}(t))\in\dom f\times \dom g$, one has
$$\Big((f+h)(\tilde{x}(t))+g(\tilde{z}(t))\Big)-\Big((f+h)(\ol x)+g(A \ol x)\Big)\le \frac{\|(x^0,z^0,y^0)-(\ol x,A \ol x,0)\|^2_{W(0)}}{2t},$$
where $$W(t)=\left(M_1(t)+c(1-\g)A^*A,M_2(t)+cI,\frac{1}{c}I\right).$$
\end{theorem}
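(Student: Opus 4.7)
The plan is to repeat the Lyapunov computation from the proof of Theorem \ref{convergence}, but replacing the saddle point $(x^*,z^*,y^*)$ by the general test point $(\ol x, A\ol x, 0)$, and then to extract the Lagrangian gap from the resulting differential inequality via Jensen. Since $\ol x$ is no longer optimal, the Baillon-Haddad estimate \eqref{e31} is unavailable and must be replaced by the combination of convexity of $h$ at $x(s)$ and the descent lemma, at the cost of an extra $\tfrac{L_h}{2}\|\dot x(s)\|^2$ on the right-hand side. This is exactly the reason for which the hypothesis is strengthened from $-\tfrac{L_h}{4}I$ in Theorem \ref{convergence} to $-\tfrac{L_h}{2}I$ here.

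\textbf{Feasibility rate.} Integrating $\dot{y}(s)=cA(x(s)+\dot x(s)) - c(z(s)+\dot z(s))$ on $[0,t]$ yields $y(t)-y^0 = ct(A\tilde x(t)-\tilde z(t))$, so
$$\|A\tilde x(t)-\tilde z(t)\| = \frac{\|y(t)-y^0\|}{ct}.$$
The present hypotheses imply those of Theorem \ref{convergence}, hence by \eqref{finitelimit} the map $t\mapsto\|y(t)-y^*\|$ has a finite limit for any saddle point $y^*$; in particular $\|y(\cdot)\|$ is bounded on $[0,+\infty)$, and one may take $K:=\frac{1}{c}\sup_{t\ge 0}\|y(t)-y^0\|$.

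\textbf{Main inequality.} With $p(s):=\dot x(s)+x(s)$ and $q(s):=\dot z(s)+z(s)$, subgradient inequalities for $f$ and $g$ tested at $\ol x$ and $A\ol x$ (using the inclusions \eqref{use3} and \eqref{use4}), combined with convexity plus descent lemma for $h$, give after the $\nabla h(x(s))$ contributions cancel
\begin{align*}
(f+h)(p(s))+g(q(s))-&(f+h)(\ol x)-g(A\ol x) \\
\le & \ \langle -cA^*A p(s)-M_1(s)\dot x(s)+cA^*z(s)-A^*y(s),\,p(s)-\ol x\rangle \\
& + \langle -cq(s)+cA(\gamma\dot x(s)+x(s))-M_2(s)\dot z(s)+y(s),\,q(s)-A\ol x\rangle + \tfrac{L_h}{2}\|\dot x(s)\|^2.
\end{align*}
Repeating the algebra that led to \eqref{derivineq}, namely the identity $Ap(s)-q(s)=\tfrac{1}{c}\dot y(s)$, Lemma \ref{compderiv}, and the bound $\langle\dot M_i(s)v,v\rangle\le 0$ (Remark \ref{decresingM}), rearranges the right-hand side into
$$-\frac{d}{ds}V(s) + R(s),\qquad V(s):=\tfrac12 \bigl\|(x(s),z(s),y(s))-(\ol x,A\ol x,0)\bigr\|^2_{W(s)},$$
where $R(s)$ is a quadratic form in $(\dot x(s),A\dot x(s),\dot z(s),\dot y(s))$ that also contains the bad term $\tfrac{L_h}{2}\|\dot x(s)\|^2$. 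The assumption $M_1(s)+\tfrac{c(1-\gamma)}{4}A^*A-\tfrac{L_h}{2}I\in S_+(\mathcal{H})$ absorbs $\tfrac{L_h}{2}\|\dot x(s)\|^2$ into $\|\dot x(s)\|^2_{M_1(s)}+\tfrac{c(1-\gamma)}{4}\|A\dot x(s)\|^2$, leaving a negative budget of $\tfrac{3c(1-\gamma)}{4}\|A\dot x(s)\|^2,\ c\|\dot z(s)\|^2$ and $\tfrac{1}{c}\|\dot y(s)\|^2$ to dominate the cross terms $-\langle\dot z(s),\dot y(s)\rangle$ and $(1-\gamma)\langle A\dot x(s),\dot y(s)\rangle$ by Young's inequality, yielding $R(s)\le 0$.

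\textbf{Conclusion and main obstacle.} Integrating $(f+h)(p(s))+g(q(s))-(f+h)(\ol x)-g(A\ol x)\le -\tfrac{d}{ds}V(s)$ on $[0,t]$ and dropping the nonnegative $V(t)$ gives $\int_0^t\bigl[(f+h)(p(s))+g(q(s))\bigr]\,ds-t\bigl[(f+h)(\ol x)+g(A\ol x)\bigr]\le V(0)$. Jensen's inequality applied to the convex functions $f+h$ and $g$ along the trajectories $p$ and $q$ (valid when $(\tilde x(t),\tilde z(t))\in \dom f\times \dom g$) delivers the stated $\mathcal O(1/t)$ bound. The genuinely delicate point is the cross-term bookkeeping for $R(s)$: one must allocate exactly $\tfrac{c(1-\gamma)}{4}\|A\dot x\|^2$ to neutralize $\tfrac{L_h}{2}\|\dot x\|^2$ while preserving enough of $\|A\dot x\|^2$, $\|\dot z\|^2$ and $\|\dot y\|^2$ to dominate the residual bilinear couplings; a short Young's-inequality verification shows that the surplus is strictly positive for every $\gamma\in[0,1]$.
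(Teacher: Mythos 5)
Your proposal is correct and follows essentially the same route as the paper's proof: testing the inclusions \eqref{1eq} and \eqref{use4} at $(\ol x, A\ol x, 0)$, replacing the Baillon--Haddad step by convexity of $h$ plus the Descent Lemma (which is exactly why the hypothesis is strengthened to $-\tfrac{L_h}{2}I$), redoing the Lyapunov algebra of Theorem \ref{convergence} to absorb all quadratic and cross terms, integrating, and concluding via boundedness of $y(\cdot)$ and Jensen's inequality. The only cosmetic difference is that you dominate the cross terms by Young's inequality where the paper completes squares explicitly, which is the same estimate.
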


\begin{proof}
Let $\ol x\in \mathcal{H}$ be fixed. By using \eqref{1eq}, that is
$$-c A^*A(\dot{x}(t)+x(t))-M_1(t)\dot{x}(t)+c A^*z(t)-A^*y(t)-\n h(x(t))\in\p f(\dot{x}(t)+x(t)),$$
it yields
\begin{equation}\label{forf}
f(\dot{x}(t)+x(t))-f(\ol x)\le\< c A^*A(\dot{x}(t)+x(t))+M_1(t)\dot{x}(t)-c A^*z(t)+A^*y(t)+\n h(x(t)),\ol x-(\dot{x}(t)+x(t))\>
\end{equation}
for almost every $t \in [0,+\infty)$. Similarly, by using \eqref{use4}, that is
$$-c(\dot{z}(t)+z(t))+cA(\g\dot{x}(t)+x(t))-M_2(t)\dot{z}(t)+y(t)\in\p g(\dot{z}(t)+z(t)),$$
it yields
\begin{equation}\label{forg}
g(\dot{z}(t)+z(t))-g(A\ol x)\le \<c(\dot{z}(t)+z(t))-cA(\g\dot{x}(t)+x(t))+M_2(t)\dot{z}(t)-y(t),A\ol x-(\dot{z}(t)+z(t))\>.
\end{equation}
for almost every $t \in [0,+\infty)$. Further, by using the convexity of $h$ and the Descent Lemma we obtain for almost every $t \in [0,+\infty)$
\begin{align}\label{forh}
h(\ol x)-h(\dot{x}(t)+x(t))-\<\n h(x(t)),\ol x-(\dot{x}(t)+x(t)) & \>\ge \nonumber\\
h(x(t))+\<\n h(x(t)),\ol x-x(t)\>-h(\dot{x}(t)+x(t))-\<\n h(x(t)),\ol x-(\dot{x}(t)+x(t))\> & = \\
h(x(t))-h(\dot{x}(t)+x(t))+\<\n h(x(t)),\dot{x}(t)\> & \ge-\frac{L_h}{2}\|\dot{x}(t)\|^2.  \nonumber
\end{align}
Adding \eqref{forf} and \eqref{forh} we obtain for almost every $t \in [0,+\infty)$
\begin{equation}\label{forfh}
\begin{array}{rl}
(f+h)(\dot{x}(t)+x(t))-(f+h)(\ol x) & \le \\
\< c A^*A(\dot{x}(t)+x(t))+M_1(t)\dot{x}(t)-c A^*z(t)+A^*y(t),\ol x-(\dot{x}(t)+x(t))\>+\frac{L_h}{2}\|\dot{x}(t)\|^2. &
\end{array}
\end{equation}
We recall the following four identities from the proof of Theorem \ref{convergence} (here were actually replace $x^*$ with $\ol x$ and $y^*$ by $0$)
\begin{align*}
& \<- A^*y(t),\dot{x}(t)+x(t)-\ol x\>+\<y(t),\dot{z}(t)+z(t)-A\ol x\>\\
= & -\<y(t),A(\dot{x}(t)+x(t))-A\ol x-(\dot{z}(t)+z(t))+A \ol x\> = \frac{1}{c}\<y(t),\dot{y}(t)\>\\
= & -\frac{1}{2c}\frac{d}{dt}\|y(t)\|^2,
\end{align*}
which corresponds to \eqref{e3},
\begin{align*}
& \<-c A^*A(\dot{x}(t)+x(t))+c A^*z(t),\dot{x}(t)+x(t)-\ol x\> \nonumber\\
& +\<-c(\dot{z}(t)+z(t)) +cAx(t)+c\g A\dot{x}(t),\dot{z}(t)+z(t)-A\ol x\> \nonumber\\
= & \ -\frac{\g+1}{3c}\|\dot{y}(t)\|^2-\frac{c}{4}\|\dot{z}(t)\|^2-\frac{(1-\g)c}{4}\|A\dot{x}(t)\|^2 -\left\|\frac{\sqrt{3c}}{2}\dot{z}(t)+\frac{1}{\sqrt{3c}}\dot{y}(t)\right\|^2\\
& -(1-\g)\left\|\frac{\sqrt{3c}}{2}A\dot{x}(t)-\frac{1}{\sqrt{3c}}\dot{y}(t)\right\|^2 +\frac{c(\g-1)}{2}\frac{d}{dt}\left(\|Ax(t)-A\ol x\|^2\right)-\frac{c}{2}\frac{d}{dt}\left(\|z(t)-A\ol x\|^2\right), \nonumber
\end{align*}
which corresponds to \eqref{E1}, and
\begin{align*}
\<-M_1(t)\dot{x}(t),\dot{x}(t)+x(t)-\ol x\> = & -\|\dot{x}(t)\|_{M_1(t)}^2-\<M_1(t)\dot{x}(t),x(t)-\ol x\>\\
=&  -\|\dot{x}(t)\|_{M_1(t)}^2+\frac12 \<\dot M_1(t)(x(t)-\ol x), x(t)-\ol x\>\\
& -\frac12\frac{d}{dt}\|x(t)-\ol x\|_{M_1(t)}^2
\end{align*}
and
\begin{align*}
\<-M_2(t)\dot{z}(t),\dot{z}(t)+z(t)-A\ol x\>= & -\|\dot{z}(t)\|_{M_2(t)}^2-\<M_2(t)\dot{z}(t),z(t)-A\ol x\>\\
= &-\|\dot{z}(t)\|_{M_2(t)}^2+\frac12 \<\dot M_2(t)(z(t)-A\ol x), z(t)-A\ol x\> \\
&-\frac12\frac{d}{dt}\|z(t)-A\ol x\|_{M_2(t)}^2,
\end{align*}
which all hold for for almost every $t \in [0,+\infty)$. By adding the four identities,  \eqref{forfh} and \eqref{forg}, we obtain for almost every $t \in [0,+\infty)$
\begin{align*}
\Big((f+h)(\dot{x}(t)+x(t))+g(\dot{z}(t)+z(t))\Big)-\Big((f+h)(\ol x)+g(A\ol x)\Big) & \le\\
-\frac{\g+1}{3c}\|\dot{y}(t)\|^2-\frac{c}{4}\|\dot{z}(t)\|^2-\frac{(1-\g)c}{4}\|A\dot{x}(t)\|^2&\\
-\left\|\frac{\sqrt{3c}}{2}\dot{z}(t)+\frac{1}{\sqrt{3c}}\dot{y}(t)\right\|^2 -(1-\g)\left\|\frac{\sqrt{3c}}{2}A\dot{x}(t)-\frac{1}{\sqrt{3c}}\dot{y}(t)\right\|^2&\\
-\frac{c(1-\g)}{2}\frac{d}{dt}\left(\|Ax(t)-A\ol x\|^2\right)-\frac{c}{2}\frac{d}{dt}\left(\|z(t)-A\ol x\|^2\right) + \frac{L_h}{2}\|\dot{x}(t)\|^2-\frac{1}{2c}\frac{d}{dt}\|y(t)\|^2&\\
-\|\dot{x}(t)\|_{M_1(t)}^2+\frac12 \<\dot M_1(t)(x(t)-\ol x), x(t)-\ol x\>-\frac12\frac{d}{dt}\|x(t)-\ol x\|_{M_1(t)}^2&\\
-\|\dot{z}(t)\|_{M_2(t)}^2+\frac12 \<\dot M_2(t)(z(t)-A\ol x), z(t)-A\ol x\>-\frac12\frac{d}{dt}\|z(t)-A\ol x\|_{M_2(t)}^2.&
\end{align*}
By neglecting the negative terms (here we use also that $M_1(t)+\frac{c(1-\g)}{4}A^*A-\frac{L_h}{2}I\in S_+(\mathcal{H})$), we obtain for almost every $t\in[0,+\infty)$ 
\begin{align}\label{forfghuse}
\Big((f+h)(\dot{x}(t)+x(t))+g(\dot{z}(t)+z(t))\Big)-\Big((f+h)(\ol x)+g(A\ol x)\Big) & \le \nonumber\\
-\frac12\frac{d}{dt}\left(\|x(t)-\ol x\|^2_{M_1(t)+c(1-\g)A^*A}+\|z(t)-A\ol x\|^2_{M_2(t)+cI}+\frac{1}{c}\|y(t)\|^2\right) & = \\
-\frac12\frac{d}{dt}\|(x(t),z(t),y(t))-(\ol x,A\ol x,0)\|^2_{W(t)}, \nonumber
\end{align}
where
$$W(t)=\left(M_1(t)+c(1-\g)A^*A,M_2(t)+cI,\frac{1}{c}I\right).$$
For $\tilde{x}(t)=\frac{1}{t}\int_0^t (\dot{x}(s)+x(s))ds$
and
$\tilde{z}(t)=\frac{1}{t}\int_0^t(\dot{z}(s)+z(s))ds,$
it holds
$$A\tilde{x}(t)-\tilde{z}(t)=\frac{1}{t}\int_0^t A(\dot{x}(s)+x(s))-(\dot{z}(s)+z(s))ds=\frac{1}{ct}\int_0^t \dot{y}(s) ds=\frac{y(t)-y_0}{ct} \ \forall t \in (0,+\infty).$$
From Theorem \ref{convergence} it follows that the trajectory $(x(t),z(t),y(t)) \to (x_{\infty},z_{\infty},y_{\infty}), t \in [0,+\infty)$, converges weakly to a saddle point of $l$ as $t \rightarrow +\infty$. This means that $y(t), t \in [0,+\infty),$ it is bounded, thus
there exists $K\ge 0$ such that
$$\|A\tilde{x}(t)-\tilde{z}(t)\|\le \frac{K}{t} \quad \forall t \in (0,+\infty).$$
Let $t \in (0, +\infty)$ be such that $(\tilde{x}(t),\tilde{z}(t))\in\dom f\times \dom g$. By Jensen's inequality in the integral form we have for every $t \in (0, +\infty)$
$$(f+h)(\tilde{x}(t))=(f+h)\left(\frac{1}{t}\int_0^t (\dot{x}(s)+x(s))ds\right)\le \frac{1}{t}\int_0^t (f+h)(\dot{x}(s)+x(s))ds$$
and
$$g(\tilde{z}(t))=g\left(\frac{1}{t}\int_0^t (\dot{z}(s)+z(s))ds\right)\le \frac{1}{t}\int_0^t g(\dot{z}(s)+z(s))ds,$$
which, combined with \eqref{forfghuse}, yields
\begin{align*}
(f+h)(\tilde{x}(t))+g(\tilde{z}(t)) & \le \\
\frac{1}{t}\int_0^t \Big ((f+h)(\dot{x}(s)+x(s))+g(\dot{z}(s)+z(s)) \Big)ds & \le \\
\frac{1}{t}\int_0^t \left(\Big((f+h)(\ol x)+g(A\ol x)\Big)-\frac12\frac{d}{ds}\|(x(s),z(s),y(s))-(\ol x,A\ol x,0)\|^2_{W(s)}\right)ds & = \\
(f+h)(\ol x)+g(A\ol x)- \!\frac{1}{2t}\!\left(\!\|(x(t),z(t),y(t))-\!(\ol x,A\ol x,0)\|^2_{W(t)}\!-\|(x(0),z(0),y(0))-\!(\ol x,A\ol x,0)\|^2_{W(0)}\!\right) & \le \\
(f+h)(\ol x)+g(A\ol x)+\frac{\|(x^0,z^0,y^0)-(\ol x,A\ol x,0)\|^2_{W(0)}}{2t}.&
\end{align*}
Hence,
\begin{equation*}
\Big((f+h)(\tilde{x}(t))+g(\tilde{z}(t))\Big)-\Big ((f+h)(\ol x)+g(A\ol x) \Big)\le \frac{\|(x^0,z^0,y^0)-(\ol x,A\ol x,0)\|^2_{W(0)}}{2t}.
\end{equation*}

\end{proof}

\end{document}